\newif\ifdraft
\newcommand{\jp}[1]{{\ifdraft \color{blue} \fi #1}}
\newcommand{\rn}{{\mathbb{R}^n}}
\newcommand{\pp}{\partial}
\newcommand{\rl}{\partial_t^\alpha}
\newcommand{\ppa}{\leftidx{^C}\partial_t^\alpha}
\newcommand{\w}{\omega}
\newcommand{\W}{\Omega}
\newcommand{\eps}{\varepsilon}
\newcommand{\dtd}{\overline{\partial_{\tau}}}
\newcommand\restr[2]{{
  \left.\kern-\nulldelimiterspace 
#1 
  \vphantom{\big|} 
  \right|_{#2} 
  }}
\newcommand{\subf}[2]{%
  {\small\begin{tabular}[t]{@{}c@{}}
  #1\\#2
  \end{tabular}}%
}
\def\R{{\mathbb {R}}}
\def\N{{\mathbb {N}}}
\newtheorem{theorem}{Theorem}[section]
\newtheorem{lemma}[theorem]{Lemma}
\newtheorem{proposition}[theorem]{Proposition}
\theoremstyle{remark}
\newtheorem{remark}[theorem]{Remark}
\theoremstyle{definition}
\numberwithin{equation}{section}
\title[FE approximations for fractional evolution problems]{Finite Element approximations for fractional evolution problems}
\author[G. Acosta, F.M. Bersetche and J.P. Borthagaray]{Gabriel Acosta, Francisco M. Bersetche and Juan Pablo Borthagaray}
\address[G. Acosta, F.M. Bersetche and J.P. Borthagaray]{IMAS - CONICET and Departamento de Matem\'a\-tica, FCEyN - Universidad de Buenos Aires, Ciudad Universitaria, Pabell\'on I  (1428) Buenos Aires, Argentina.}
\email[G. Acosta]{gacosta@dm.uba.ar}
\urladdr[G. Acosta]{http://mate.dm.uba.ar/~gacosta/}
\email[F. M. Bersetche]{fbersetche@dm.uba.ar}
\email[J.P. Borthagaray]{jpbortha@dm.uba.ar}
\subjclass[2010]{65R20, 65M60, 35R11}
\keywords{Fractional Laplacian, Caputo derivative, evolution problems}
\begin{document}
\begin{abstract}
This work introduces and analyzes a finite element scheme for evolution problems involving fractional-in-time and in-space 
differentiation operators up to order two. 
The left-sided fractional-order derivative in time we consider is employed to represent memory effects, while a nonlocal 
differentiation operator in space accounts for long-range dispersion processes.
We discuss well-posedness and obtain regularity estimates for the evolution problems under consideration. 
The discrete scheme we develop is based on piecewise linear elements for the space variable and a convolution quadrature for the time component. We illustrate the method's performance with numerical experiments in one- and two-dimensional domains.
\end{abstract}

\maketitle

\section{Introduction}

Since the introduction of Continuous Time Random Walks (CTRW) by Montroll and Weiss \cite{MontrollWeiss}, anomalous diffusion phenomena has been an active area of research among the scientific community. 
The CTRW assign a joint space-time distribution to individual particle motions: when the tails of these distributions are heavy enough, non-Fickian dispersion results for all time and space scales. A heavy-tailed jump (waiting time) distribution implies the absence of a characteristic space (time) scale.

The equivalence between these heavy-tailed motions and transport equations that use fractional-order derivatives has been shown by several authors; see, for example \cite{Hanert}.  Space nonlocality is a direct consequence of the existence of arbitrarily large jumps in space, whereas time nonlocality is due to the history dependence introduced in the dynamics by the presence of anomalously large waiting times.

The evidence of anomalous diffusion phenomena has been thoroughly reported in physical and social environments, such as plasma turbulence \cite{delCastillo, delCastillo1}, hidrology \cite{BensonWheatcraft, Berkowitz, Pachepsky}, finance \cite{Mainardi}, and human travel \cite{Brockmann} and predator search \cite{Sims} patterns.
Models of transport dynamics in complex systems taking into account this non-Fickian behavior have been proposed accordingly. Also, evolution processes intermediate between diffusion and wave propagation have been shown to govern the propagation of stress waves in viscoelastic materials \cite{Freed, MainardiParadisi}.

Integer-order differentiation operators are local, because the derivative of a function at a given point depends only on the values of the function in a neighborhood of it. In contrast, fractional-order derivatives are nonlocal, integro-differential operators. A left-sided fractional-order derivative in time may be employed to represent memory effects, while a nonlocal differentiaton operator in space accounts for long-range dispersion processes.

We now describe the problems we are going to consider in this work.  Let $\W \subset \rn$ be a domain with smooth enough boundary,  $\alpha \in (0,2]$, $s \in (0,1)$ and a forcing term $f\colon \W \times (0,T) \to \R$. We aim to solve the fractional differential equation 
\begin{equation}
\label{eq:parabolic}
      \ppa u +  (-\Delta)^s u = f  \mbox{ in }\W\times(0,T).
\end{equation}
Here,  $\ppa$ denotes the Caputo derivative, given by
\begin{equation*} \label{eq:caputo}
\ppa u (x,t) =
\left\lbrace
  \begin{array}{rl}
	 \frac{1}{\Gamma(k-\alpha)} \int_0^t \frac{1}{(t-r)^{\alpha-k+1}} \frac{\pp^k u}{\pp r^k} (x,r) \, dr & \mbox{ if } k-1 < \alpha < k, \ k \in \N, \\
	\frac{\pp^k u}{\pp t^k} u(x,t) &  \mbox{ if } \alpha = k \in \N,
      \end{array}
    \right.
    \end{equation*}
while $(-\Delta)^s$ is the fractional Laplace operator, defined as
\begin{equation}
(-\Delta)^s u (x) = C(n,s) \mbox{ P.V.} \int_\rn \frac{u(x)-u(y)}{|x-y|^{n+2s}} \, dy.
\label{eq:fraccionario}
\end{equation}
Above, $ C(n,s) = \frac{2^{2s} s \Gamma(s+\frac{n}{2})}{\pi^{n/2} \Gamma(1-s)} $ is a normalization constant.

Closely related to the Caputo derivative, the Riemann-Liouville fractional derivative is needed in the sequel.
Let us recall here its definition,
 \begin{equation*} \label{eq:RL}
\rl u (x,t) =
\left\lbrace
  \begin{array}{rl}
	 \frac{1}{\Gamma(k-\alpha)}  \frac{\pp^k }{\pp t^k} \int_0^t \frac{1}{(t-r)^{\alpha-k+1}} u (x,r) \, dr & \mbox{ if } k-1 < \alpha < k, \ k \in \N, \\
	\frac{\pp^k u}{\pp t^k} u(x,t) &  \mbox{ if } \alpha = k \in \N.
      \end{array}
    \right.
    \end{equation*}
   
For $0<\alpha\le 1$, problem \eqref{eq:parabolic} is usually called a \emph{fractional diffusion} equation. On the
other hand, for  $1<\alpha\le2$  it is sometimes called a \emph{fractional diffusion-wave} equation.
Analyzing scaling and similarity properties of the Green function $G_{\alpha,s}$ associated to the 
operator $\ppa + (-\Delta)^s$, in \cite{Mainardi_fundamental} it is shown that 
$$G_{\alpha,s}(x,t) = t^{\frac{-\alpha}{2s}} \Phi_{\alpha,s} \left( \frac{x}{t^{\frac{\alpha}{2s}}}\right), $$
for a certain one-variable function $\Phi_{\alpha,s}$.
Notice that in case $\alpha = s$, although the CTRW associated to equation \eqref{eq:parabolic} has 
the same scaling properties as Brownian motion, the lack of finite moments  makes the diffusion process anomalous.
On the other hand, the term \emph{fractional wave} equation has been utilized to refer to the problem with $1<\alpha=2s<2$,  since for this choice of the parameters some features of the standard wave equation are preserved \cite{Luchko}. \jp{For example, the maximum, gravity center and mass center of the fundamental solution such possess constant propagation velocity.}
%such as having constant propagation velocity of both the maximum of its fundamental solution and its gravity and mass centers

\jp{Let $v, b \in L^2(\W)$ be given data. We complement problem \eqref{eq:parabolic} with} the initial and boundary value conditions
\begin{equation} \label{eq:boundary}
 \left\lbrace
  \begin{array}{rl}
  	u =  0 & \mbox{ in }\W^c \times [0,T) , \\
	u(\cdot, 0) = v & \mbox{ in }\W ,
      \end{array}
    \right.
\end{equation}
and, additionally for $1<\alpha\le 2$, we require that 
\begin{equation} \label{eq:boundary_wave}
      \pp_t u (\cdot, 0)=  b  \mbox{ in }\W .
\end{equation}

A variational formulation for the time-fractional problem involving a Caputo derivative of order $\alpha \in (0,1)$ has been recently studied by Karkulik \cite{Karkulik}. In that work, the author shows  that the Caputo derivative is a linear and bounded operator on a time-fractional Sobolev-Bochner space, considers a variational formulation  based exclusively on Sobolev regularity and proves that if the initial condition belongs to $H^{1-1/\alpha + \eps}(\W)$ for some $\eps > 0$, then the time-fractional problem is well posed.

As fractional-order differential operators involve singular kernels, numerical approximations of equations involving them is a delicate task. Moreover, their nonlocal character calls for the design of efficient numerical schemes for the discretization.  Several strategies have been shown to succeed in the one-dimensional context. For example, finite element schemes were treated in \cite{FixRoop}; finite difference methods have been employed to discretize either the fractional-order space diffusion term \cite{Meerschaert, Meerschaert1} and time derivative \cite{LinXu}; pseudospectral methods were considered in \cite{Hanert}.
A comparison between these approaches may be found in \cite{HanertPiret}. Also, a convolution quadrature rule has been proposed in \cite{Lub2,Lub1}, and in turn, has been utilized for the approximation of fractional time derivatives in  \cite{Jin}.

Nevertheless, analysis and implementation of two-dimensional schemes is scarce. In \cite{AcostaBorthagaray}, the Dirichlet problem for the fractional Laplace operator was analyzed and a simple finite element implementation for the two-dimensional problem was introduced. Later on, the authors presented the code employed for such implementation in \cite{ABB}. The space discretization in the current work is based on such a code. \jp{Recently, implementations based on finite elements \cite{AG2, AG} and integral representation formulas \cite{BLP} have been proposed as well.}

It is noteworthy that the fractional Laplace operator defined by \eqref{eq:fraccionario} does not coincide with the operator considered, for example, in \cite{BonitoLeiPasciak, Nochetto_parabolico, Yang}. Indeed, the spatial operator considered in those works is a power of the Laplacian in the spectral sense. 

Our work does not include the case $s=1$, which corresponds to a local-in-space process, as it is already covered by other authors' work. For the range $0<\alpha\le 1$, reference \cite{JinLazarovZhou} develops a semidiscrete Galerkin method and studies the error both for smooth and non-smooth data. Naturally, the local-in-space case is also covered by the previously mentioned works \cite{BonitoLeiPasciak, Nochetto_parabolico, Yang} regarding spectral fractional powers of the Laplacian.  For the full range of time derivatives we are considering in this work, \cite{McLeanThomee} deals with an alternative formulation of \eqref{eq:parabolic} and a method based on the Laplace transform is developed, while in \cite{MustaphaMcLean} an approach via discontinuous Galerkin discretization in time is introduced.

\subsection*{Organization of the paper} Preliminary concepts regarding fractional Sobolev spaces, elliptic regularity for the fractional Laplacian and the Mittag-Leffler function are discussed in Section \ref{sec:preliminaries}. Moreover, that section also deals with the well-posedness of \eqref{eq:parabolic} with conditions \eqref{eq:boundary}, \eqref{eq:boundary_wave}. Afterwards, in Section \ref{sec:regularity} we take advantage of the representation of solutions to derive regularity estimates for the fractional evolution problems under consideration. A numerical scheme, based on standard Galerkin finite element approximations for the space variable and a convolution quadrature for the time component, is proposed 
%and analyzed 
in Section \ref{sec:discretization}, and
error bounds for this scheme are presented in Section \ref{sec:error}. In Section \ref{sec:numerical} we present some numerical examples that illustrate the accuracy of our convergence estimates \jp{and qualitative behavior of solutions to fractional evolution problems. Finally, Appendices \ref{sec:CQ} and \ref{sec:Err} include details on the convolution quadrature rule utilized for the time discretization and on the error analysis of the semi-discrete scheme, respectively.
}

%%%%%%%%%%%%%%%%
\section{Preliminaries} \label{sec:preliminaries}
In this section we set the basic notation and present some preliminary concepts necessary for the analysis of the fractional evolution problems under consideration. We recall elliptic regularity results for the fractional Laplacian and some important properties of the Mittag-Leffler function. These properties are then utilized to derive a representation formula for solutions that allows to prove the well-posedness of problem  \eqref{eq:parabolic}.

\subsection{Sobolev spaces and fractional Laplace operator}
Let $\W \subset \rn$ be an open set and $s \in(0,1)$. The fractional Sobolev space $H^s(\W)$ is defined by
\[
H^s(\W) = \left\{ w \in L^2(\W) \colon |w|_{H^s(\W)} := \left( \iint_{\W^2}  \frac{|w(x)-w(y)|^2}{|x-y|^{n+2s}} \, dx \, dy \right)^{\frac12} < \infty \right\}.
\]
This set, furnished with the norm $\|\cdot\|_{H^s(\W)} = \|\cdot\|_{L^2(\W)} + |\cdot|_{H^s(\W)} ,$ constitutes a Hilbert space. 
If $s>1$ and it is not an integer, considering the decomposition $s = m + \sigma$, where $m \in \mathbb{N}$ and $\sigma \in (0,1)$, 
allows to define $H^s(\W)$ by means of
\[
H^s(\W) = \left\{ w \in H^m(\W) \colon |D^\alpha w|_{H^{\sigma}(\W)} < \infty \text{ for all } \alpha \text{ s.t }  |\alpha| = m \right\}.
\]
 
Another important space of interest for the problems under consideration is that of functions in $H^s(\rn)$ supported within $\overline \W$, 
\[
\widetilde{H}^s (\W) = \left\{ w \in H^s(\rn) \colon \text{ supp } w \subset \bar{\W} \right\}.
\]	
The bilinear form 
\begin{equation} \label{eq:pint}
\langle u , w \rangle_s := C(n,s) \iint_{(\rn \times \rn) \setminus (\W^c \times \W^c)} \frac{(u(x)-u(y)) (w(x)-w(y))}{|x-y|^{n+2s}} \, dy dx 
\end{equation}
constitutes an inner product on $\widetilde{H}^s(\W)$. The norm it induces, which is just a multiple of the $H^s(\rn)$-seminorm, is equivalent to the full $H^s(\rn)$-norm on this space, because a Poincar\'e-type inequality holds in it. See, for example, \cite{AcostaBorthagaray} for details.

\begin{remark}
We emphasize that functions in $\widetilde{H}^s(\W)$ are defined in the whole space $\rn$. In particular, this allows to consider the action of the fractional Laplacian over this set.
\end{remark}

Finally, Sobolev spaces of negative order are defined by duality, using $L^2(\W)$ as pivot space. Of interest in the problems we are considering is the space
$$
H^{-s}(\W) = \left( \widetilde{H}^s (\W) \right) '.
$$

Recalling definition \eqref{eq:pint} and denoting by $\left( \cdot, \cdot \right)$ the inner product in $L^2(\W)$, the weak formulation of problem \eqref{eq:parabolic} reads: find $u \in L^2(0,T; \widetilde{H}^s(\W))$ such that 
$$ \int_0^T \left( \ppa u (\cdot, t) \, w(\cdot, t) \right) dt +  
\int_0^T \langle u(\cdot, t), w(\cdot, t) \rangle_s \, dt = 
\int_0^T \left( f(\cdot, t), w(\cdot, t) \right) dt  
$$
for all $w \in L^2(0,T; \widetilde{H}^s(\W)).$

\subsection{Elliptic regularity}
We recall regularity results for the homogeneous problem
\begin{equation} \label{eq:homogeneous}
\left\lbrace
  \begin{array}{rl}      
(-\Delta)^s u  = g &   \text{in } \W, \\
u =  0 &  \text{in } \W^c.
\end{array}
    \right.
\end{equation}

Even though the fractional Laplacian is an operator of order $2s$ in $\rn$, in the sense that $(-\Delta)^s \colon H^\ell(\rn) \to H^{\ell - 2s}(\rn)$ is bounded and invertible, theory is much more delicate for problems posed on bounded domains.

Grubb \cite{Grubb} provides regularity estimates for solutions of \eqref{eq:homogeneous} in the setting of H\"ormander $\mu-$spaces. We express these results in terms of standard Sobolev spaces, and refer to \cite{Grubb} for further details.
\begin{proposition}\label{prop:regHr}
Let $\W \subset \rn$ be a bounded domain with smooth boundary, $g \in H^r(\Omega)$ for some $r\geq -s$ and consider $u\in \widetilde{H}^s(\W)$, the solution of the Dirichlet problem \eqref{eq:homogeneous}. Then, there exists a constant $C(n,s)$ such that 
$$
|u|_{H^{s+\gamma}(\rn)} \leq C \| g \|_{H^r(\W)}.
$$
Here, $\gamma = \min \{ s, 1/2 - \eps\}$, with $\eps > 0$ arbitrary small. 
\end{proposition}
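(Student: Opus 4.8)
The plan is to extract the estimate from Grubb's pseudodifferential regularity theory and then translate its conclusion, which is naturally expressed in the scale of H\"ormander $\mu$-transmission spaces, back into the standard Sobolev scale. The structural input that unlocks this machinery is that $(-\Delta)^s$ is a classical, elliptic, self-adjoint pseudodifferential operator of order $2s$ on $\rn$ satisfying H\"ormander's $s$-transmission condition across $\pp\W$; the smoothness of $\pp\W$ guarantees that the symbol calculus applies up to the boundary. I would begin by recording this identification, so that the Dirichlet problem \eqref{eq:homogeneous} falls within the hypotheses of Grubb's results.

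Next I would invoke Grubb's regularity theorem for the exterior Dirichlet problem: for a datum of Sobolev regularity $r$ lying above the solvability threshold $-s$, the solution $u$ of \eqref{eq:homogeneous} belongs to the $s$-transmission space $H^{s(r+2s)}(\overline{\W})$, accompanied by the a priori bound $\| u \|_{H^{s(r+2s)}} \le C \| g \|_{H^r(\W)}$. The index $r+2s$ reflects the order of the operator: away from $\pp\W$ the full $2s$ derivatives are gained, so that interior regularity is governed by nothing more than the datum and the classical elliptic estimate.

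The decisive and genuinely delicate step is to convert membership in $H^{s(r+2s)}(\overline{\W})$ into a plain statement in $H^{s+\gamma}(\rn)$. Here the essential structural feature is the boundary behaviour of elements of the $\mu$-transmission space: near $\pp\W$ they factor as $d(x)^s$ times a function that is smooth up to the boundary in accordance with the datum, where $d$ denotes the distance to $\pp\W$. No obstruction arises in the interior, but the factor $d^s$ has Sobolev regularity that saturates at $H^{s+1/2-\eps}(\rn)$ for every $\eps>0$ while never attaining $H^{s+1/2}(\rn)$. Balancing the interior gain of order $2s$ against this boundary ceiling of order $s+1/2-\eps$ yields exactly $u \in H^{s+\gamma}(\rn)$ with $\gamma = \min\{s,\ 1/2-\eps\}$, and carries the $\mu$-space bound over to the seminorm $|u|_{H^{s+\gamma}(\rn)}$. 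I expect this translation to be the main obstacle: one must pin down the sharp exponent produced by the $d^s$ edge singularity and verify that, once the datum is sufficiently regular, additional smoothness of $g$ no longer lifts the global Sobolev order past $s+1/2-\eps$. By contrast, the identification of $(-\Delta)^s$ as an $s$-transmission operator and the interior estimate are comparatively routine, standard consequences of the pseudodifferential calculus.
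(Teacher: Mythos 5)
Your proposal is correct and follows essentially the same route as the paper, which offers no proof of its own beyond citing Grubb's $\mu$-transmission theory and translating the conclusion (solution in $H^{s(r+2s)}(\overline{\W})$, hence $d^s$ boundary behaviour capping the global regularity at $H^{s+1/2-\eps}(\rn)$) into the standard Sobolev scale. The one caveat --- that the exponent $\gamma=\min\{s,\,1/2-\eps\}$ presumes the full interior gain $r+2s\ge 2s$, i.e.\ effectively $r\ge 0$, whereas for $-s\le r<0$ Grubb's theorem only yields order $\min\{r+2s,\,s+1/2-\eps\}$ --- is inherited from the proposition's own formulation rather than introduced by your argument.
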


\begin{remark} Observe that, in general, it is not true that solutions of \eqref{eq:homogeneous} have $2s$ more derivatives than the right-hand side function $g$.
No matter how regular $g$ is, the solution of \eqref{eq:homogeneous} is not expected to be more regular than $H^{s+\gamma}(\rn)$. 
In spite of this, the singular behavior of solutions can be localized at the boundary and described more appropriately
in weighted spaces \cite{AcostaBorthagaray,ABBM}.
\end{remark}

\begin{remark} \label{remark:mapping}
In view of Proposition \ref{prop:regHr}, given $v \in \widetilde{H}^s(\W)$, assuming that it satisfies $(-\Delta)^s v \in L^2(\W)$ is weaker than assuming that $v \in H^{2s}(\rn)$. Hypotheses of this type on the initial/boundary conditions are employed throughout this work.
\end{remark}

\subsection{Mittag-Leffler function} Let $\alpha > 0$ and $\mu \in \R$, then the Mittag-Leffler function  $E_{\alpha,\mu} \colon \mathbb{C} \to \mathbb{C}$ is defined by
\begin{equation*}
E_{\alpha,\mu}(z) = \sum_{k=0}^\infty \frac{z^k}{\Gamma(\alpha k + \mu)}.
\label{eq:mittag-leffler}
\end{equation*}

This is a complex function that depends on two parameters; in particular, it generalizes the exponentials, in view of the identity $E_{1,1}(z) = e^z$ for all $z \in \mathbb{C}$.
The following properties of this family of functions are useful to derive the regularity estimates we present in Section \ref{sec:regularity}.

\begin{lemma}[cf. {\cite[Theorem 4.3]{Diethelm}}] \label{lemma:der_mittag}
If $\alpha, \lambda > 0$, then 
\begin{equation*}
\ppa E_{\alpha, 1} (-\lambda t^\alpha ) = - \lambda E_{\alpha, 1} (-\lambda t^\alpha ) .
\label{eq:der_mittag}
\end{equation*}
Moreover, the following identities hold for integer-order differentiation:
\begin{equation*} 
\frac{d}{dt} E_{\alpha,1}(-\lambda t^\alpha) = -\lambda t^{\alpha-1} E_{\alpha,\alpha}(-\lambda t^\alpha) ,
\end{equation*}
and
\begin{equation*} \label{eq:der_mittag2}
\frac{d}{dt} \left(t E_{\alpha,2}(-\lambda t^\alpha) \right) = E_{\alpha,1}(-\lambda t^\alpha) .
\end{equation*}
\end{lemma}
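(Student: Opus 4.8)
The plan is to work directly from the power-series definition of the Mittag-Leffler function and to differentiate (or fractionally differentiate) term by term. Since $E_{\alpha,\mu}$ is an entire function, the series $E_{\alpha,1}(-\lambda t^\alpha) = \sum_{k\geq 0} \frac{(-\lambda)^k t^{\alpha k}}{\Gamma(\alpha k+1)}$ converges uniformly on compact subsets of $[0,\infty)$, and the same holds after termwise differentiation; this is what legitimizes interchanging the differentiation operators with the infinite sum.

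For the two integer-order identities I would differentiate the series termwise. For the first, differentiating $E_{\alpha,1}(-\lambda t^\alpha)$ annihilates the constant $k=0$ term and produces $\sum_{k\geq 1} \frac{(-\lambda)^k \alpha k\, t^{\alpha k-1}}{\Gamma(\alpha k+1)}$; using $\Gamma(\alpha k+1)=\alpha k\,\Gamma(\alpha k)$ and reindexing $j=k-1$ yields exactly $-\lambda t^{\alpha-1} E_{\alpha,\alpha}(-\lambda t^\alpha)$. For the second, I would expand $t E_{\alpha,2}(-\lambda t^\alpha) = \sum_{k\geq 0} \frac{(-\lambda)^k t^{\alpha k+1}}{\Gamma(\alpha k+2)}$, differentiate to obtain $\sum_{k\geq 0} \frac{(-\lambda)^k(\alpha k+1) t^{\alpha k}}{\Gamma(\alpha k+2)}$, and simplify via $\Gamma(\alpha k+2)=(\alpha k+1)\Gamma(\alpha k+1)$ to recover $E_{\alpha,1}(-\lambda t^\alpha)$.

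For the Caputo identity, the key ingredient is the value of $\ppa$ on monomials: for $\beta>\lceil\alpha\rceil-1$ one has $\ppa t^\beta = \frac{\Gamma(\beta+1)}{\Gamma(\beta-\alpha+1)} t^{\beta-\alpha}$, while $\ppa$ annihilates constants. Applying this to each term $t^{\alpha k}$—the $k=0$ term vanishing, and $\alpha k \geq \alpha > \lceil\alpha\rceil-1$ for every $k\geq 1$—the factor $\Gamma(\alpha k+1)$ cancels and one is left with $\sum_{k\geq 1} \frac{(-\lambda)^k t^{\alpha(k-1)}}{\Gamma(\alpha(k-1)+1)}$; a final reindexing gives $-\lambda E_{\alpha,1}(-\lambda t^\alpha)$.

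The main obstacle is making the termwise application of the Caputo operator rigorous, since $\ppa$ is a nonlocal integro-differential operator rather than an ordinary derivative. This requires verifying that the fractional integral defining $\ppa$ commutes with the infinite sum—again a consequence of the uniform convergence of the series and its relevant derivatives on compact time intervals—and that the monomial formula is valid for the possibly non-integer exponents $\alpha k$ appearing in the expansion. Alternatively, one may invoke the cited result \cite{Diethelm} directly for the Caputo identity and reserve the elementary termwise argument for the two integer-order formulas.
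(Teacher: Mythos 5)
Your computations are correct, and all three identities do follow from termwise manipulation of the power series exactly as you describe: the two integer-order formulas reduce to the functional equation $\Gamma(z+1)=z\Gamma(z)$ after reindexing, and the Caputo identity reduces to the monomial rule $\ppa t^{\beta} = \frac{\Gamma(\beta+1)}{\Gamma(\beta-\alpha+1)}\, t^{\beta-\alpha}$ together with the fact that $\ppa$ annihilates constants. Note, however, that the paper does not prove this lemma at all: it is stated as a recalled result with a pointer to Theorem~4.3 of Diethelm's book, so your argument is a self-contained alternative rather than a reconstruction of the paper's proof. The comparison is therefore simple. Citing the reference outsources the one genuinely delicate point, which you correctly identify: justifying the interchange of $\ppa$ with the infinite sum. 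Uniform convergence on compact subsets of $[0,\infty)$ is not quite sufficient on its own, because for $\lceil\alpha\rceil-1<\alpha<\lceil\alpha\rceil$ the termwise derivatives $\frac{d^{\lceil\alpha\rceil}}{dt^{\lceil\alpha\rceil}} t^{\alpha k}$ are singular at $t=0$ for small $k$; one should either verify dominated convergence of the partial sums inside the fractional integral (the singularities $r^{\alpha k-\lceil\alpha\rceil}$ are integrable since $\alpha k>\lceil\alpha\rceil-1$, and the tail is controlled by the rapid decay of $1/\Gamma(\alpha k+1)$), or argue on $[\eps,T]$ and pass to the limit. With that point filled in, your elementary argument is complete and arguably more transparent than the citation, since it makes visible exactly why the eigenvalue $-\lambda$ appears.
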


\begin{lemma}[cf. {\cite[Theorem 1.6]{Podlubny}}] \label{lemma:bound_mittag}
 Let $0< \alpha<2$ and $\mu \in \R$ satisfying $\pi \alpha / 2 < \delta < \min \{ \pi, \pi\gamma\},$ then there exists a constant  $C(\alpha,\mu,\delta) > 0$ such that
\begin{equation*}
| E_{\alpha, \mu} (z)| \le \frac{C}{1 + |z|} \quad \mbox{for } \delta < | \arg(z)| \leq \pi.
\label{eq:est_mittag}
\end{equation*}
\end{lemma}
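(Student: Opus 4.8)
The function $E_{\alpha,\mu}$ is entire, so on any disk $\{|z|\le R\}$ it is bounded and the asserted inequality holds trivially there; hence it suffices to prove a decay bound of the form $|E_{\alpha,\mu}(z)|\le C/|z|$ for $|z|$ large and $\delta<|\arg z|\le\pi$, and then glue the two regimes using the elementary comparison of $1/|z|$ with $1/(1+|z|)$ on $|z|\ge 1$. The plan is to extract this large-$|z|$ decay from a Hankel-type contour representation of $E_{\alpha,\mu}$.

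First I would replace each coefficient $1/\Gamma(\alpha k+\mu)$ in the defining series by the Hankel integral $\frac{1}{\Gamma(w)}=\frac{1}{2\pi i}\int_{\mathrm{Ha}}e^{\zeta}\zeta^{-w}\,d\zeta$ and sum the resulting geometric series in $z$. After the change of variables that turns $\zeta$ into $\zeta^{1/\alpha}$ this yields the representation
\[
E_{\alpha,\mu}(z)=\frac{1}{2\pi\alpha i}\int_{\gamma}\frac{e^{\zeta^{1/\alpha}}\,\zeta^{(1-\mu)/\alpha}}{\zeta-z}\,d\zeta,
\]
where $\gamma=\gamma(\eps;\psi)$ is the contour made of the two rays $\arg\zeta=\pm\psi$ with $|\zeta|\ge\eps$, joined by the arc $|\zeta|=\eps$. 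I would fix the opening angle $\psi$ so that $\pi\alpha/2<\psi<\delta$; this is possible \emph{precisely} because $\pi\alpha/2<\delta$. The lower bound $\psi>\pi\alpha/2$ forces $\cos(\psi/\alpha)<0$, so that $|e^{\zeta^{1/\alpha}}|=e^{|\zeta|^{1/\alpha}\cos(\psi/\alpha)}$ decays exponentially along the rays and the representation converges absolutely.

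Second I would use the upper bound $\psi<\delta$. For $z$ with $\delta<|\arg z|\le\pi$ one has $|\arg z|>\psi$, so $z$ lies in the part of the plane to the left of $\gamma$; no residue at $\zeta=z$ is collected and $E_{\alpha,\mu}(z)$ equals the contour integral verbatim. Writing $\zeta=re^{\pm i\psi}$ on the rays and $z=\rho e^{i\theta}$ with the angular separation bounded below by $\delta-\psi=:\beta>0$, the elementary estimate $|\zeta-z|^2\ge(1-\cos\beta)(|\zeta|^2+|z|^2)$ gives the uniform lower bound $|\zeta-z|\ge c\,(|\zeta|+|z|)\ge c\,|z|$. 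Substituting this together with the exponential ray decay, the ray contribution is controlled by $\frac{1}{c|z|}\int_{\eps}^{\infty}e^{r^{1/\alpha}\cos(\psi/\alpha)}\,r^{(1-\mu)/\alpha}\,dr$, a convergent integral whose value depends only on $\alpha,\mu,\psi$; the arc contribution, with bounded integrand and denominator $\ge c|z|$, is likewise $O(1/|z|)$. Collecting the pieces yields $|E_{\alpha,\mu}(z)|\le C/|z|$ with $C=C(\alpha,\mu,\delta)$, as needed.

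The step I expect to require the most care is the joint bookkeeping in the second paragraph: choosing $\eps$ and $\psi$ consistently, verifying that the distance bound $|\zeta-z|\ge c(|\zeta|+|z|)$ holds uniformly over the \emph{entire} closed sector $\delta\le|\arg z|\le\pi$ (including $z$ near the negative real axis, where one must compare $z$ with both rays $\pm\psi$ and retain the nearer one), and checking that the resulting constant stays finite as $|\arg z|$ decreases to $\delta$, which is exactly where the margin $\beta=\delta-\psi>0$ is used. Once absolute convergence and this geometric separation are secured, the remaining estimates are routine one-dimensional integral bounds.
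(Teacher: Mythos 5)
Your argument is correct and is essentially the proof of the cited result (\cite{Podlubny}, Theorem 1.6): the paper itself offers no proof of this lemma, only the reference, and the Hankel-contour representation with opening angle $\psi$ squeezed into $(\pi\alpha/2,\delta)$, exponential decay from $\cos(\psi/\alpha)<0$, and the uniform separation $|\zeta-z|\ge c|z|$ from the angular margin $\delta-\psi$ is exactly the standard route taken there. The only point worth making explicit is that the upper constraint $\delta<\min\{\pi,\pi\alpha\}$ (the $\pi\gamma$ in the statement is a typo for $\pi\alpha$) is what guarantees $\psi/\alpha\le\pi$, so that $\zeta^{1/\alpha}$ stays on the principal branch along the rays; your choice $\psi<\delta$ handles this implicitly.
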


\subsection{Solution representation}
Let $\{(\phi_k, \lambda_k) \}_{k=1}^\infty$ denote the solutions of the fractional eigenvalue problem
\begin{equation} \label{eq:eigenvalues}
\left\lbrace
  \begin{array}{rl}
      (-\Delta)^s u = \lambda \, u & \mbox{in }\W, \\
      u =  0  \ \  \,& \mbox{in }\W^c . \\
      \end{array}
    \right.
\end{equation}
It is well-known that the fractional Laplacian has a sequence of eigenvalues
$$0 < \lambda_1 < \lambda_2 \le \ldots , \quad \lambda_k \to \infty \mbox{ as } k\to \infty ,
$$
and that the eigenfunctions' set $\{ \phi_k \}_{k = 1}^\infty$ may be taken to  constitute an orthonormal basis of $L^2(\W)$.  

\begin{remark} Unlike the classical Laplacian, eigenfunctions of the fractional Laplacian are in general non-smooth \cite{grubb_autovalores, RosOtonSerra2}. Indeed, considering a smooth function  $d$ that behaves like $\text{dist}(x,\partial \W)$ near to $\pp\W$, all eigenfunctions $\phi_k$ belong to the space $d^sC^{2s(-\eps)}(\W)$ (the $\eps$ is active only if $s=1/2$) and  $\frac{\phi_k}{d^s}$ does not vanish near $\partial\W$. 
The best Sobolev regularity guaranteed for solutions of \eqref{eq:eigenvalues} is $\phi_k \in H^{s + 1/2 - \eps}(\rn)$ for $\eps>0$ (see \cite{BdPM}).

The reduced Sobolev regularity of eigenfunctions precludes the possibility of solutions to equation \eqref{eq:parabolic} being smooth, even for $\alpha = 1$. This is in stark contrast with the case of the classical Laplacian. However, solutions of diffusion equations with memory --local in space but fractional in time-- are known to be less regular than their classical counterparts \cite{McLean}. The effect of fractional differentiation in time is that high-frequency modes are less strongly damped than in classical diffusion, and the time derivatives of the solution are unbounded as $t\to 0$. In the next section we shall show that solutions of \eqref{eq:parabolic} present the same behavior. \jp{See also the numerical experiments in Subsection \ref{ss:qualitative}.}
\end{remark}

We write solutions of \eqref{eq:parabolic} by means of separation of variables,
\begin{equation}
u(x,t) = \sum_{k = 1}^\infty u_k (t) \, \phi_k (x) .
\label{eq:sep_variables}
\end{equation}
Then, for every $k \ge 1$ it must hold that
\begin{equation} \label{eq:modes}
\left\lbrace
  \begin{array}{rl}
      \ppa u_k + \lambda_k u_k = f_k, & \\
      u_k(0) = v_k,  &\\
      u'_k(0) = b_k, & \text{if } \alpha \in (1,2].
      \end{array}
    \right.
\end{equation}
where $f_k = \left(f, \phi_k \right)$, $v_k = \left(v, \phi_k \right),$ and  $b_k = \left(b, \phi_k \right).$ Existence and uniqueness of solutions to \eqref{eq:modes} follow from standard theory for fractional-order differential equations \cite[Theorem 7.2]{Diethelm}.
Moreover, solutions of \eqref{eq:modes} may be represented as the superposition of the respective solution of the problem with forcing term and initial data equal to zero and the solution of the problem with vanishing forcing term. Namely, defining
$$
F_k (t) w = \int_0^t (t-r)^{\alpha-1} E_{\alpha,\alpha}(-\lambda_k (t-r)^\alpha) \, w(r) \, dr , 
$$
the solution of \eqref{eq:modes} may be written as
\begin{align}
u_k(t)  & =   F_k(t)f_k + v_k E_{\alpha, 1} (-\lambda_k t^\alpha) & (0 < \alpha \le 1),   \label{eq:sol_modes1} \\
u_k(t)  & =   F_k(t)f_k + v_k E_{\alpha, 1} (-\lambda_k t^\alpha) + b_k t E_{\alpha, 2} (-\lambda_k t^\alpha) \ & (1 < \alpha \le 2).  \label{eq:sol_modes2}
  \end{align}
For the particular value of $\alpha = 1$ the above expression yields the well-known formula
 $$ u_k (t) = \int_0^t e^{-\lambda_k(t-r)} f_k(r) \, dr \ + v_k e^{-\lambda_k t} ,$$
usually derived by the method of variation of parameters. 
Also, considering $\alpha = 2$, in virtue of the identities $E_{2,1}(z) = \cosh (\sqrt z)$ and
$E_{2,2}(z) = \frac{\sinh (\sqrt z)}{\sqrt z}$, expression \eqref{eq:sol_modes2} becomes
$$ u_k (t) = \frac{1}{\sqrt{\lambda_k}} \int_0^t \sin(\sqrt{\lambda_k}(t-r)) f_k(r) \, dr \ + v_k \cos (\sqrt{\lambda_k} t) + b_k \frac{\sin(\sqrt{\lambda_k} t)}{\sqrt{\lambda_k}} .$$

Summing the solutions for every eigenmode, we obtain the following result.
\begin{theorem} 
Let $\W$ be a bounded, smooth domain, $s \in (0,1)$ and $\alpha \in (0,2]$. Assume that \jp{$f \in  L^\infty(0,T;L^2(\W))$}, $v \in L^2(\W)$ and $b \in L^2(\W)$ are given.
Then, problem \eqref{eq:parabolic}, with boundary conditions \eqref{eq:boundary} (and  
\eqref{eq:boundary_wave} if $\alpha \in (1,2]$) admits a solution $u \in L^2(0,T; \widetilde H^{s}(\W))$ that  can be represented by \eqref{eq:sep_variables}. The modes $u_k$ are given, accordingly, by \eqref{eq:sol_modes1} if $\alpha \in (0,1]$ and  \eqref{eq:sol_modes2} if $\alpha \in (1,2]$.
\end{theorem}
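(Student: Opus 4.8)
The plan is to exhibit the function defined by the separation-of-variables series \eqref{eq:sep_variables}, with modes $u_k$ given by \eqref{eq:sol_modes1}--\eqref{eq:sol_modes2}, as a well-defined element of $L^2(0,T;\widetilde H^s(\W))$ solving the weak formulation of \eqref{eq:parabolic}. Since $\{\phi_k\}_{k\ge1}$ is an orthonormal basis of $L^2(\W)$, projecting the equation onto each eigenfunction decouples the problem into the family of scalar fractional ODEs \eqref{eq:modes}; existence and the explicit representations \eqref{eq:sol_modes1}--\eqref{eq:sol_modes2} of their solutions are already furnished by the cited theory for fractional ODEs, so the genuine work is to control the series and to verify that its sum is a weak solution. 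I would not attempt uniqueness, as the statement only asserts existence.

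First I would reduce the relevant norm to a weighted sum over modes. Because $(-\Delta)^s\phi_k=\lambda_k\phi_k$, the eigenfunctions are orthogonal both in $L^2(\W)$ and with respect to the inner product \eqref{eq:pint}, with $\langle\phi_j,\phi_k\rangle_s=\lambda_j\delta_{jk}$. Hence, for the partial sums $u^N(x,t)=\sum_{k=1}^N u_k(t)\phi_k(x)$,
\[
\|u^N(\cdot,t)\|_{\widetilde H^s(\W)}^2 \simeq \langle u^N(\cdot,t),u^N(\cdot,t)\rangle_s = \sum_{k=1}^N \lambda_k\,|u_k(t)|^2,
\]
so that proving $u\in L^2(0,T;\widetilde H^s(\W))$ amounts to showing that $\sum_{k\ge1}\lambda_k\int_0^T|u_k(t)|^2\,dt$ is finite and that the tails are small, i.e.\ the partial sums are Cauchy in this norm.

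The core is then a set of mode estimates extracted from the Mittag-Leffler bound of Lemma \ref{lemma:bound_mittag}, namely $|E_{\alpha,\mu}(-\lambda_k t^\alpha)|\le C/(1+\lambda_k t^\alpha)$, applied to the three contributions to $u_k$. For the forcing term $F_k(t)f_k$, I would read $F_k$ as convolution in time against the kernel $g_k(\rho)=\rho^{\alpha-1}E_{\alpha,\alpha}(-\lambda_k\rho^\alpha)$, whose $L^1(0,T)$-norm is bounded by $C\lambda_k^{-1}\log(1+\lambda_k T^\alpha)$; Young's convolution inequality then gives $\|F_kf_k\|_{L^2(0,T)}^2\le C\lambda_k^{-2}\log^2(1+\lambda_k T^\alpha)\,\|f_k\|_{L^2(0,T)}^2$, and since $\sum_k\|f_k\|_{L^2(0,T)}^2=\|f\|_{L^2(0,T;L^2(\W))}^2\le T\|f\|_{L^\infty(0,T;L^2(\W))}^2$ while $\sup_k\lambda_k^{-1}\log^2(1+\lambda_k T^\alpha)<\infty$, the forcing contribution to $\sum_k\lambda_k\|F_kf_k\|_{L^2}^2$ converges for every $\alpha\in(0,2]$. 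For the initial-data terms $v_k E_{\alpha,1}(-\lambda_k t^\alpha)$ and $b_k\,t\,E_{\alpha,2}(-\lambda_k t^\alpha)$ I would insert the bound and substitute $\tau=\lambda_k^{1/\alpha}t$, reducing the time integral to a $\tau$-integral times an explicit power of $\lambda_k$; the extra factor $t$ in the velocity term supplies enough additional decay that $b\in L^2(\W)$ suffices throughout $\alpha\in(1,2]$.

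The final step is to identify the limit as a weak solution: testing the variational formulation against an arbitrary $w=\sum_k w_k(t)\phi_k\in L^2(0,T;\widetilde H^s(\W))$ and using $\langle u,w\rangle_s=\sum_k\lambda_k u_k w_k$ together with \eqref{eq:modes} collapses the identity to $\sum_k\int_0^T(\ppa u_k+\lambda_k u_k-f_k)\,w_k\,dt=0$, which holds termwise, the interchange of sum, integral and $\ppa$ being justified by the convergence just established. I expect the main obstacle to be the summability of the initial-value contribution $v_k E_{\alpha,1}$: after the time integration the substitution leaves the weight $\lambda_k$ multiplied by $\lambda_k^{-1/\alpha}$ from the Mittag-Leffler decay, so the net exponent is $1-1/\alpha$. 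For $\alpha\le1$ this is nonpositive and $v\in L^2(\W)$ is enough, whereas for $\alpha>1$ the same bound leaves a positive power of $\lambda_k$ and the balance between the Mittag-Leffler decay and the $\widetilde H^s$-weight becomes genuinely delicate; this is precisely the regime where the sharp, $\alpha$-dependent estimates developed in Section \ref{sec:regularity} must be invoked.
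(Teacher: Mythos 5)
Your overall strategy is the same as the paper's: the paper offers no proof beyond ``summing the solutions for every eigenmode,'' so your outline --- orthogonality of the $\phi_k$ both in $L^2(\W)$ and with respect to $\langle\cdot,\cdot\rangle_s$, reduction of the $\widetilde{H}^s$-norm to $\sum_k\lambda_k\int_0^T|u_k(t)|^2\,dt$, and mode-by-mode Mittag--Leffler bounds --- is exactly the intended argument, carried out in more detail than the paper provides. Your treatment of the forcing term via Young's inequality, of the $b$-term (where the extra factor $t$ indeed suffices for all $\alpha\in(1,2]$), and of the $v$-term for $0<\alpha\le1$ is correct.

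The gap is the one you flag at the end, but your proposed fix does not work. For $1<\alpha\le2$ the $v$-contribution produces $\sum_k\lambda_k^{1-1/\alpha}v_k^2$ with a strictly positive exponent, and this cannot be rescued by invoking Section \ref{sec:regularity}: part (a) of Theorem \ref{teo:reg_grande} only yields $\|u(\cdot,t)\|_{H^{s+\gamma}(\W)}\le Ct^{-\alpha}\|v\|_{L^2(\W)}$, and $t^{-\alpha}\notin L^2(0,T)$ once $\alpha\ge1/2$. Worse, the bound of Lemma \ref{lemma:bound_mittag} is sharp in this regime ($E_{\alpha,1}(-x)\sim -1/(\Gamma(1-\alpha)x)$ for $1<\alpha<2$, and $E_{2,1}(-\lambda t^2)=\cos(\sqrt{\lambda}\,t)$ at the endpoint), so for instance at $\alpha=2$ one has $\|u\|^2_{L^2(0,T;\widetilde{H}^s(\W))}=\sum_k\lambda_kv_k^2\int_0^T\cos^2(\sqrt{\lambda_k}\,t)\,dt\ge cT\sum_k\lambda_kv_k^2$, which is infinite whenever $v\in L^2(\W)\setminus\widetilde{H}^s(\W)$. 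So the step you defer is not merely delicate: it fails, and the conclusion $u\in L^2(0,T;\widetilde{H}^s(\W))$ cannot be reached for $1<\alpha\le2$ under the hypothesis $v\in L^2(\W)$ alone; one must either strengthen the assumption on $v$ (e.g.\ $v\in\widetilde{H}^s(\W)$, as in parts (b) and (c) of Theorem \ref{teo:reg_grande}) or weaken the claimed regularity in that range. This is arguably a defect of the statement itself --- which the paper never actually proves --- but your plan as written does not close it.
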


\section{Regularity of solutions} \label{sec:regularity}
In this section we state some regularity results for solutions of the problems under consideration. We split the estimates according to whether the initial values or the forcing term are null. For the sake of brevity we omit the proofs; these can be obtained following the arguments of \cite{SakamotoYamamoto}, and are based on expansion \eqref{eq:sep_variables} and utilizing Lemmas \ref{lemma:der_mittag} and \ref{lemma:bound_mittag} to bound the corresponding coefficients. We also recall also that throughout this paper we are assuming that $\W$ is a domain with smooth boundary, so that Proposition \ref{prop:regHr} holds. According to that proposition, we fix the notation $\gamma := \min \{ s, 1/2 - \eps \}$, with $\eps > 0$ arbitrarily small.

\begin{theorem} \label{teo:reg_chico}
Let $0<\alpha \le 1$ and suppose that $f \equiv 0$. Let $u$,  given by
\eqref{eq:sep_variables},  be the solution of \eqref{eq:parabolic} with initial and boundary conditions according to \eqref{eq:boundary}. 
\begin{enumerate}
\item If $v \in L^2(\W)$, then $u \in C([0,T];L^2(\W))\cap C((0,T];\widetilde{H}^s(\W)\cap H^{s+\gamma}(\W))$ and $\ppa u \in C ((0,T]; L^2(\W))$. Moreover, there exists a constant $C>0$ such that
\begin{align*}
 & \| u \|_{C([0,T];L^2(\W))} \le C \| v \|_{L^2(\W)} , \\
& \| u(\cdot, t) \|_{H^{s+\gamma}(\W)} + \| \ppa u(\cdot, t) \|_{L^2(\W)} \le C t^{-\alpha} \| v \|_{L^2(\W)} . 
\end{align*} 
\item Assume that $v \in \widetilde{H}^s(\W)$. Then, $u \in L^2(0,T; \widetilde{H}^s(\W)\cap H^{s+\gamma}(\W))$, $\ppa u \in L^2(\W\times(0,T))$, and  the following estimate holds:
$$
\| u \|_{L^2(0,T; H^{s+\gamma}(\W))} + \| \ppa u \|_{L^2(\W\times(0,T))} \le C  \| v \|_{H^s(\rn)} .
$$
\item Furthermore, if $v \in \widetilde{H}^s(\W)$ is such that $(-\Delta)^s v \in L^2(\W)$, then $u\in C([0,T];\widetilde{H}^s(\W)\cap H^{s+\gamma}(\W))$, $\ppa u \in C ([0,T]; L^2(\W))$ and the bound
$$
\| u \|_{C([0,T];H^{s+\gamma}(\W))} + \| \ppa u \|_{C([0,T];L^2(\W))} \le C \| (-\Delta)^s v \|_{L^2(\W)} 
$$
is satisfied for some $C>0$ independent of $v$.
\end{enumerate}
\end{theorem}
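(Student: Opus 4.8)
The plan is to work modewise on the expansion \eqref{eq:sep_variables}, where under $f\equiv 0$ the coefficients reduce, by \eqref{eq:sol_modes1}, to $u_k(t) = v_k E_{\alpha,1}(-\lambda_k t^\alpha)$. Three orthogonality identities drive everything: Parseval gives $\|u(\cdot,t)\|_{L^2(\W)}^2 = \sum_k u_k(t)^2$, while the eigenrelation for $\{\phi_k\}$ yields $\langle u,u\rangle_s = \sum_k \lambda_k u_k(t)^2$ and $\|(-\Delta)^s u(\cdot,t)\|_{L^2(\W)}^2 = \sum_k \lambda_k^2 u_k(t)^2$. Crucially, Lemma \ref{lemma:der_mittag} gives $\ppa u_k = -\lambda_k u_k$, so that $\ppa u(\cdot,t) = -(-\Delta)^s u(\cdot,t)$ and hence $\|\ppa u(\cdot,t)\|_{L^2(\W)} = \|(-\Delta)^s u(\cdot,t)\|_{L^2(\W)}$; Proposition \ref{prop:regHr} (with $r=0$) then converts control of the right-hand side into control of $\|u(\cdot,t)\|_{H^{s+\gamma}(\W)}$. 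Thus all three quantities reduce to bounding the scalar sums $\sum_k \lambda_k^p v_k^2 |E_{\alpha,1}(-\lambda_k t^\alpha)|^2$ for $p\in\{1,2\}$.

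The analytic input is Lemma \ref{lemma:bound_mittag}: since $-\lambda_k t^\alpha$ is real and negative, $\arg(-\lambda_k t^\alpha)=\pi$ and the lemma applies, giving $|E_{\alpha,1}(-\lambda_k t^\alpha)| \le C/(1+\lambda_k t^\alpha)$; this is complemented by the elementary global bound $0<E_{\alpha,1}(-x)\le 1$ for $x\ge 0$ and $0<\alpha\le 1$. From $\lambda_k/(1+\lambda_k t^\alpha)\le t^{-\alpha}$ one gets the pointwise estimate $\lambda_k|E_{\alpha,1}(-\lambda_k t^\alpha)|\le C t^{-\alpha}$, which I would use for part (a): summing yields $\sum_k \lambda_k^2 v_k^2 |E_{\alpha,1}|^2 \le C t^{-2\alpha}\sum_k v_k^2$, hence $\|\ppa u(\cdot,t)\|_{L^2(\W)} + \|u(\cdot,t)\|_{H^{s+\gamma}(\W)} \le C t^{-\alpha}\|v\|_{L^2(\W)}$, while $|E_{\alpha,1}|\le 1$ gives $\|u(\cdot,t)\|_{L^2(\W)}\le\|v\|_{L^2(\W)}$. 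Continuity follows from a Weierstrass M-test: on $[0,T]$ the $L^2$-series is dominated by the summable $\sum_k v_k^2$, and on each $[t_0,T]\subset(0,T]$ the $\widetilde{H}^s(\W)$- and $H^{s+\gamma}(\W)$-series are dominated termwise using the uniform factor $t_0^{-\alpha}$, so the partial sums (continuous in $t$) converge uniformly.

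For part (b) the core is the time-integrated bound; I would show that for every $k$,
\[
\lambda_k \int_0^T |E_{\alpha,1}(-\lambda_k t^\alpha)|^2\,dt \le C
\]
uniformly in $k$, by inserting the bound of Lemma \ref{lemma:bound_mittag} and rescaling $\tau=\lambda_k^{1/\alpha}t$; the resulting expression $\lambda_k^{1-1/\alpha}\int_0^{\lambda_k^{1/\alpha}T}(1+\tau^\alpha)^{-2}\,d\tau$ stays uniformly bounded, which is where a short case distinction around $\alpha=1/2$ enters, governed by the convergence of $\int_0^\infty(1+\tau^\alpha)^{-2}\,d\tau$. Multiplying by the remaining $\lambda_k$ and summing against $\sum_k\lambda_k v_k^2 = \langle v,v\rangle_s \le C\|v\|_{H^s(\rn)}^2$ controls both $\|u\|_{L^2(0,T;H^{s+\gamma}(\W))}$ and $\|\ppa u\|_{L^2(\W\times(0,T))}$.

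Part (c) is the most direct: when $(-\Delta)^s v\in L^2(\W)$, i.e. $\sum_k \lambda_k^2 v_k^2<\infty$, the bound $|E_{\alpha,1}|\le 1$ gives $\sum_k \lambda_k^2 u_k(t)^2 \le \|(-\Delta)^s v\|_{L^2(\W)}^2$ \emph{uniformly} in $t$, so the M-test applies on all of $[0,T]$ and yields continuity of $(-\Delta)^s u = -\ppa u$ into $L^2(\W)$ up to $t=0$ (note $E_{\alpha,1}(0)=1$), hence of $u$ into $H^{s+\gamma}(\W)$ and $\widetilde{H}^s(\W)$, with the stated bound. The main obstacle is the uniform-in-$k$ control of the time integral in (b), which hinges on the precise decay rate in Lemma \ref{lemma:bound_mittag}; elsewhere the only real work is the bookkeeping that upgrades the pointwise mode estimates to the claimed continuity statements via uniform convergence of the partial sums.
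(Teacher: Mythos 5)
Your proof is correct and follows exactly the route the paper indicates for this (omitted) proof: the eigenfunction expansion \eqref{eq:sep_variables} with $u_k=v_kE_{\alpha,1}(-\lambda_k t^\alpha)$, the identities of Lemma \ref{lemma:der_mittag} to rewrite $\ppa u=-(-\Delta)^su$, the decay bound of Lemma \ref{lemma:bound_mittag} to control the modes, and Proposition \ref{prop:regHr} to upgrade to $H^{s+\gamma}(\W)$, as in Sakamoto--Yamamoto. The only cosmetic remark is that the complete-monotonicity bound $0<E_{\alpha,1}(-x)\le 1$ is not among the paper's stated lemmas, but it is not needed since Lemma \ref{lemma:bound_mittag} already gives $|E_{\alpha,1}(-\lambda_kt^\alpha)|\le C$.
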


Regularity estimates for the \emph{fractional diffusion} problem with a non-homogeneous right-hand side function $f$ are also attainable.
\begin{theorem}
 Let $0<\alpha \le 1$ and $v \equiv 0$. Consider $u$,  given by \eqref{eq:sep_variables}, the solution of \eqref{eq:parabolic} with homogeneous initial and boundary conditions. If \jp{$f \in  L^\infty(0,T;L^2(\W))$}, then $u \in L^2(0,T;\widetilde{H}^s(\W)\cap H^{s+\gamma}(\W))$, $\ppa u \in L^2(\W\times(0,T))$ and
$$
\| u \|_{L^2(0,T;H^{s+\gamma}(\W))} + \| \ppa u \|_{L^2(\W\times (0,T))} \le C \| f \|_{L^\infty(0,T;L^2(\W))} .
$$
\end{theorem}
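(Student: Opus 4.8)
The plan is to reduce everything to estimates on the individual modes and then sum. Since $v\equiv0$, formula \eqref{eq:sol_modes1} gives $u_k=F_k(t)f_k$, while the equation in \eqref{eq:modes} reads $\ppa u_k=f_k-\lambda_k u_k$. Because $\{\phi_k\}$ is orthonormal in $L^2(\W)$, the $L^2(\W)$-based quantities decouple over $k$: one has $\|\ppa u\|_{L^2(\W\times(0,T))}^2=\sum_k\|\ppa u_k\|_{L^2(0,T)}^2$ and $\|(-\Delta)^s u(\cdot,t)\|_{L^2(\W)}^2=\sum_k\lambda_k^2|u_k(t)|^2$. Thus the whole statement would follow from the two per-mode bounds
\[
\|\lambda_k u_k\|_{L^2(0,T)}\le C\|f_k\|_{L^2(0,T)},\qquad \|\ppa u_k\|_{L^2(0,T)}\le C\|f_k\|_{L^2(0,T)},
\]
with $C$ independent of $k$, together with the elliptic regularity of Proposition \ref{prop:regHr} to transfer the $L^2$ control of $(-\Delta)^s u$ into $H^{s+\gamma}$ control of $u$ (the latter being a seminorm that does \emph{not} split over modes).

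The core of the argument is to prove these two inequalities with a $k$-uniform constant. I would resist estimating $u_k$ by Young's inequality against the $L^1$-norm of the kernel $\tau^{\alpha-1}E_{\alpha,\alpha}(-\lambda_k\tau^\alpha)$: Lemma \ref{lemma:bound_mittag} and the substitution $\sigma=\lambda_k\tau^\alpha$ give only $\int_0^T\lambda_k\tau^{\alpha-1}|E_{\alpha,\alpha}(-\lambda_k\tau^\alpha)|\,d\tau\le C\log(1+\lambda_k T^\alpha)$, and this spurious logarithm is not summable against $\sum_k|f_k|^2$. Instead I would pass to the Laplace variable: extending $f_k$ by zero and using $\widehat{F_k}(z)=(z^\alpha+\lambda_k)^{-1}$, the two operators above become the Fourier multipliers $\lambda_k/(z^\alpha+\lambda_k)$ and $z^\alpha/(z^\alpha+\lambda_k)$ evaluated at $z=i\w$. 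For $\alpha\in(0,1]$ one has $\cos(\alpha\pi/2)\ge0$, whence $|(i\w)^\alpha+\lambda_k|^2=|\w|^{2\alpha}+2|\w|^\alpha\lambda_k\cos(\alpha\pi/2)+\lambda_k^2\ge\max\{|\w|^{2\alpha},\lambda_k^2\}$; both multipliers are therefore bounded by $1$ uniformly in $\w$ and $k$, and Plancherel's theorem yields the two displayed inequalities (with $C=1$, restricting from the line to $(0,T)$ only lowering the norm). Equivalently, one may follow \cite{SakamotoYamamoto} and bound the modes directly from the decay in Lemma \ref{lemma:bound_mittag}.

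Granting the per-mode bounds, I would sum over $k$ and use $\sum_k|f_k(t)|^2=\|f(\cdot,t)\|_{L^2(\W)}^2$ to get
\[
\|\ppa u\|_{L^2(\W\times(0,T))}^2+\|(-\Delta)^s u\|_{L^2(\W\times(0,T))}^2\le C\int_0^T\|f(\cdot,t)\|_{L^2(\W)}^2\,dt\le CT\,\|f\|_{L^\infty(0,T;L^2(\W))}^2 ,
\]
which already gives the stated bound on $\ppa u$. For the spatial regularity, at almost every fixed $t$ the function $u(\cdot,t)$ solves \eqref{eq:homogeneous} with datum $(-\Delta)^s u(\cdot,t)=f(\cdot,t)-\ppa u(\cdot,t)\in L^2(\W)$, so Proposition \ref{prop:regHr} with $r=0$ gives $|u(\cdot,t)|_{H^{s+\gamma}(\W)}\le C\|(-\Delta)^s u(\cdot,t)\|_{L^2(\W)}$; moreover the energy $\langle u(\cdot,t),u(\cdot,t)\rangle_s=\sum_k\lambda_k|u_k(t)|^2\le\lambda_1^{-1}\|(-\Delta)^s u(\cdot,t)\|_{L^2(\W)}^2$, which by the Poincar\'e inequality following \eqref{eq:pint} places $u(\cdot,t)$ in $\widetilde H^s(\W)$. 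Squaring these pointwise bounds, integrating in $t$, and inserting the previous display would close the estimate for $\|u\|_{L^2(0,T;H^{s+\gamma}(\W))}$.

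The only genuine difficulty is the $k$-uniform per-mode estimate: the naive kernel bound loses a logarithm in $\lambda_k$, and one must exploit the precise location of the symbol $z^\alpha/(z^\alpha+\lambda_k)$ on the imaginary axis (equivalently, the sharp decay of $E_{\alpha,\alpha}$, which is why $\alpha\le1$ enters through $\cos(\alpha\pi/2)\ge0$) to recover uniformity. Everything else is bookkeeping with the orthonormal expansion \eqref{eq:sep_variables} and one application of Proposition \ref{prop:regHr}.
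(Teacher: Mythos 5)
Your proof is correct, and its skeleton (eigenfunction decomposition, $k$-uniform per-mode bounds, summation, then one application of Proposition \ref{prop:regHr} to upgrade $L^2$ control of $(-\Delta)^s u$ to $H^{s+\gamma}$ control of $u$) is exactly the one the paper intends: the authors omit the proof and point to \cite{SakamotoYamamoto}, whose argument has precisely this structure. Where you genuinely diverge is in the key per-mode estimate $\|\lambda_k F_k(\cdot)f_k\|_{L^2(0,T)}\le C\|f_k\|_{L^2(0,T)}$. The referenced proof obtains it by Young's convolution inequality, and your reason for rejecting that route is based on a mis-assessment: the logarithm $\log(1+\lambda_k T^\alpha)$ only appears if one estimates the kernel through the crude decay of Lemma \ref{lemma:bound_mittag}. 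Using instead the exact identity of Lemma \ref{lemma:der_mittag}, $\frac{d}{dt}E_{\alpha,1}(-\lambda_k t^\alpha)=-\lambda_k t^{\alpha-1}E_{\alpha,\alpha}(-\lambda_k t^\alpha)$, together with the nonnegativity of $E_{\alpha,\alpha}$ on the negative real axis for $\alpha\in(0,1]$ (complete monotonicity), one gets $\|\lambda_k t^{\alpha-1}E_{\alpha,\alpha}(-\lambda_k t^\alpha)\|_{L^1(0,T)}=1-E_{\alpha,1}(-\lambda_k T^\alpha)\le 1$, with no logarithm; this is why the paper lists Lemma \ref{lemma:der_mittag} among the ingredients. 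Your Plancherel/multiplier substitute, based on $|(i\w)^\alpha+\lambda_k|^2\ge|\w|^{2\alpha}+\lambda_k^2$ for $\alpha\le1$, is a valid and clean alternative (and arguably more transparent about where the restriction $\alpha\le1$ enters, namely $\cos(\alpha\pi/2)\ge0$ versus complete monotonicity), at the modest cost of justifying that the boundary values of the Laplace transform of the causal kernel give its Fourier transform. Only your parenthetical claim that one may ``equivalently'' bound the modes from the decay in Lemma \ref{lemma:bound_mittag} alone should be dropped, since by your own computation that route does lose a logarithm.
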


Estimates for the \emph{fractional diffusion-wave} case are obtained similarly.

\begin{theorem}\label{teo:reg_grande} 
Let $1<\alpha \le 2$ and suppose that $f \equiv 0$. Let $u$,  given by
\eqref{eq:sep_variables},  be the solution of \eqref{eq:parabolic} with initial/boundary conditions \eqref{eq:boundary} and  \eqref{eq:boundary_wave}. 
\begin{enumerate}
\item Assume that $v \in L^2(\W)$ and $b \in L^2(\W)$. Then, $u \in C([0,T];L^2(\W))\cap C((0,T];\widetilde{H}^s(\W)\cap H^{s+\gamma}(\W))$ and $\ppa u \in C ((0,T]; L^2(\W))$. Moreover, there exists a constant $C>0$ such that
\begin{align*}
 & \| u \|_{C([0,T];L^2(\W))}  \le C \left( \| v \|_{L^2(\W)} + \| b \|_{L^2(\W)} \right) , \\
& \| u(\cdot, t) \|_{H^{s+\gamma}(\W)} + \| \ppa u(\cdot, t) \|_{L^2(\W)} \le C \left( t^{-\alpha} \| v \|_{L^2(\W)} + t^{1-\alpha} \| b \|_{L^2(\W)} \right)  . 
\end{align*} 
\item If $v \in \widetilde{H}^s(\W)$ and $b \in L^2(\W)$, then $\pp_t u \in C([0,T];H^{-s}(\W))$, and
$$
\| \pp_t u \|_{C([0,T]; H^{-s}(\W))} \le C (\| v \|_{H^s(\rn)} + \| b \|_{L^2(\W)}).
$$
\item Moreover, if $v \in \widetilde{H}^s(\W)$ is such that $(-\Delta)^s v \in L^2(\W)$ and $b \in \widetilde{H}^s(\W)$, then $u \in C([0,T]; \widetilde{H}^s(\W)\cap H^{s+\gamma}(\W)) \cap C^1([0,T];L^2(\W))$, $\ppa u \in C([0,T];L^2(\W))$, and  the following estimates hold:
\begin{align*}
& \| u \|_{C([0,T]; H^{s+\gamma}(\W))} + \| \ppa u \|_{C([0,T];L^2(\W))} \le C  \left( \| (-\Delta)^s v \|_{L^2(\W)} + \| b \|_{H^s(\rn)} \right), \\
&   \| u \|_{C^1([0,T]; L^2(\W))} \le C \left(\| (-\Delta)^s v \|_{L^2(\W)} + \| b \|_{L^2(\W)} \right) .
\end{align*}
\end{enumerate}
\end{theorem}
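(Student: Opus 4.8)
The plan is to mimic the strategy announced in the text: expand $u$ via \eqref{eq:sep_variables}, where each mode $u_k$ is given by the diffusion-wave representation \eqref{eq:sol_modes2} with $f\equiv 0$, namely $u_k(t) = v_k E_{\alpha,1}(-\lambda_k t^\alpha) + b_k\, t\, E_{\alpha,2}(-\lambda_k t^\alpha)$. All three parts reduce to controlling the modal coefficients $u_k(t)$, $\ppa u_k(t)$ and $u_k'(t)$ in terms of $v_k$, $b_k$ and $\lambda_k$, and then summing against the appropriate norm. The two essential ingredients are Lemma \ref{lemma:bound_mittag}, which yields $|E_{\alpha,\mu}(-\lambda_k t^\alpha)| \le C/(1+\lambda_k t^\alpha)$ once $\alpha\in(1,2)$ guarantees an admissible sectorial angle, and the differentiation identities in Lemma \ref{lemma:der_mittag}, which give $\ppa u_k = -\lambda_k u_k$ (so that bounding $\ppa u_k$ in $L^2(\W)$ amounts to bounding $\lambda_k u_k$) together with $\tfrac{d}{dt}\bigl(t E_{\alpha,2}(-\lambda_k t^\alpha)\bigr) = E_{\alpha,1}(-\lambda_k t^\alpha)$ for the $C^1$-in-time estimate. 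Throughout, one passes from the $L^2(\W)$ control of $\sum \lambda_k^2 |u_k|^2$ to the $H^{s+\gamma}(\W)$-norm using Proposition \ref{prop:regHr}, exactly as indicated by the definition $\gamma := \min\{s,1/2-\eps\}$.

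For part (a) I would estimate each factor separately: by Lemma \ref{lemma:bound_mittag} one has $|v_k E_{\alpha,1}(-\lambda_k t^\alpha)| \le C|v_k|$, giving the $C([0,T];L^2)$ bound after Parseval; for the $H^{s+\gamma}$ and $\ppa u$ bounds one multiplies by $\lambda_k$ and extracts the decay, using $\lambda_k |E_{\alpha,1}(-\lambda_k t^\alpha)| \le C\,t^{-\alpha}$ (from $\lambda_k/(1+\lambda_k t^\alpha)\le t^{-\alpha}$) to produce the $t^{-\alpha}\|v\|_{L^2}$ term, and analogously $\lambda_k\, t\, |E_{\alpha,2}(-\lambda_k t^\alpha)| \le C\, t^{1-\alpha}$ for the $t^{1-\alpha}\|b\|_{L^2}$ term. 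Finally, $\ppa u_k = -\lambda_k u_k$ transfers these to bounds on $\|\ppa u(\cdot,t)\|_{L^2(\W)}$. For part (b), the quantity to control is $\pp_t u_k$; using $\tfrac{d}{dt}E_{\alpha,1}(-\lambda_k t^\alpha) = -\lambda_k t^{\alpha-1}E_{\alpha,\alpha}(-\lambda_k t^\alpha)$ and $\tfrac{d}{dt}\bigl(t E_{\alpha,2}\bigr)=E_{\alpha,1}$, one finds that the $v$-contribution to $\pp_t u$ carries a factor $\lambda_k$, which is absorbed by measuring in $H^{-s}(\W)$ (the $\lambda_k^{-1}$ weight in the $H^{-s}$ norm via the eigenbasis compensates the $\lambda_k$), yielding $\|v\|_{H^s(\rn)}$, while the $b$-contribution is bounded by Lemma \ref{lemma:bound_mittag} directly in $L^2\hookrightarrow H^{-s}$, giving $\|b\|_{L^2}$. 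Part (c) is the strongest: assuming $(-\Delta)^s v\in L^2$ means $\sum \lambda_k^2 v_k^2 <\infty$ and $b\in\widetilde H^s$ means $\sum \lambda_k b_k^2<\infty$, so the extra powers of $\lambda_k$ needed for the uniform-in-time $H^{s+\gamma}$ and $C^1([0,T];L^2)$ bounds are now summable, and Lemma \ref{lemma:bound_mittag} removes the temporal singularity, delivering bounds with no negative power of $t$.

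The continuity statements (membership in $C([0,T];\cdot)$, $C((0,T];\cdot)$, $C^1([0,T];\cdot)$) require, beyond the uniform bounds, that the partial sums $\sum_{k\le N} u_k(t)\phi_k$ converge uniformly in $t$ in the relevant norm; this follows from the dominated-convergence / uniform-Cauchy argument once the tail $\sum_{k>N}$ is controlled by the same summable majorants, together with continuity in $t$ of each $E_{\alpha,\mu}(-\lambda_k t^\alpha)$. The distinction between closed and half-open time intervals reflects exactly whether the decay factors $t^{-\alpha}$, $t^{1-\alpha}$ are integrable/bounded up to $t=0$: with only $v\in L^2$ (part (a)) the singular factors force the half-open interval $(0,T]$ for the $H^{s+\gamma}$ norm, whereas the stronger data hypotheses in part (c) remove them and allow $[0,T]$.

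The main obstacle I anticipate is the diffusion-wave range $1<\alpha<2$ itself: the Mittag-Leffler bound in Lemma \ref{lemma:bound_mittag} requires the argument $-\lambda_k t^\alpha$ to lie in a sector $\delta<|\arg z|\le\pi$ with $\pi\alpha/2<\delta$, and for $\alpha$ close to $2$ this sectorial window shrinks, so one must verify the hypotheses are met uniformly in $k$ and handle the endpoint $\alpha=2$ (where $E_{2,1}=\cosh\sqrt{\cdot}$, $E_{2,2}=\sinh(\sqrt\cdot)/\sqrt\cdot$ are oscillatory rather than decaying) by a separate, classical argument. The interplay between the algebraic decay in $\lambda_k$ supplied by the data regularity and the precise $t$-power extracted from the Mittag-Leffler asymptotics is the delicate bookkeeping, but the reference \cite{SakamotoYamamoto} carries out an entirely analogous computation, so I expect the estimates to go through once the sectorial condition is secured.
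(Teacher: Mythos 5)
Your proposal follows essentially the same route the paper itself prescribes (the proof of Theorem \ref{teo:reg_grande} is omitted in the paper, which defers to the arguments of \cite{SakamotoYamamoto} based on the expansion \eqref{eq:sep_variables} and Lemmas \ref{lemma:der_mittag} and \ref{lemma:bound_mittag}), and the modal bookkeeping you lay out — $\lambda_k/(1+\lambda_k t^\alpha)\le t^{-\alpha}$, $\ppa u_k=-\lambda_k u_k$, the $\lambda_k^{-1}$ weight of the $H^{-s}$ norm absorbing the extra $\lambda_k$ in part (b), and the summability supplied by $(-\Delta)^s v\in L^2$ and $b\in\widetilde H^s$ in part (c) — is correct. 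Your caveat about the endpoint $\alpha=2$, which formally lies outside the hypotheses of Lemma \ref{lemma:bound_mittag} and requires the classical trigonometric representation instead, is a fair observation that applies equally to the paper's own (omitted) argument.
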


Finally, estimates for problems with non-null forcing term and $\alpha \in (1,2]$ have the following form.
\begin{theorem}
 Let $1<\alpha \le 2$, $v \equiv 0$ and $b \equiv 0$. Consider $u$,  given by
\eqref{eq:sep_variables}, be the solution of \eqref{eq:parabolic} with homogeneous initial and boundary conditions. If $f \in C([0,T]; L^2(\W))$ is such that $(-\Delta)^s f \in L^2(\W \times (0,T))$, then $u \in C([0,T];\widetilde{H}^s(\W)\cap H^{s+\gamma}(\W))$, $\ppa u \in C([0,T];L^2(\W))$ and
\begin{equation*}
\begin{split}
\| u \|_{C([0,T];H^{s+\gamma}(\W))} + & \| \ppa u \|_{C([0,T];L^2(\W))} \le  \\
& \le C \left( \| (-\Delta)^s f \|_{L^2(\W\times(0,T))} + \| f \|_{C([0,T];L^2(\W))} \right).
\end{split}
\end{equation*}
\end{theorem}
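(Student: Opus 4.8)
The plan is to work from the modal representation \eqref{eq:sep_variables}. Since $v\equiv 0$ and $b\equiv 0$, formula \eqref{eq:sol_modes2} reduces each mode to $u_k(t)=\int_0^t L_k(t-r)\,f_k(r)\,dr$, where I abbreviate $L_k(\tau):=\tau^{\alpha-1}E_{\alpha,\alpha}(-\lambda_k\tau^\alpha)$ and $f_k(r)=(f(\cdot,r),\phi_k)$. The whole theorem reduces to one quantitative estimate: a uniform-in-$t$ bound for $\sum_k(\lambda_k u_k(t))^2$. Indeed, since $\{\phi_k\}$ is orthonormal and $(-\Delta)^s\phi_k=\lambda_k\phi_k$, one has $\|(-\Delta)^s u(\cdot,t)\|_{L^2(\W)}^2=\sum_k(\lambda_k u_k(t))^2$; Proposition \ref{prop:regHr}, applied with right-hand side $g=(-\Delta)^s u(\cdot,t)\in L^2(\W)$ (so $r=0\ge -s$), then controls $\|u(\cdot,t)\|_{H^{s+\gamma}(\W)}$, while the identity $\ppa u_k=f_k-\lambda_k u_k$ coming from \eqref{eq:modes} gives $\|\ppa u(\cdot,t)\|_{L^2(\W)}^2\le 2\|f(\cdot,t)\|_{L^2(\W)}^2+2\sum_k(\lambda_k u_k(t))^2$. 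This last line is exactly where the term $\|f\|_{C([0,T];L^2(\W))}$ of the statement enters; the other term, $\|(-\Delta)^s f\|_{L^2(\W\times(0,T))}$, will absorb $\sum_k(\lambda_k u_k(t))^2$.

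For the key estimate I would \emph{not} transfer the spatial derivatives by integrating by parts in time, since the hypothesis $f\in C([0,T];L^2(\W))$ carries no temporal smoothness; instead I exploit directly that $(-\Delta)^s f\in L^2$, i.e. that $\int_0^T\sum_k\lambda_k^2 f_k(r)^2\,dr<\infty$. Writing $\lambda_k u_k(t)=\int_0^t L_k(t-r)\,\bigl(\lambda_k f_k(r)\bigr)\,dr$ and applying the Cauchy--Schwarz inequality with the weight $|L_k|$ yields $(\lambda_k u_k(t))^2\le A_k(t)\int_0^t |L_k(t-r)|\,(\lambda_k f_k(r))^2\,dr$, where $A_k(t):=\int_0^t|L_k(\tau)|\,d\tau$. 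By Lemma \ref{lemma:bound_mittag}, whose hypothesis $\pi\alpha/2<\delta<\pi$ is met precisely because $\alpha<2$, one has $|L_k(\tau)|\le C\,\tau^{\alpha-1}/(1+\lambda_k\tau^\alpha)$, hence $A_k(t)\le C\lambda_k^{-1}\log(1+\lambda_k t^\alpha)$. Summing over $k$ and interchanging sum and integral reduces everything to the pointwise kernel bound $A_k(t)\,|L_k(\tau)|\le C$, uniformly in $k$, in $\tau\in(0,t)$ and in $t\in[0,T]$: a two-region analysis (according as $\lambda_k\tau^\alpha\le 1$ or $\lambda_k\tau^\alpha>1$) gives $A_k(t)\,|L_k(\tau)|\le C\lambda_k^{-(2-1/\alpha)}\log(1+\lambda_k T^\alpha)$, which is bounded since $2-1/\alpha>0$ for $\alpha\in(1,2]$. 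Granting this, $\sum_k(\lambda_k u_k(t))^2\le C\int_0^t\sum_k\lambda_k^2 f_k(r)^2\,dr\le C\|(-\Delta)^s f\|_{L^2(\W\times(0,T))}^2$ uniformly in $t$, the desired estimate.

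The $C([0,T];\cdot)$ regularity follows from the same bound applied to tails. Since $\int_0^T\sum_k\lambda_k^2 f_k(r)^2\,dr<\infty$, the previous estimate applied to $\sum_{k>N}$ shows that the partial sums $\sum_{k\le N}\lambda_k u_k(t)\phi_k$ converge to $(-\Delta)^s u(\cdot,t)$ uniformly in $t\in[0,T]$ in $L^2(\W)$; each partial sum is continuous in $t$ because the $f_k$ are continuous and $L_k\in L^1(0,T)$, so $(-\Delta)^s u\in C([0,T];L^2(\W))$ and therefore $\ppa u=f-(-\Delta)^s u\in C([0,T];L^2(\W))$. Applying Proposition \ref{prop:regHr} to the differences $(-\Delta)^s(u(\cdot,t_1)-u(\cdot,t_2))$ upgrades this to $u\in C([0,T];H^{s+\gamma}(\W))$, while membership in $\widetilde{H}^s(\W)$ and its continuity follow from $\sum_k\lambda_k u_k(t)^2\le\lambda_1^{-1}\sum_k\lambda_k^2 u_k(t)^2$.

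The main obstacle is the uniform kernel bound $A_k(t)\,|L_k(\tau)|\le C$ of the second paragraph: it is the one place where the Mittag--Leffler decay of Lemma \ref{lemma:bound_mittag} and the arithmetic $2-1/\alpha>0$ are genuinely used, and it is what lets the weaker norm $\|(-\Delta)^s f\|_{L^2}$ suffice, rather than the stronger spatial norm of $f$ that a cruder, unweighted Cauchy--Schwarz estimate would force. The endpoint $\alpha=2$ is not covered by Lemma \ref{lemma:bound_mittag}, but there $E_{2,2}(-\lambda_k\tau^2)=\sin(\sqrt{\lambda_k}\,\tau)/(\sqrt{\lambda_k}\,\tau)$ gives $|L_k(\tau)|\le\lambda_k^{-1/2}$ and $A_k(t)\,|L_k(\tau)|\le T/\lambda_k$, so the same conclusion holds and can be recorded as a separate, elementary case.
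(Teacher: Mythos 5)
Your argument is correct and is exactly the route the paper prescribes (the proof is omitted there, with a pointer to Sakamoto--Yamamoto): expand in the eigenbasis and control the coefficients of \eqref{eq:sep_variables} via Lemmas \ref{lemma:der_mittag} and \ref{lemma:bound_mittag}. The weighted Cauchy--Schwarz step with the kernel $\tau^{\alpha-1}E_{\alpha,\alpha}(-\lambda_k\tau^{\alpha})$, the resulting uniform bound $A_k(t)\,|L_k(\tau)|\le C\lambda_k^{1/\alpha-2}\log(1+\lambda_k T^{\alpha})$ (bounded since $\lambda_k\ge\lambda_1>0$ and $2-1/\alpha>0$), and the separate elementary treatment of the endpoint $\alpha=2$, which Lemma \ref{lemma:bound_mittag} does not cover, are all sound.
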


\section{Numerical approximations} \label{sec:discretization}
In this section we devise a discrete scheme to approximate \eqref{eq:parabolic}. To this end, standard Galerkin finite elements are utilized in the spatial discretization (following \cite{AcostaBorthagaray}) and a convolution quadrature is used for the time variable (following \cite{Jin}).

\subsection{Semi discrete scheme}
For an appropriate treatment, it is convenient to derive the numerical scheme in two steps. In first place we discretize in space, and afterwards in time.  We follow the ideas developed in \cite{Jin}, taking advantage of the fact that, from the theoretical point of view, minor changes are required to handle with the fractional Laplacian instead of its classical counterpart.  

Indeed, let $\mathcal{T}_{h}$ be a shape regular and quasi-uniform admissible simplicial mesh on $\Omega$, and let $X_{h} \subset \widetilde{H}^s(\Omega)$ be the piecewise linear finite element space associated with $\mathcal{T}_h$, namely,
$$\jp{
X_{h} := \{u_h \in C(\overline \Omega) \colon \restr{u_h}{T} \in \mathcal{P}^1 \ \forall T \in \mathcal{T}_{h}, \ u_h \big|_{\pp \W} = 0  \} .
}$$ 

The semidiscrete problem reads:  find $u_h \colon [0, T] \to X_h$ such that 
\begin{equation} 
\label{semiDiscreto}
\left\lbrace
  \begin{array}{rl}
      ( \ppa u_h  , w )  +  \langle u_h, w \rangle_s & =  \left( f, w \right), \quad \forall w \in X_h, \\
      u_h(0) & = v_h , \\
      u'_h(0) & = b_h, \text{ if } \alpha \in (1,2].
      \end{array}
\right.
\end{equation}
Here, $v_h = P_h v$, $b_h = P_h b$, and $P_h$ denotes the $L^2(\W)$ projection on $X_h$. 

Observe that, defining the discrete fractional Laplacian $A_{h}: X_h \rightarrow X_{h}$ as the unique operator that satisfies
$$ ( A_{h} w , v ) = \langle w , v \rangle_s, \text{  for all } w,v \in X_h, $$
and considering $f_h:=P_h f$, we may rewrite \eqref{semiDiscreto} as
\begin{equation} 
\left\lbrace
  \begin{array}{rl}
      \ppa u_h   +  A_h u_h & =  f_h, \\
      u_h(0) & = v_h , \\
      u'_h(0) & = b_h, \text{ if } \alpha \in (1,2].
      \end{array}
\label{semi}
\right.
\end{equation}

\subsection{Fully discrete scheme}
At this point, a suitable discretization of the Caputo differentiation operator is required to obtain a fully discrete scheme. To this end, we employ the convolution quadrature technique described by Lubich in \cite{Lub2, Lub1}, which allows us to derive discrete estimations of an integral which involve singular kernels.

Upon dividing $[0,T]$ uniformly with a time step size $\tau = T/N$, and letting $t = n \tau$ ($n \in \{ 1, \ldots, N\}$), by means of the convolution  quadrature rule we are able to estimate the Riemann-Liouville operator of a function $g$ by
\begin{equation}
\label{conv_dis}
\rl g(t) \approx  \sum_{j = 0}^n  \omega_j g(t - j\tau), 
\end{equation}
where the weights $\{\omega_j\}_{j \in \mathbb{N}_0 }$ are obtained as the coefficients of the power series 
$$
\left( \frac{1 - \xi}{\tau} \right)^{\alpha} = \sum^{\infty}_{j=0} \omega_{j}\xi^j .
$$

For the reader's convenience we give an overview of the main ideas of this technique in Appendix \ref{sec:CQ} and refer the reader, for example, to \cite{Lub2, Lub1} for further details.

 We are now able to suitably discretize the Caputo differentiation operator. To this end, we need to reformulate \eqref{eq:parabolic} using the Riemann-Liouville derivative instead of the Caputo one. It is well-known that these two operators are related by (see, for example, \cite[Theorem 3.1]{Diethelm})
$$
\ppa u(t) = \rl \left( u(t) - \sum_{k=0}^{\left\lfloor \alpha \right\rfloor} \frac{u^{(k)}(0)}{k!}t^k \right).
 $$
Thus, we rewrite \eqref{semi} for the \emph{fractional diffusion} case as
\begin{equation*} 
\left\lbrace
  \begin{array}{rl}
      \rl( u_h - v_h)  +  A_h u_h & =  f_h \\
      u_h(0) & = v_h, 
      \end{array}
\label{semi_rl}
\right.
\end{equation*}
and for the \emph{fractional diffusion-wave} case as
\begin{equation*} 
\left\lbrace
  \begin{array}{rl}
      \rl( u_h - v_h - tb_h)   +  A_h u_h & =  f_h \\
      u_h(0) & = v_h  \\
      u'_h(0) & = b_h.
      \end{array}
\label{semi_rl_ondas}
\right.
\end{equation*}

Replacing the Riemann-Liouville derivative by its discrete version given by \eqref{conv_dis}, and that we will denote by $\dtd^{\alpha}$, we formulate the fully discrete problem as: find $U^n_{h} \in X_h$, with $n = \{1,\ldots,N \}$, such that  
\begin{equation} 
\left\lbrace
  \begin{array}{rl}
      \dtd^{\alpha} U_h^n   +  A_h U_h^n & =  \dtd^{\alpha}v_h +  F_h^n \\
      U_h^0 & = v_h, 
      \end{array}
\label{full}
\right.
\end{equation}
or
\begin{equation} 
\left\lbrace
  \begin{array}{rl}
     \dtd^{\alpha} U_h^n   +  A_h U_h^n & =  \dtd^{\alpha}v_h + (\dtd^{\alpha}t)b_h +  F_h^n \\
      U_h^0 & = v_h, 
      \end{array}
\label{full_ondas}
\right.
\end{equation}
 for \emph{fractional diffusion} and \emph{fractional diffusion-wave} problems respectively, where $F^n_h = P_hf(t_n)$.

In order to obtain a better error estimation in the \emph{diffusion-wave} case, it is necessary to replace $F^n_h$ with a corrected term $G^n_h := \dtd \partial^{-1}_t f_h(t_n) $. See \cite{Jin} for further details.

For the sake of the reader's convenience, we conclude this section by giving the vectorial form of the fully discrete scheme. Let $\{ \varphi_i \}_{i=1,\ldots,\mathcal{N}}$ be the Lagrange nodal basis that generates $X_h$. Let $U^n$, $F^n$ and $G^n \in \mathbb{R}^{\mathcal{N}}$, $n=0,\ldots,N$ be such that $U^n_h = \sum_{i=1}^{\mathcal{N}} U^n_i\varphi_i$, $F^n_h = \sum_{i=1}^{\mathcal{N}} F^n_i\varphi_i$ and $G^n_h = \sum_{i=1}^{\mathcal{N}} G^n_i\varphi_i$, where $U_h^n$ denotes the solution of the fully discrete problem. 
Then we formulate \eqref{full} and \eqref{full_ondas}, respectively, in the following vectorial equations:
\begin{equation*} 
     M^{-1}\cdot(\w_0 M  +  K) \cdot U^n  =  \left(\sum_{j=0}^{n}\w_j \right)U^{0} - \sum^n_{j=1}\w_jU^{n-j} +  F^n
\label{full_vect}
\end{equation*}
and
\begin{equation*} 
  \begin{aligned}
     M^{-1}\cdot(\w_0 M  +  K) \cdot U^n  & =  \left(\sum_{j=0}^{n}\w_j \right)U^{0} + \left(\sum_{j=0}^{n}\w_j\tau(n-j) \right) v_h\\ &  - \sum^n_{j=1}\w_jU^{n-j} +  G^n. 
      \end{aligned}
\label{full_ondas_vect}
\end{equation*}
Above, $M,K \in \mathbb{R}^{\mathcal{N}\times\mathcal{N}}$ are the mass and stiffness matrices, respectively. Namely, $M_{i,j} = (\varphi_i,\varphi_j)$ and $K_{i,j} = \langle \varphi_i , \varphi_j \rangle_s$. 

The computation and assembly of the stiffness matrix in  dimension greater than one is not a trivial task. However, this problem for two-dimensional domains has been tackled in \cite{ABB}, where the authors provide a short MATLAB implementation.

There are several options to compute the coefficients $\{\w_j\}_{j \in \mathbb{N}_0 }$. Recalling that 
$$
\left(\frac{ 1 - \xi}{\tau} \right)^{\alpha} = \sum_{j=0}^{\infty} \w_j \xi^n, 
$$
 Fast Fourier Transform can be used for an efficient computation of $\{\w_j\}_{j \in \mathbb{N}_0 }$ (see \cite[Section 7.5]{Podlubny}) . 
Alternatively, a useful recursive expression is given also in \cite[formula (7.23)]{Podlubny}:  
\begin{equation*}
\w_0 = \tau^{-\alpha}, \quad \w_j = \left( 1 - \frac{\alpha + 1}{j}  \right) \w_{j-1}, \quad \forall j>0.
\end{equation*}  
For the numerical experiments we exhibit in Section \ref{sec:numerical} we have taken advantage of this identity.

\section{Error bounds}\label{sec:error}
This section shows error estimates for the numerical scheme discussed in Section \ref{sec:discretization}. The derivation of the error bounds can be carried out following the guidelines from \cite{Jin}. Most of these results can be extrapolated to our case. \jp{Details of the proofs in those cases where the generalization is not direct are provided in Appendix \ref{sec:Err}.}

\subsection{Error bounds for the semi-discrete scheme}
Here we focus only on the \emph{diffusion-wave} case ($1<\alpha<2$), where the generalization of the error bounds becomes more laborious. In this context, the error bounds are given by the generalization of two theorems. The first one, in the spirit of \cite[Theorem 3.2]{Jin}, provides an error estimation for the homogeneous case.

\begin{theorem} \label{teo:semi}
Let $1 < \alpha < 2$, $u$ be the solution of \eqref{eq:parabolic} with $v \in \widetilde H^q(\W)$, $b \in \widetilde H^r(\W)$ $q,r \in [0,2s]$, and $f = 0$; and let $u_h$ be the solution of \eqref{semiDiscreto} with $v_h = P_h v$, $b_h = P_h b$, and $f_h = 0$. Writing $e_h(t) = u(t) - u_h(t)$, there exists a positive constant $C = C(s,n)$ such that 
$$
\|e_h\|_{L^2(\W)} + h^{\gamma}|e_h|_{H^s(\rn)} \leq  C h^{2 \gamma} \left(t^{-\alpha \left( \frac{2s -  q}{2s} \right)} \|v\|_{H^q(\rn)}  +  t^{1-\alpha \left( \frac{2s -  r}{2s} \right)} \|b\|_{H^r(\rn)} \right).
$$
\end{theorem}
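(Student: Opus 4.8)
The plan is to follow the Laplace-transform (resolvent) strategy of \cite{Jin} and adapt it to the diffusion-wave range $1<\alpha<2$. Writing $A := (-\Delta)^s$ and applying the Laplace transform to \eqref{eq:parabolic} with $f=0$, $u(0)=v$, $u'(0)=b$ (recall that for $1<\alpha<2$ the transform of the Caputo derivative is $z^\alpha\hat u - z^{\alpha-1}u(0) - z^{\alpha-2}u'(0)$), and to its semidiscrete counterpart \eqref{semi}, one obtains the representation
\begin{equation*}
u(t) = \frac{1}{2\pi i}\int_\Gamma e^{zt}(z^\alpha + A)^{-1}\left(z^{\alpha-1}v + z^{\alpha-2}b\right)dz,
\end{equation*}
together with the analogous formula for $u_h$ in which $A$, $v$, $b$ are replaced by $A_h$, $v_h=P_h v$, $b_h=P_h b$; here $\Gamma$ is a Hankel-type contour whose opening angle $\delta$ satisfies the condition $\delta>\pi\alpha/2$ of Lemma \ref{lemma:bound_mittag}. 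This is consistent with the modal formula \eqref{eq:sol_modes2}, since the transforms of $E_{\alpha,1}(-\lambda t^\alpha)$ and $tE_{\alpha,2}(-\lambda t^\alpha)$ are $z^{\alpha-1}(z^\alpha+\lambda)^{-1}$ and $z^{\alpha-2}(z^\alpha+\lambda)^{-1}$. Subtracting, $e_h(t)$ becomes a single contour integral of the difference of the two resolvent expressions, so the problem reduces to estimating, uniformly along $\Gamma$, the discrepancy between the continuous and discrete resolvents applied to $v$ and to $b$.

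The spatial rate $h^{2\gamma}$ comes from the elliptic theory. I would introduce the Ritz projection $R_h\colon\widetilde H^s(\W)\to X_h$ defined by $\langle R_h w - w,\chi\rangle_s = 0$ for all $\chi\in X_h$, which satisfies $A_h R_h = P_h A$. For the elliptic problem $Aw=g$ with $g\in L^2(\W)$, Proposition \ref{prop:regHr} gives $w\in H^{s+\gamma}(\W)$ with $|w|_{H^{s+\gamma}(\rn)}\le C\|g\|_{L^2(\W)}$, and a standard interpolation estimate followed by an Aubin--Nitsche duality argument yields
\begin{equation*}
|w - R_h w|_{H^s(\rn)} \le C h^{\gamma}\|g\|_{L^2(\W)}, \qquad \|w - R_h w\|_{L^2(\W)} \le C h^{2\gamma}\|g\|_{L^2(\W)}.
\end{equation*}
These bounds, transplanted to the shifted elliptic problems $(z^\alpha+A)w=g$ and controlled with the help of the sectorial resolvent estimate $\|(z^\alpha+A)^{-1}\|\le C|z|^{-\alpha}$ (equivalently, the bound of Lemma \ref{lemma:bound_mittag} applied modewise), produce the required estimate for the resolvent difference with the factor $h^{2\gamma}$, which matches the combination $\|e_h\|_{L^2}+h^{\gamma}|e_h|_{H^s}$ appearing in the statement.

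To recover the time weights and the dependence on the data regularity $q,r\in[0,2s]$, I would identify the $\widetilde H^q$-norm of the data with $\|A^{q/2s}\cdot\|_{L^2(\W)}$ through the spectral decomposition \eqref{eq:sep_variables} and interpolate between the endpoint cases $q=0$ (data in $L^2$) and $q=2s$ (the smoothest admissible data). Feeding the resolvent estimate into the contour integral and rescaling $z\mapsto t^{-1}\zeta$, the $v$-contribution, which carries the factor $z^{\alpha-1}$, integrates to the weight $t^{-\alpha(2s-q)/2s}$, whereas the $b$-contribution carries the factor $z^{\alpha-2}$, i.e.\ one extra power of $z^{-1}$, and therefore produces one additional power of $t$, giving $t^{1-\alpha(2s-r)/2s}$. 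Summing the two contributions yields the asserted bound.

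The main obstacle is the resolvent analysis in the wave range. Because $1<\alpha<2$, the sector on which the bound of Lemma \ref{lemma:bound_mittag} holds is narrow (the admissible opening angle must exceed $\pi\alpha/2$, which approaches $\pi$ as $\alpha\to2$), and one must check that a contour $\Gamma$ with the required angle can still be deformed from the Bromwich line while keeping the integrals convergent. Moreover, the reduced elliptic regularity of the fractional Laplacian---only $H^{s+\gamma}$ rather than $H^{2s}$, cf.\ Proposition \ref{prop:regHr}---forces the rate $h^{2\gamma}$ in place of $h^{2s}$ and must be tracked carefully through the $|z|$-dependent resolvent bounds so that the powers of $|z|$ match those needed for convergence and for the correct time weights. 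Establishing these estimates uniformly in $z$ along $\Gamma$, and controlling the second initial datum $b$ through the extra $z^{\alpha-2}$ factor, is the laborious step distinguishing this case from the diffusion case $0<\alpha<1$ treated in \cite{Jin}.
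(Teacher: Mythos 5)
Your proposal follows essentially the same route as the paper: the Laplace-transform representation $u(t)=\frac{1}{2\pi i}\int_{\Gamma_{\theta,\delta}}e^{zt}(z^{\alpha}I+A)^{-1}(z^{\alpha-1}v+z^{\alpha-2}b)\,dz$ and its discrete analogue, a resolvent-difference estimate with rate $h^{2\gamma}$ derived from the $H^{s+\gamma}$ elliptic regularity of Proposition \ref{prop:regHr} together with a duality argument, and the rescaling/interpolation machinery of \cite{Jin} to produce the time weights and the dependence on $q,r$. This is precisely what the paper does through Lemma \ref{lemaError} in Appendix \ref{sec:Err}.

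One step, however, is misstated and would fail as written: you require the opening angle of the contour to exceed $\pi\alpha/2$, importing the condition of Lemma \ref{lemma:bound_mittag}. That condition lives in the plane of the Mittag-Leffler argument $-\lambda t^{\alpha}$, not in the Laplace variable $z$. For the sectorial resolvent bound $\|(z^{\alpha}+A)^{-1}\|\le C|z|^{-\alpha}$ one needs $z^{\alpha}$ to remain in a sector of half-angle strictly less than $\pi$, away from the spectrum of $-A$ on the negative real axis; this forces $\pi/2<\theta<\min\{\pi,\pi/\alpha\}$, which is the choice made in Appendix \ref{sec:Err}. With your condition $\theta>\pi\alpha/2$ one gets $\alpha\theta>\pi\alpha^{2}/2\ge\pi$ as soon as $\alpha\ge\sqrt{2}$, so the contour crosses the spectrum and the resolvent estimate breaks down on most of the diffusion-wave range (including $\alpha=1.5$). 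The sector does narrow as $\alpha\to2$, but from above: $\pi/\alpha\to\pi/2$, so $\theta$ is squeezed toward $\pi/2$, not toward $\pi$. With this correction, the remainder of your outline matches the paper's argument.
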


To complete the error estimate for the semi-discrete scheme, it still remains to analyze the case $v = 0, b = 0$ and $f \not = 0$. A proper generalization of  \cite[Theorem 3.2]{errorF} can be carried out following the guidelines outlined in that work.
\begin{theorem}\label{teo:semi_nonhom}
Let $1 < \alpha < 2$, $f \in L^{\infty}(0,T ; L^2(\W) )$, and let $u$ and $u_h$ be the solutions of \eqref{eq:parabolic} and \eqref{semiDiscreto} respectively, with $f_h = P_h f$, and all the initial data equal to zero. Then, there exists a positive constant $C = C(s,n)$ such that
$$
\|u - u_h \|_{L^2(\W)} + h^{\gamma}|u - u_h |_{H^s(\rn)} \leq  C h^{2 \gamma } |\log h|^2 \|f\|_{ L^{\infty}( [0,T] ; L^2(\W) )}.
$$ 
\end{theorem}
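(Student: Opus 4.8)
The plan is to adapt the error analysis for the homogeneous diffusion-wave case (Theorem \ref{teo:semi}) together with a Duhamel-type decomposition, exploiting the solution representation \eqref{eq:sol_modes2} and the elliptic regularity estimate of Proposition \ref{prop:regHr}. Since $v=b=0$, the exact solution is purely the forced response, $u(t) = \sum_k F_k(t) f_k \, \phi_k$, and the semidiscrete solution admits an analogous representation with the discrete fractional Laplacian $A_h$ in place of $\lambda_k$. First I would introduce the Ritz projection $R_h \colon \widetilde H^s(\W) \to X_h$ associated with the inner product $\langle \cdot, \cdot \rangle_s$, and split the error as $u - u_h = (u - R_h u) + (R_h u - u_h) =: \rho + \theta$. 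The term $\rho$ is controlled directly by the best-approximation property in $X_h$ and the elliptic regularity $u(t) \in H^{s+\gamma}(\W)$ guaranteed for the forced problem; this yields the factor $h^{2\gamma}$ in $L^2(\W)$ and $h^\gamma$ in the $H^s(\rn)$-seminorm via a duality (Aubin--Nitsche) argument tailored to the limited regularity $s+\gamma$.

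The more delicate term is $\theta = R_h u - u_h \in X_h$. I would derive the evolution equation it satisfies by testing both the continuous and discrete problems against functions in $X_h$, obtaining $\ppa \theta + A_h \theta = A_h R_h u - P_h A u = -P_h A \rho$ (using $\langle R_h u, w\rangle_s = \langle u, w\rangle_s$ for $w \in X_h$). Thus $\theta$ solves a semidiscrete fractional ODE with zero initial data and forcing $-P_h A\rho$. Using the operator-theoretic solution formula $\theta(t) = \int_0^t (t-r)^{\alpha-1} E_{\alpha,\alpha}(-A_h (t-r)^\alpha)\, (-P_h A\rho)(r)\, dr$, I would apply a resolvent/spectral estimate for the Mittag-Leffler operator $E_{\alpha,\alpha}(-A_h t^\alpha)$ in the $L^2$ operator norm. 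The key quantitative input is the smoothing bound $\|A_h^{\beta} E_{\alpha,\alpha}(-A_h t^\alpha)\| \lesssim t^{-\alpha\beta}$ for $\beta \in [0,1]$, which follows from Lemma \ref{lemma:bound_mittag} applied eigenmode-by-eigenmode to the self-adjoint operator $A_h$, combined with the convergence rate for $\rho$.

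The main obstacle I anticipate is the appearance of the factor $|\log h|^2$, which is not present in the homogeneous estimate and reflects a logarithmic loss under the time integral. This arises because, when one estimates $\int_0^t (t-r)^{\alpha-1}\|A_h E_{\alpha,\alpha}(-A_h(t-r)^\alpha)\| \, dr$ against $\|\rho\|$, the integrand behaves like $(t-r)^{-1}$ near $r = t$ (when $\beta=1$), which integrates to a logarithm; pushing this through while retaining the sharp spatial order $h^{2\gamma}$ requires interpolating between the nonsmoothing case $\beta = 0$ and the fully smoothing case, and carefully tracking the resolvent estimates uniformly in $h$. I would handle this by splitting the time integral at $r = t - \tau_0$ for a suitable threshold, bounding the near-singularity contribution with the $L^\infty(0,T;L^2(\W))$ norm of $f$ (which only requires the nonsmoothing bound) and the regular part with the smoothing estimate, then optimizing to recover the stated $|\log h|^2$ factor. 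The only regularity hypothesis available is $f \in L^\infty(0,T;L^2(\W))$, so throughout one must avoid any demand for temporal smoothness of $f$, relying exclusively on the time-integrated smoothing properties of the Mittag-Leffler operator and the uniform-in-$h$ spectral bounds for $A_h$, following the guidelines of \cite{errorF}.
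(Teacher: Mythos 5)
Your Ritz-projection splitting $u-u_h=\rho+\theta$ contains a genuine error at its central step: the evolution equation you derive for $\theta$ is wrong. The orthogonality $\langle R_h u,w\rangle_s=\langle u,w\rangle_s$ for all $w\in X_h$ says precisely that $A_hR_hu=P_hAu$, so the right-hand side $A_hR_hu-P_hAu$ you propose is identically zero. Carrying out the subtraction of the two weak formulations correctly gives the classical Thom\'ee identity $\ppa\theta+A_h\theta=-P_h\,\ppa\rho$: the forcing is the \emph{Caputo derivative} of the projection error, not $-P_hA\rho$. This is not a cosmetic slip. To bound $\theta$ through the Duhamel formula you would then need $\|\ppa\rho(t)\|_{L^2(\W)}\lesssim h^{2\gamma}\|\ppa u(t)\|_{H^{s+\gamma}(\W)}$, i.e.\ spatial smoothness of $\ppa u$ pointwise in time, which is simply unavailable when the only hypothesis is $f\in L^{\infty}(0,T;L^2(\W))$ with no temporal regularity. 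This is exactly the obstruction that forces the nonsmooth-data literature, and this paper, to abandon the energy/Ritz splitting: the proof here goes through the Laplace-transform representation \eqref{formaIntegral}, writes $u(t)-u_h(t)=\int_0^t\bigl(\bar E(t-r)-\bar E_h(t-r)P_h\bigr)f(r)\,dr$ for the solution operators defined by the contour integral, and inserts the resolvent error estimate of Lemma \ref{lemaError} (the analogue of \cite[Lemma 2.3]{errorF}), which needs only $\|f(r)\|_{L^2(\W)}$.

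Even the part of your argument you regard as routine, the bound for $\rho$, does not close as stated. Estimating $\|\rho(t)\|_{L^2(\W)}\le Ch^{2\gamma}\|u(t)\|_{H^{s+\gamma}(\W)}\le Ch^{2\gamma}\|Au(t)\|_{L^2(\W)}$ via Proposition \ref{prop:regHr} requires $\|Au(t)\|_{L^2(\W)}\lesssim\|f\|_{L^{\infty}(0,T;L^2(\W))}$; but the smoothing bound you quote with $\beta=1$ gives $\|A\,(t-r)^{\alpha-1}E_{\alpha,\alpha}(-A(t-r)^\alpha)\|\le C(t-r)^{-1}$, which is sharp and not integrable at $r=t$, so $\|Au(t)\|_{L^2(\W)}$ is \emph{not} controlled by $\|f\|_{L^{\infty}(0,T;L^2(\W))}$ without a logarithmic loss. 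In other words, the logarithms do not only appear in the $\theta$ term: they already contaminate the elliptic-regularity step for $\rho$, and your sketch of ``split the time integral at $t-\tau_0$ and optimize'' does not explain how to retain the full order $h^{2\gamma}$ while absorbing both losses into $|\log h|^2$. In the paper's route the two logarithms are produced explicitly by the contour estimates (choice of the radius $\delta\sim 1/t$ in $\Gamma_{\theta,\delta}$ and the subsequent time integration), with no appeal to $\ppa u$ or to pointwise bounds on $Au$. To repair your proof you would essentially have to reprove the operator error bound $\|\bar E(t)-\bar E_h(t)P_h\|$, which is the content of Lemma \ref{lemaError} combined with \eqref{formaIntegral}; at that point the Ritz splitting is superfluous.
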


\subsection{Error bounds for the fully-discrete scheme}
Considering all the theory displayed up to this point, error estimates for the fully-discrete scheme can be derived in the same way as in \cite{Jin}. We refer the reader to that work for the details.

\begin{theorem} \label{teo:fully}
Let $u$ be the solution of problem \eqref{eq:parabolic} with $v \in \widetilde H^q(\W)$, $b \in \widetilde H^r(\W)$ $q,r \in [0,2s]$, and $f = 0$; and let $U^n_h$ be the solution of \eqref{full} or \eqref{full_ondas} with $v_h = P_h v$, $b_h = P_h b$, and $F^n_h = 0$. Then, there exists a positive constant $C = C(s,n)$ such that
\begin{itemize}
\item If $0<\alpha<1,$ then
$$
\|u(t_n) - U^n_{h} \|_{L^2(\W)} \leq C \left( t_n^{\alpha \left( \frac{q}{2s} \right) - 1  }\tau + t_n^{-\alpha \left( \frac{2s-  q}{2s} \right)  }  h^{s + \gamma}  \right)\|v\|_{H^q(\rn)}.
$$
\item If $1<\alpha<2,$ then
$$ \begin{aligned}
\|u(t_n) - U^n_{h} \|_{L^2(\W)} & \leq C \left( t_n^{\alpha \left( \frac{q}{2s} \right) - 1 }\tau + t_n^{-\alpha \left( \frac{2s-  q}{2s} \right)  }  h^{s + \gamma }  \right)\|v\|_{H^q(\rn)} \\
& + C \left( t_n^{\alpha \left( \frac{r}{2s} \right)  }\tau + t_n^{1 -\alpha \left( \frac{2s -  r}{2s} \right) }  h^{2 \gamma }  \right)\|b\|_{H^r(\rn)}.
\end{aligned} $$
\end{itemize}
\end{theorem}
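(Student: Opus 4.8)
The plan is to split the total error, at each time level $t_n$, into a spatial (semi-discretization) contribution and a temporal (convolution-quadrature) contribution,
$$u(t_n) - U_h^n = \big(u(t_n) - u_h(t_n)\big) + \big(u_h(t_n) - U_h^n\big),$$
where $u_h$ solves the semi-discrete problem \eqref{semi}. Since $f\equiv 0$, only the initial data $v$ (and $b$, in the diffusion-wave case) drive the solution. The first term is precisely what the semi-discrete error analysis controls: for $1<\alpha<2$ it is Theorem \ref{teo:semi}, and for $0<\alpha<1$ the analogous estimate obtained along the lines of \cite{Jin}. Together these furnish the $h$-powers $h^{s+\gamma}$ and $h^{2\gamma}$ with their accompanying singular-in-time weights, so the whole argument reduces to estimating the purely temporal error $u_h(t_n)-U_h^n$, for which the spatial operator $A_h$ is fixed.

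For the temporal error I would diagonalize in the discrete eigenbasis. Let $\{(\phi_k^h,\lambda_k^h)\}_k$ be the eigenpairs of the self-adjoint positive operator $A_h$ on $X_h$, and expand $u_h(t)=\sum_k u_k^h(t)\,\phi_k^h$ and $U_h^n=\sum_k U_k^{h,n}\,\phi_k^h$. Because both the semi-discrete equation \eqref{semi} and the fully discrete scheme \eqref{full}--\eqref{full_ondas} decouple over these modes, the problem reduces to the scalar fractional ODE $\ppa u_k^h+\lambda_k^h u_k^h=0$ together with its convolution-quadrature discretization $\dtd^\alpha U_k^{h,n}+\lambda_k^h U_k^{h,n}=\dtd^\alpha v_k^h$, augmented by $(\dtd^\alpha t)\,b_k^h$ on the right-hand side in the wave case. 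Thus it suffices to bound the scalar quadrature error for the Mittag-Leffler solution $u_k^h(t)=v_k^h\,E_{\alpha,1}(-\lambda_k^h t^\alpha)$, and its $b$-counterpart $b_k^h\,t\,E_{\alpha,2}(-\lambda_k^h t^\alpha)$.

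The scalar estimate is the heart of the matter and is obtained via the generating-function representation of Lubich. Writing both $u_k^h(t_n)$ and $U_k^{h,n}$ as contour integrals over a sector boundary $\Gamma$ --- the latter with the Laplace variable $z$ replaced by $\delta(e^{-z\tau})/\tau$, where $\delta(\xi)=1-\xi$ encodes the weights $\{\w_j\}$ of \eqref{conv_dis} --- and using the first-order consistency $|\delta(e^{-z\tau})/\tau-z|\le C\tau|z|^2$ on $\Gamma$ together with the scalar bound $|(z^\alpha+\lambda_k^h)^{-1}|\le C|z|^{-\alpha}$, one arrives at the per-mode estimates, for the $v$- and $b$-modes respectively,
$$|u_k^h(t_n)-U_k^{h,n}|\le C\tau\,t_n^{-1}\min\{1,\lambda_k^h t_n^\alpha\}\,|v_k^h|, \qquad |u_k^h(t_n)-U_k^{h,n}|\le C\tau\,\min\{1,\lambda_k^h t_n^\alpha\}\,|b_k^h|,$$
the extra factor $t_n$ in the $b$-mode reflecting the additional power of $t$ in $t E_{\alpha,2}$. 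Squaring, using the elementary interpolation $\min\{1,x\}^2\le x^{q/s}$ valid precisely for $q\in[0,2s]$, and summing over $k$ --- with $\sum_k(\lambda_k^h)^{q/s}|v_k^h|^2\le C\|v\|_{H^q(\rn)}^2$ (a stability property of $P_h$ and $A_h$, and its analogue for $b$ with exponent $r$) --- produces the temporal rates $t_n^{\alpha q/2s-1}\tau\,\|v\|_{H^q(\rn)}$ and $t_n^{\alpha r/2s}\tau\,\|b\|_{H^r(\rn)}$. Combined with the spatial contribution this is exactly the claimed bound; the case $0<\alpha<1$ is identical with $b\equiv0$.

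The main obstacle is the scalar quadrature estimate with its correct dependence on $\lambda_k^h t_n^\alpha$: the structure $\min\{1,\lambda_k^h t_n^\alpha\}$ is what simultaneously accommodates low-regularity data (small $q,r$) and the time singularity as $t_n\to0$, and obtaining it requires deforming and splitting $\Gamma$ (a near part $|z|\lesssim t_n^{-1}$ and a far part) so that the consistency factor $\tau|z|^2$ does not spoil integrability. A subordinate but essential point is uniformity in $h$: the resolvent and sectorial bounds for $A_h$ must hold with constants independent of the mesh, which follows from $A_h$ being self-adjoint and positive definite on $X_h\subset\widetilde H^s(\W)$. Once these uniform scalar estimates are secured, the summation over modes and the recombination of the spatial and temporal parts follow the template of \cite{Jin} verbatim.
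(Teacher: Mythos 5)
Your decomposition $u(t_n)-U_h^n=(u(t_n)-u_h(t_n))+(u_h(t_n)-U_h^n)$, with the first piece handled by the semi-discrete theory and the second by Lubich's generating-function representation of the convolution quadrature, is exactly the route the paper intends: its ``proof'' of Theorem \ref{teo:fully} is a wholesale deferral to \cite{Jin}, with Lemma \ref{lemaError} supplying the fractional-Laplacian substitute for the resolvent-approximation lemma used there. Diagonalizing $A_h$ and arguing mode by mode is an equivalent, if more hands-on, packaging of the operator-valued sectorial estimates (legitimate since $A_h$ is self-adjoint and positive definite uniformly in $h$), and your per-mode bounds with the factor $\min\{1,\lambda_k^h t_n^\alpha\}$ and interpolation exponent $q/(2s)\in[0,1]$ do reproduce the stated powers of $t_n$ and $\tau$. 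One bookkeeping remark: the triangle inequality through Theorem \ref{teo:semi} only delivers $h^{2\gamma}$ for the $v$-contribution in $L^2(\W)$, not the $h^{s+\gamma}$ the theorem asserts (note $s+\gamma>2\gamma$ when $s>1/2$); obtaining $h^{s+\gamma}$ requires a sharper semi-discrete estimate for the $v$-part than the one you invoke.

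The step I would not wave through is the ``stability property'' $\sum_k(\lambda_k^h)^{q/s}\,|(v,\phi_k^h)|^2\le C\|v\|_{H^q(\rn)}^2$ at the endpoint $q=2s$, which amounts to $\|A_hP_hv\|_{L^2(\W)}\le C\|v\|_{H^{2s}(\rn)}$ uniformly in $h$. For the classical Laplacian this follows from $A_hR_h=P_hA$ (with $R_h$ the Ritz projection) combined with the inverse bound $\|A_h\|\le Ch^{-2}$ and the duality estimate $\|(P_h-R_h)v\|_{L^2(\W)}\le Ch^{2}\|Av\|_{L^2(\W)}$, the powers of $h$ cancelling exactly. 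Here the same computation gives
\begin{equation*}
\|A_h(P_h-R_h)v\|_{L^2(\W)}\le Ch^{-2s}\cdot h^{2\gamma}\,\|(-\Delta)^sv\|_{L^2(\W)}=Ch^{2(\gamma-s)}\|(-\Delta)^sv\|_{L^2(\W)},
\end{equation*}
and since $\gamma=\min\{s,1/2-\eps\}<s$ whenever $s>1/2$, the cancellation is lost. This is precisely where the reduced elliptic regularity of the integral fractional Laplacian obstructs a verbatim transfer from \cite{Jin}. To close the gap you must either route the smooth-data temporal estimate through the resolvent identity directly (bounding $\|(z^\alpha+A_h)^{-1}A_hP_hv\|_{L^2(\W)}$ without ever isolating $\|A_hP_hv\|_{L^2(\W)}$, using $\|(z^\alpha+A_h)^{-1}A_h\|\le C$), or measure the smooth endpoint in $\|(-\Delta)^sv\|_{L^2(\W)}$ as the paper's remark following the theorem implicitly does. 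The remainder of your argument, including the splitting of the contour at $|z|\sim t_n^{-1}$ so that the consistency factor $\tau|z|^2$ stays integrable, is sound.
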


\begin{remark} In the previous theorem --and in Theorem \ref{teo:semi} as well -- we wrote the orders of convergence in term of  various Sobolev norms of the data. For clarity, hypotheses in theorems \ref{teo:reg_chico} and \ref{teo:reg_grande} just involved either $L^2$ or $H^s$ norms of the data. For instance, assuming that $v \in \widetilde H ^s(\W)$ is such that $(-\Delta)^s v \in L^2(\W)$ and $b \in \widetilde H ^s(\W)$, the conclusions of Theorem \ref{teo:fully} read
\[ \begin{aligned}
\|u(t_n) - U^n_{h} \|_{L^2(\W)} & \leq C \left( t_n^{\alpha - 1 }\tau +  h^{s + \gamma}  \right)\|(-\Delta)^s v\|_{L^2(\W)} & \text{if } 0 < \alpha < 1,\\
\|u(t_n) - U^n_{h} \|_{L^2(\W)} & \leq C \left( t_n^{\alpha - 1 }\tau +  h^{s + \gamma}  \right)\|(-\Delta)^s v\|_{L^2(\W)} & \\
& + C \left( t_n^{\frac{\alpha}{2} }\tau + t_n^{1 -\frac{\alpha}{2}}  h^{2 \gamma }  \right)|b|_{H^s(\rn)} & \text{if } 1 < \alpha < 2.
\end{aligned} \]
\jp{
We emphasize that, as stated in Remark \ref{remark:mapping}, the identity $\|(-\Delta)^s v\|_{L^2(\W)} \le C | v |_{H^{2s}(\rn)}$ holds for all $v \in \widetilde H^{2s}(\W)$. 
}
\end{remark}

Finally, we state the order of convergence of the fully-discrete scheme for the problems with a non-null source term.
\begin{theorem}
Let $u$ be the solution of \eqref{eq:parabolic} with homogeneous initial data and with $f \in L^{\infty}(0,T;L^2(\W))$; and let $U^n_h$ be  the solution of \eqref{full} or \eqref{full_ondas} with  $f_h = P_h f$. Then, there exists a positive constant $C = C(s,n)$ such that
\begin{itemize}
\item For $0<\alpha<1$, if $\int^t_0 (t-s)^{\alpha-1}\|f'(s)\|_{L^2(\W)}ds < \infty$ for $t \in (0,T]$, then 
$$ \begin{aligned}
\|u(t_n) - U^n_{h} \|_{L^2(\W)}  \leq  C  & \Big(   h^{2 \gamma} \ell^2_h \|f\|_{ L^{\infty}( [0,T] ; L^2(\W) )} + t_n^{\alpha - 1}\tau \|f(0)\|_{L^2(\W)}  \\
& +\tau \int_0^{t_n}(t_n - s)^{\alpha - 1}\|f'(s)\|_{L^2(\W)} \Big).
\end{aligned} $$
\item If $1<\alpha<2,$ then
$$\|u(t_n) - U^n_{h} \|_{L^2(\W)} \leq C ( h^{2 \gamma } \ell^2_h + \tau) \|f\|_{ L^{\infty}( [0,T] ; L^2(\W) )}.$$
\end{itemize}
\end{theorem}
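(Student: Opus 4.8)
The plan is to split the total error by the triangle inequality into a spatial (semidiscretization) part and a temporal (convolution-quadrature) part,
$$
\|u(t_n) - U^n_h\|_{L^2(\W)} \le \|u(t_n) - u_h(t_n)\|_{L^2(\W)} + \|u_h(t_n) - U^n_h\|_{L^2(\W)},
$$
where $u_h$ solves the semidiscrete problem \eqref{semiDiscreto} with $f_h = P_h f$ and homogeneous data. The first term is controlled directly by Theorem \ref{teo:semi_nonhom}, which supplies the contribution $h^{2\gamma}\ell_h^2 \|f\|_{L^\infty(0,T;L^2(\W))}$ (with $\ell_h = |\log h|$) appearing in both regimes. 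Hence the remaining task is to bound the purely temporal error $u_h(t_n) - U^n_h$ uniformly in $h$; this only involves the finite-dimensional, self-adjoint, positive definite operator $A_h$ on $X_h$.

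For the temporal part I would follow the Laplace-transform and generating-function machinery of Lubich and \cite{Jin}. Since $A_h$ is self-adjoint and positive definite it is sectorial: there exist $\theta \in (\pi/2,\pi)$ and $C>0$ with $\|(z I + A_h)^{-1}\| \le C/|z|$ for $|\arg z| \le \theta$. Taking Laplace transforms in \eqref{semiDiscreto} with vanishing initial data yields $\widehat{u_h}(z) = (z^\alpha I + A_h)^{-1}\widehat{f_h}(z)$, so that
$$
u_h(t_n) = \frac{1}{2\pi i}\int_\Gamma e^{z t_n}\,(z^\alpha I + A_h)^{-1}\widehat{f_h}(z)\, dz
$$
on a suitable Hankel contour $\Gamma$. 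The convolution quadrature \eqref{conv_dis} replaces the symbol $z^\alpha$ by $\delta_\tau(e^{-z\tau})^\alpha$, where $\delta_\tau(\xi) = (1-\xi)/\tau$ is the backward-Euler generating function, so $U^n_h$ has an analogous contour representation with $z^\alpha$ replaced by this discrete symbol. The error $u_h(t_n)-U^n_h$ is then governed by the consistency bound $|\delta_\tau(e^{-z\tau})^\alpha - z^\alpha| \le C\tau|z|^{\alpha+1}$ on $\Gamma$, combined with the resolvent estimate above.

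To extract the sharp, data-dependent weights I would decompose the source as $f(t) = f(0) + \int_0^t f'(s)\,ds$ and treat the two parts separately. In the \emph{fractional diffusion} regime $0<\alpha<1$ the scheme \eqref{full} uses the plain loading $F^n_h = P_h f(t_n)$: the constant part $f(0)$ produces the term $t_n^{\alpha-1}\tau\|f(0)\|_{L^2(\W)}$, reflecting the mild singularity of the quadrature error for a source with a jump at $t=0$, while the remainder involving $f'$ yields the convolution $\tau\int_0^{t_n}(t_n-s)^{\alpha-1}\|f'(s)\|_{L^2(\W)}\,ds$, whose finiteness is exactly the integrability hypothesis. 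In the \emph{fractional diffusion-wave} regime $1<\alpha<2$ the stronger time-smoothing of the resolvent permits the cruder uniform bound $C\tau\|f\|_{L^\infty(0,T;L^2(\W))}$; here the corrected loading $G^n_h := \dtd\partial_t^{-1} f_h(t_n)$ introduced after \eqref{full_ondas} is what restores first-order accuracy in $\tau$ without demanding extra regularity of $f$.

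The main obstacle I anticipate is the contour analysis near $t=0$: because $(z^\alpha I + A_h)^{-1}$ decays only like $|z|^{-\alpha}$ as $|z|\to\infty$, a naive bound on the contour integral diverges, and one must deform and rescale the Hankel contour with $t_n$ (radius of order $1/t_n$) to recover the singular weight $t_n^{\alpha-1}$ attached to $\|f(0)\|_{L^2(\W)}$. Keeping every constant independent of $h$ — which is precisely what justifies treating the spatial error separately — hinges on the sectoriality of $A_h$ being uniform in the mesh size; this follows from the coercivity of $\langle\cdot,\cdot\rangle_s$ on $X_h\subset\widetilde{H}^s(\W)$ together with the lower bound $\lambda_1^h\ge\lambda_1>0$ on the discrete eigenvalues.
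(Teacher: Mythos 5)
Your proposal follows essentially the same route as the paper: split the error into a spatial part handled by the semi-discrete estimates (Theorem \ref{teo:semi_nonhom} and Lemma \ref{lemaError}) and a temporal part handled by Lubich's convolution-quadrature contour analysis as in \cite{Jin}, with the source decomposition $f(t)=f(0)+\int_0^t f'$ producing the weights in the $0<\alpha<1$ case and the corrected loading $G^n_h$ restoring first order for $1<\alpha<2$. The only caveat is that Theorem \ref{teo:semi_nonhom} is stated for $1<\alpha<2$, so for $0<\alpha<1$ you should invoke the analogous (simpler) semi-discrete bound from \cite{errorF} rather than that theorem verbatim; otherwise the argument matches the paper's.
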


\section{Numerical experiments} \label{sec:numerical}
\jp{This section exhibits the results of numerical tests for discretizations of problems posed in one- and two-dimensional domains.}
Numerical solutions of \eqref{eq:parabolic} were obtained by applying the scheme described in Section \ref{sec:discretization}. The experiments in two-dimensional geometries were carried out with a code based on the one presented in \cite{ABB}.

\subsection{Explicit Solutions}
In \cite{ABB} it is shown how some families of non-trivial solutions for the fractional Poisson problem can be constructed.
For the sake of brevity, we refer the reader to that work for details. Here we summarize these results in order to be applied to the evolution equation in the cases in which $\Omega$ corresponds to {\emph a)} $(-1,1)\subset \R$ and, more generally, {\emph b)} $B(0,1)\subset \R^n$. 

Define for $n\ge 1$, the function $\w^s:\R^n\to \R,$ 
 $$\w^s(x) = (1 - |x|^2)_+ ^s. $$
 Then,  
 $$u(x):=\w^s(x) g_k^{(s)}(x)$$
 solves 
 $$
 \left\lbrace
  \begin{array}{rl}
      (-\Delta)^su =  f &  \mbox{in }  \Omega, \\
      u  = 0  &  \mbox{in }  \Omega^c,
      \end{array}
\right.
$$
 with $f(x)=\mu_s^k g_k^{(s)}(x)$, where in  case {\emph a)}
 $$
 \mu_s^k=\frac{\Gamma(2s + k + 1)}{k!} \qquad g_k^{(s)}(x):=C_{k}^{(s + 1/2)}(x),
 $$
and  in  case {\emph b)}
 $$
 \mu_s^k=\frac{2^{2s} \, \Gamma(1+s+k) \Gamma\left(\frac n2+s+k\right)}{k! \, \Gamma\left(\frac n2+k\right)} \qquad g_k^{(s)}(x):=P_k^{(s, \, n/2-1)} ( 2 |x|^2 - 1).
 $$
 Above, $C_{k}^{(s + 1/2)}$ and $P_k^{(s, \, n/2-1)}$ denote a Gegenbauer and  a Jacobi polynomial  \cite{Gegen}, respectively.
 
Next, let $h(t)$ be a function such that  $\ppa h(t)$ can be easily computed. By means of separation of variables we can construct explicit solutions of the fractional evolution problem of the form
$$
u(x,t) = h(t) \cdot \w^s(x) g_k^{(s)}(x).
$$ 

\subsection{Orders of convergence}
In order to confirm the predicted convergence rate, we show the results we obtained in three example problems: 
\begin{enumerate}
\item $u(x,t)= E_{\alpha,1}(-t^{\alpha}) \cdot \w^s(x) C^{(s)}_{3} (x) $, $\Omega = (-1,1)$;
\item $u(x,t)= \sin(t) \cdot \w^s(x) C^{(s)}_{3} (x)$, $\Omega = (-1,1)$;
\item $u(x,t)= E_{\alpha,1}(-t^{\alpha})  \cdot \w^s(x) P^{(s,0)}_{k}(2 | x|^2 -1) $, $\Omega = B(0,1) \subset \R^2.$
\end{enumerate}

For examples (a) and (b) we examine the time and spatial convergence over a fixed time $t = 0.1$. A fixed small time step is taking to see the spatial convergence and viceversa.  Our results are summarized in tables \ref{tabla_tiempo}, \ref{tabla_espacio_s075}, \ref{tabla_espacio} and \ref{tabla_2D}.

\begin{table}[ht]
\centering
\begin{tabular}{|c|c|c|c|c|c|c|}
\hline
Example & $\alpha \setminus\tau$ & 0.01     & 0.005    & 0.0025   & 0.001    & Rate \jp{(in $\tau$)}       \\ \hline
(a)     & 0.5              & 4.227e-3 & 2.105e-3 & 1.055e-3 & 4.493e-4 & 0.98 \\ \hline
(a)     & 1.5              & 2.512e-2 & 1.261e-2 & 6.349e-3 & 2.602e-3 & 0.99  \\ \hline
(b)     & 1.5              & 4.867e-3 & 2.402e-3 & 1.188e-3 & 5.362e-4 & 0.96 \\ \hline
\end{tabular}
\caption{The $L^2(\W)$ error at time $t = 0.1$ with $s = 0.75$ using a uniform mesh with size $h = 1/5000$. The expected convergence rate in $\tau$ is $1$. }
\label{tabla_tiempo}
\end{table}

\begin{table}[ht]
\centering
 \begin{tabular}{|c|c|c|c|c|c|c|}
\hline
Example & $\alpha \setminus$ mesh size $h$& $1/250$      & $1/500$     & $1/1000$     & $1/1500$   & Rate \jp{(in $h$)} \\ \hline
(a)     & 0.5                        & 8.837e-3 & 3.856e-3 & 1.781e-3 & 1.198e-3 & 1.12 \\ \hline
(a)     & 1.5                        & 9.978e-3 & 4.350e-3 & 1.967e-3 & 1.252e-3 & 1.16 \\ \hline
(b)     & 0.5                        & 1.162e-3 & 5.158e-4 & 4.010e-3 & 1.640e-4 & 1.09 \\ \hline
(b)     & 1.5                        & 8.453e-4 & 3.644e-4 & 1.632e-4 & 1.035e-4 & 1.17 \\ \hline
\end{tabular} 
\caption{The $L^2(\W)$ error at time $t = 0.1$ with $s = 0.75$ using $\tau = 1/5000$. The expected convergence rate in $h$ is 1.}
\label{tabla_espacio_s075}
\end{table}

\begin{table}[ht]
\centering
\begin{tabular}{|c|c|c|c|c|c|c|}
\hline
Example & $\alpha \setminus$ mesh size $h$ & $1/250$      & $1/500$     & $1/1000$     & $1/1500$   & Rate  \jp{(in $h$)} \\ \hline
(a)     & 0.5                        & 8.571e-2 & 4.999e-2 & 2.924e-2 & 2.139e-2 & 0.77 \\ \hline
(a)     & 1.5                        & 1.125e-1 & 6.596e-2 & 3.859e-2 & 2.818e-2 & 0.77 \\ \hline
(b)     & 0.5                        & 1.171e-2 & 6.845e-3 & 4.010e-3 & 2.937e-3 & 0.77 \\ \hline
(b)     & 1.5                        & 1.154e-2 & 6.811e-3 & 4.014e-3 & 2.943e-3 & 0.76 \\ \hline
\end{tabular}
\caption{The $L^2(\W)$ error at time $t = 0.1$ with $s = 0.25$ using $\tau = 1/5000$. The expected convergence rate in $h$ is 0.5.}
\label{tabla_espacio}
\end{table}

\begin{table}[ht]
\centering
\begin{tabular}{|c|c|c|}
\hline
Mesh size $h$ & $s= 0.25$ &  $s = 0.75$  \\ \hline
0.1    & 1.790e-1 &  5.673e-2    \\ \hline
0.05     & 1.102e-1 & 2.342e-2   \\ \hline
0.03    & 7.077e-2  & 1.054e-2 \\ \hline
0.02    & 5.206e-2  &  6.255e-3   \\ \hline
\end{tabular}
\caption{The $L^2(\W)$ error at time $t = 0.02$ with $s \in  \{0.25, 0.75 \}$ and $\alpha = 0.8$, using $\tau = 1/5000$ for example (c). The observed rates are $0.77$ and $1.38$, respectively. }
\label{tabla_2D}
\end{table}

The experimental orders of convergence (e.o.c.) are in agreement with the theory in the case $s>1/2$ while our numerical examples exhibit e.o.c. (in space) higher than those predicted if $s<1/2$ (see Tables \ref{tabla_espacio} and \ref{tabla_2D}). This behavior seems to be due to the fact that the extra regularity of the data present in our examples can not be exploited in our arguments; the actual solutions are more regular than what is predicted by Theorems \ref{teo:reg_chico} and \ref{teo:reg_grande}.

\subsection{Qualitative aspects} \label{ss:qualitative}
Finally, we present experiments that illustrate some qualitative effects of the fractional derivatives. In Figure \ref{ejemplo_grande_mas}, we fix $s=0.5$,  and show the evolution in time for different values of the parameter $\alpha$, ranging from \emph{fractional diffusion} to \emph{fractional diffusion-wave}. \jp{Memory effects are present for $\alpha = 0.5$, while the solution oscillates for $\alpha = 1.5$.} 

\begin{figure}
\centering
\begin{tabular}{|c|c|c|c|}
\hline
\subf{\includegraphics[width=25mm]{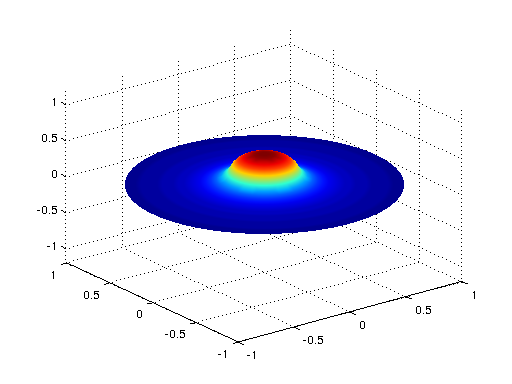}}
     {$\alpha = 0.5$, $t = 0.05$}
&
\subf{\includegraphics[width=25mm]{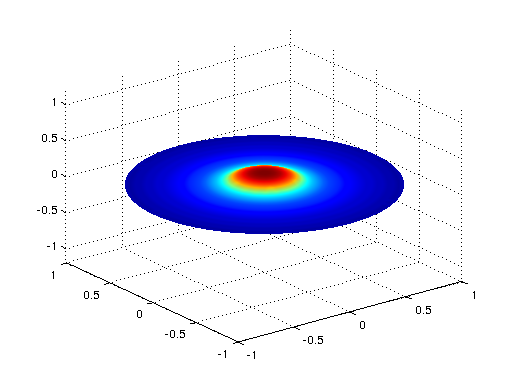}}
     {$\alpha = 0.5$, $t = 0.5$}
&
\subf{\includegraphics[width=25mm]{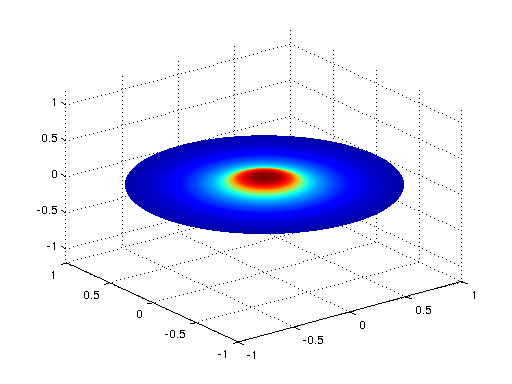}}
     {$\alpha = 0.5$, $t = 1.25$}
&
\subf{\includegraphics[width=25mm]{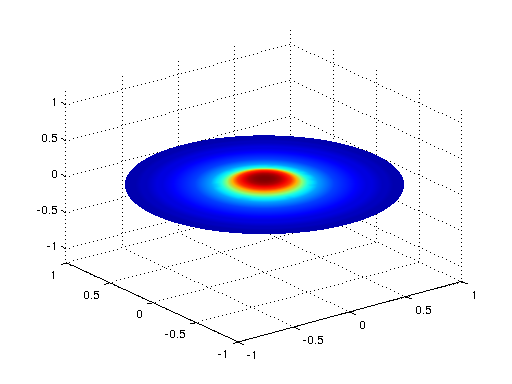}}
     {$\alpha = 0.5$, $t = 2$}
\\
\hline
\subf{\includegraphics[width=25mm]{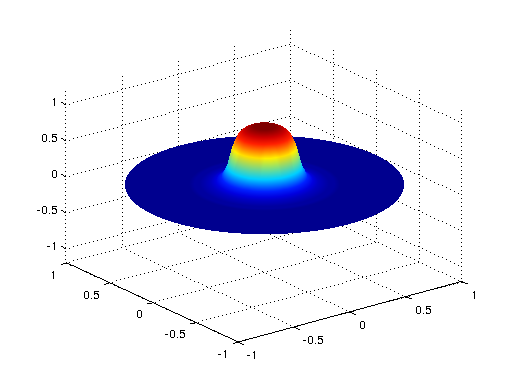}}
     {$\alpha = 1$, $t = 0.05$}
&
\subf{\includegraphics[width=25mm]{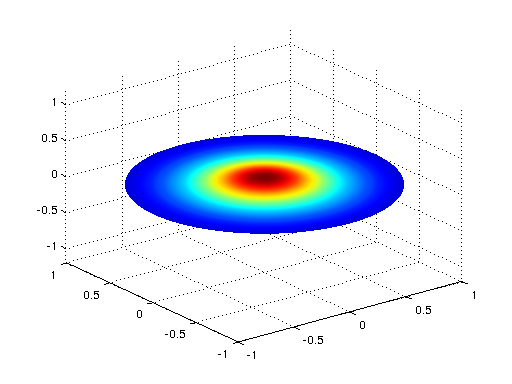}}
     {$\alpha = 1$, $t = 0.5$}
&
\subf{\includegraphics[width=25mm]{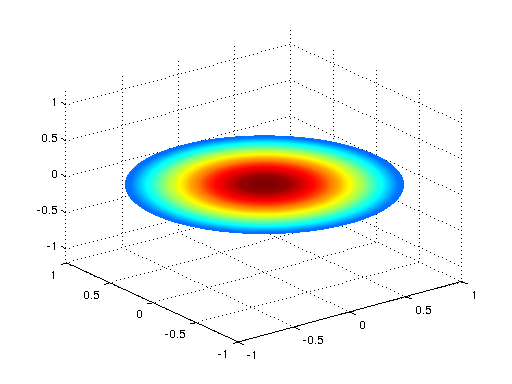}}
     {$\alpha = 1$, $t = 1.25$}
&
\subf{\includegraphics[width=25mm]{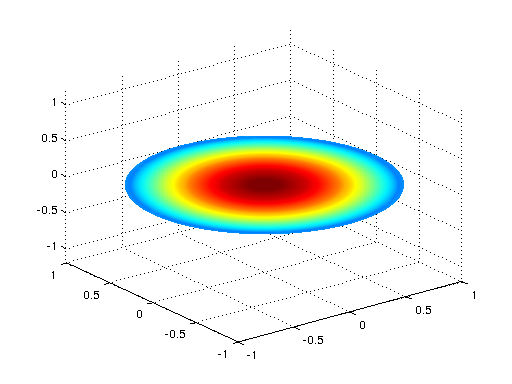}}
     {$\alpha = 1$, $t = 2$}
\\
\hline
\subf{\includegraphics[width=25mm]{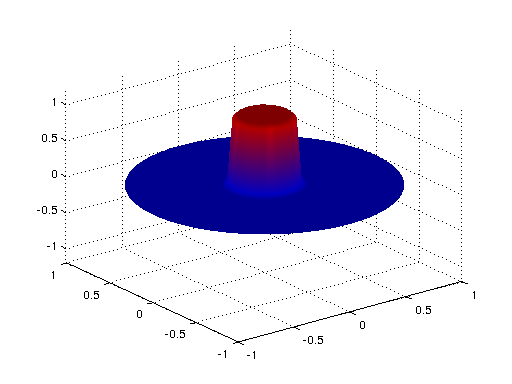}}
     {$\alpha = 1.75$, $t = 0.05$}
&
\subf{\includegraphics[width=25mm]{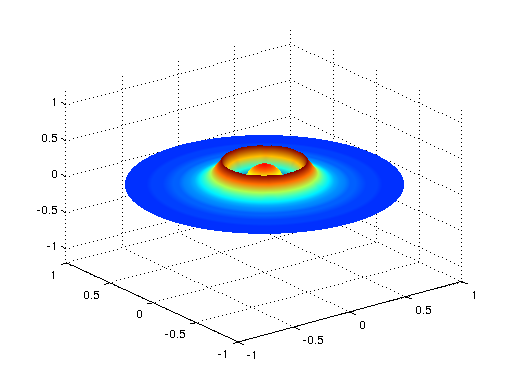}}
      {$\alpha = 1.75$, $t = 0.5$}
&
\subf{\includegraphics[width=25mm]{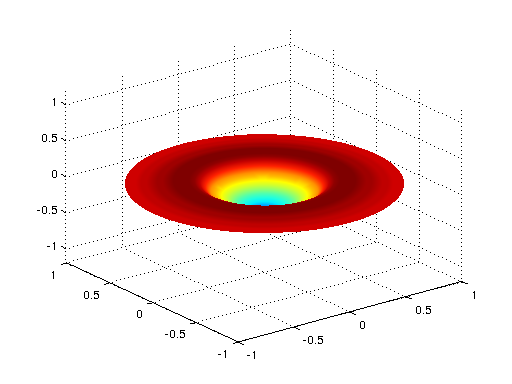}}
      {$\alpha = 1.75$, $t = 1.25$}
&
\subf{\includegraphics[width=25mm]{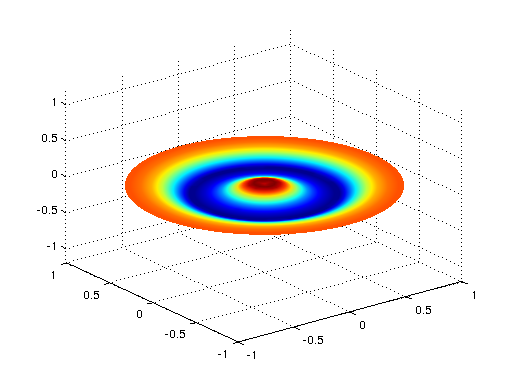}}
      {$\alpha = 1.75$, $t = 2$}
\\
\hline
\end{tabular}
\caption{In this example we set $\Omega = B(0,1)$, $s = 0.5$ and the initial conditions $v(x)=\chi_{B(0,r)}$ with $r=0.275$, and $b \equiv 0$ for $\alpha > 1$. The evolution in time is displayed for several values of $\alpha$.}
\label{ejemplo_grande_mas}
\end{figure}

Figure \ref{ejemplo_grande}, in turn, displays the effect of moving the parameters $\alpha$ and $s$ for a fixed time. \jp{It can be seen that increasing the spatial differentiation order $s$ leads to a faster spreading of the initial condition. Apparent differences can be noticed among the three different problems with $\alpha = 2s$ exhibited along the diagonal of the figure.}

\begin{figure}
\centering
\begin{tabular}{|c|c|c|c|}
\hline
\subf{\includegraphics[width=25mm]{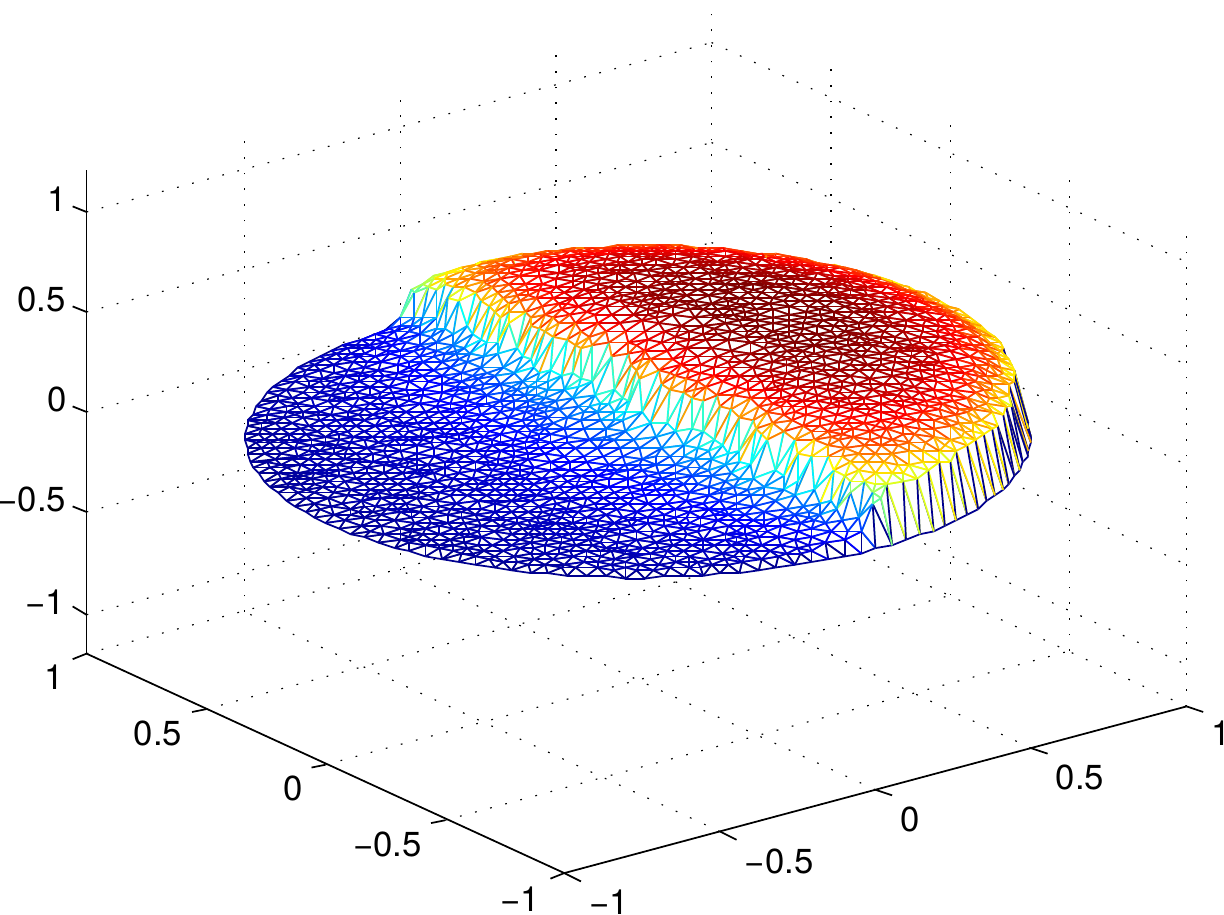}}
     {$\alpha = 0.5$, $s = 0.25$}
&
\subf{\includegraphics[width=25mm]{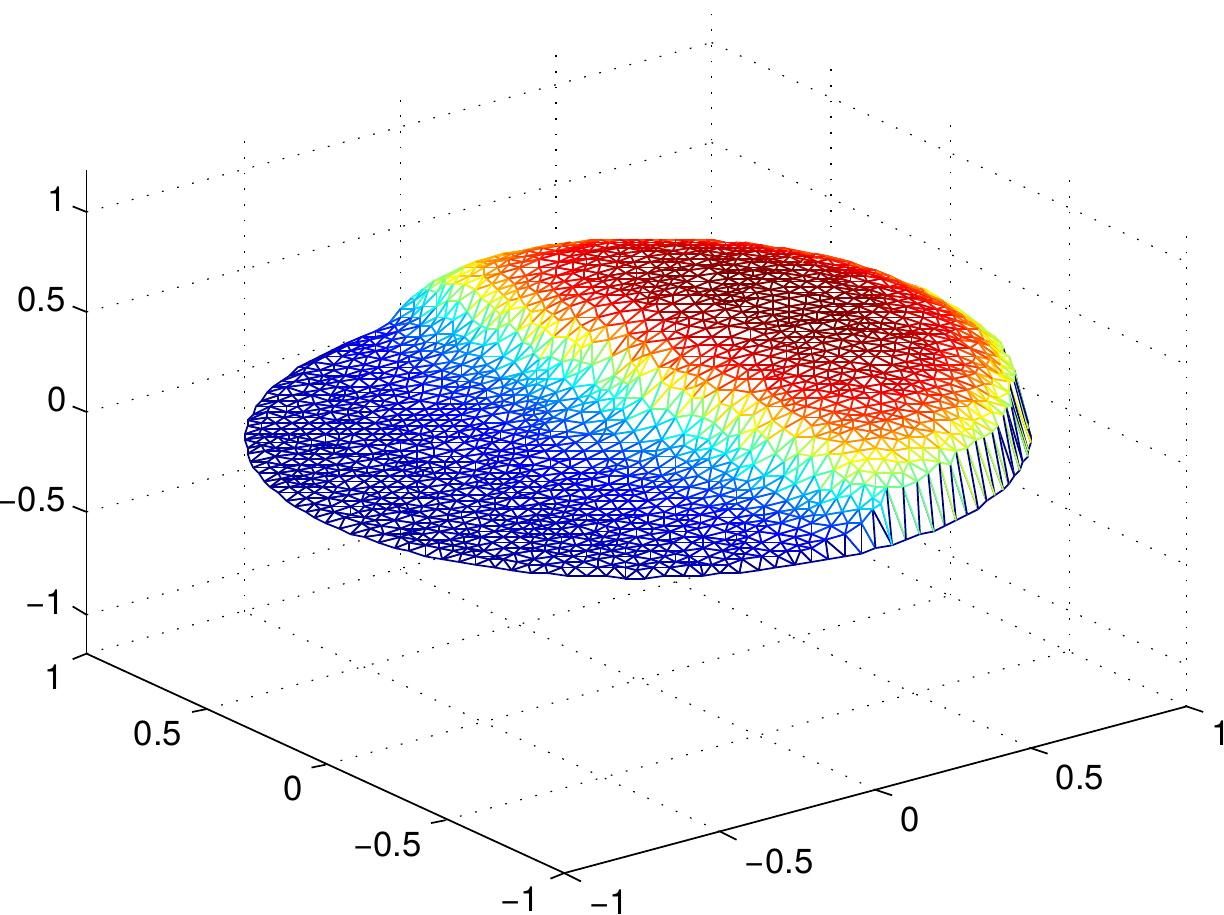}}
     {$\alpha = 1$, $s = 0.25$}
&
\subf{\includegraphics[width=25mm]{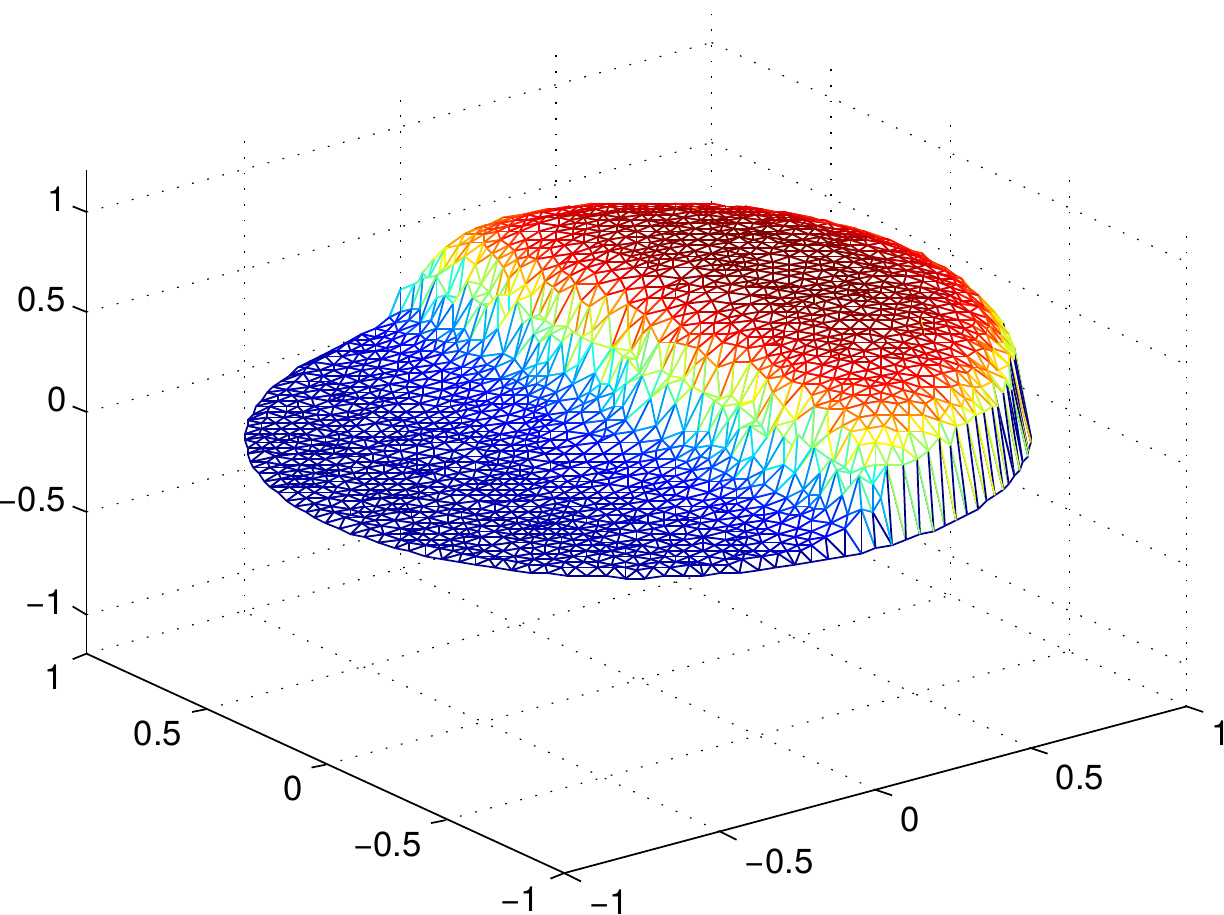}}
     {$\alpha = 1.5$, $s = 0.25$}
%&
%\subf{\includegraphics[width=30mm]{2_025.pdf}}
%     {$\alpha = 2$, $s = 0.25$}
\\
\hline
\subf{\includegraphics[width=25mm]{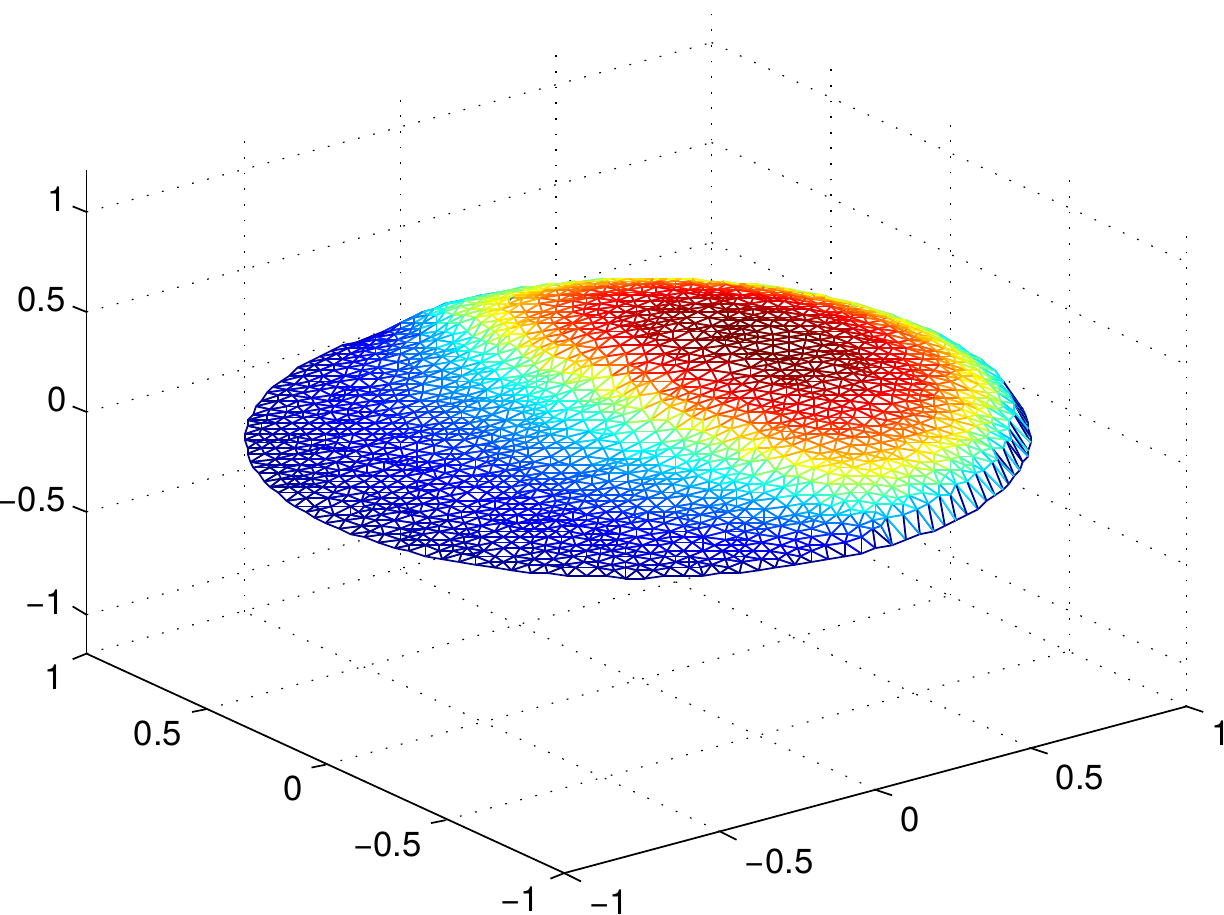}}
     {$\alpha = 0.5$, $s = 0.5$}
&
\subf{\includegraphics[width=25mm]{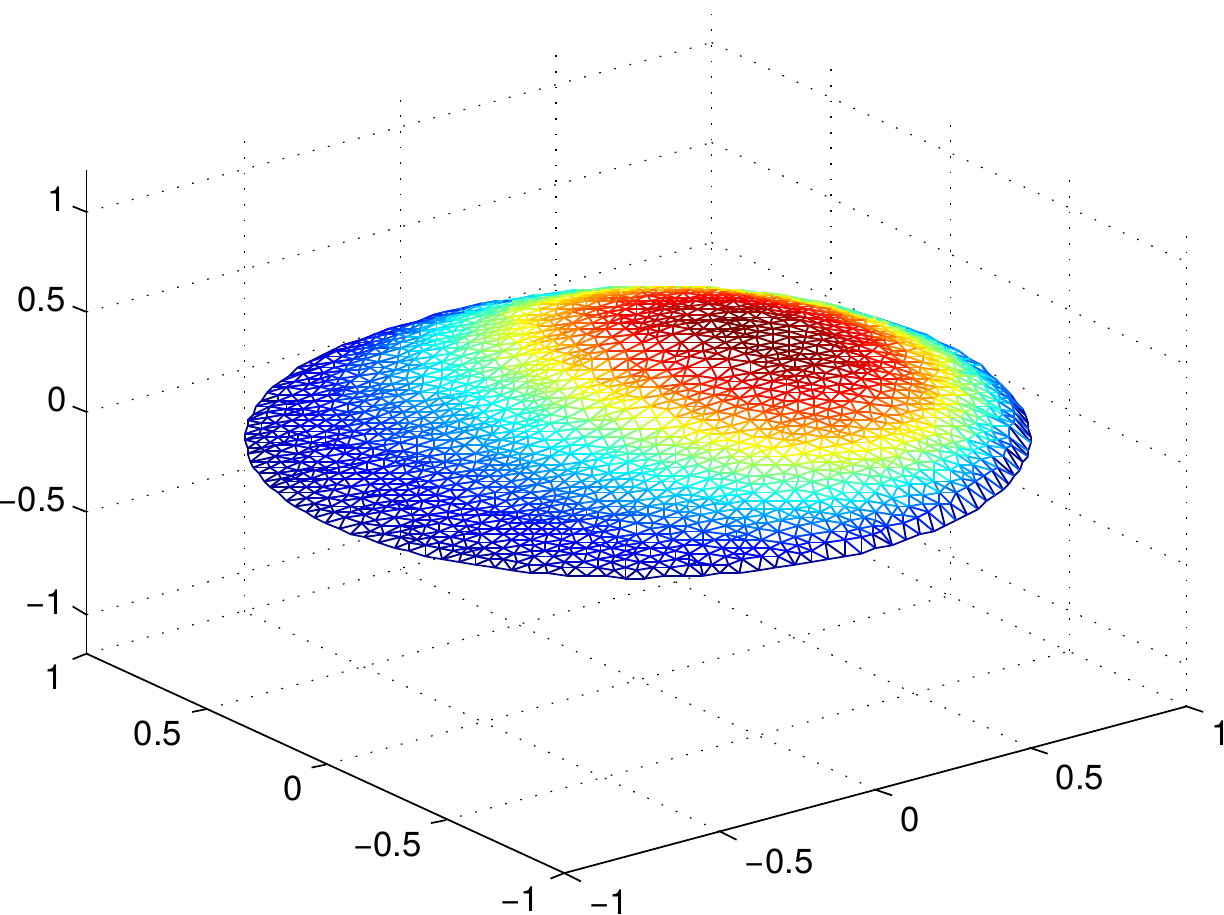}}
     {$\alpha = 1$, $s = 0.5$}
&
\subf{\includegraphics[width=25mm]{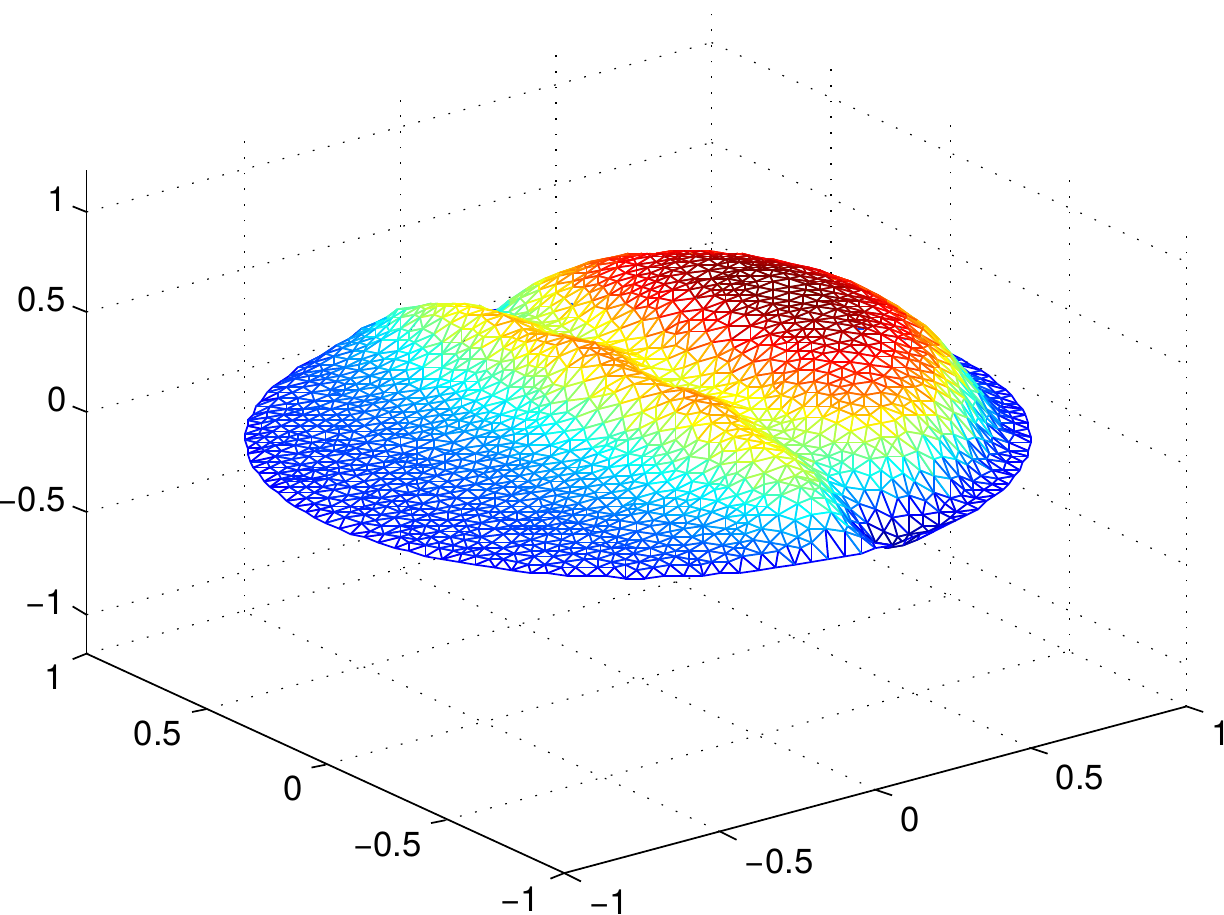}}
     {$\alpha = 1.5$, $s = 0.5$}
%&
%\subf{\includegraphics[width=30mm]{2_05.pdf}}
%     {$\alpha = 2$, $s = 0.5$}
\\
\hline
\subf{\includegraphics[width=25mm]{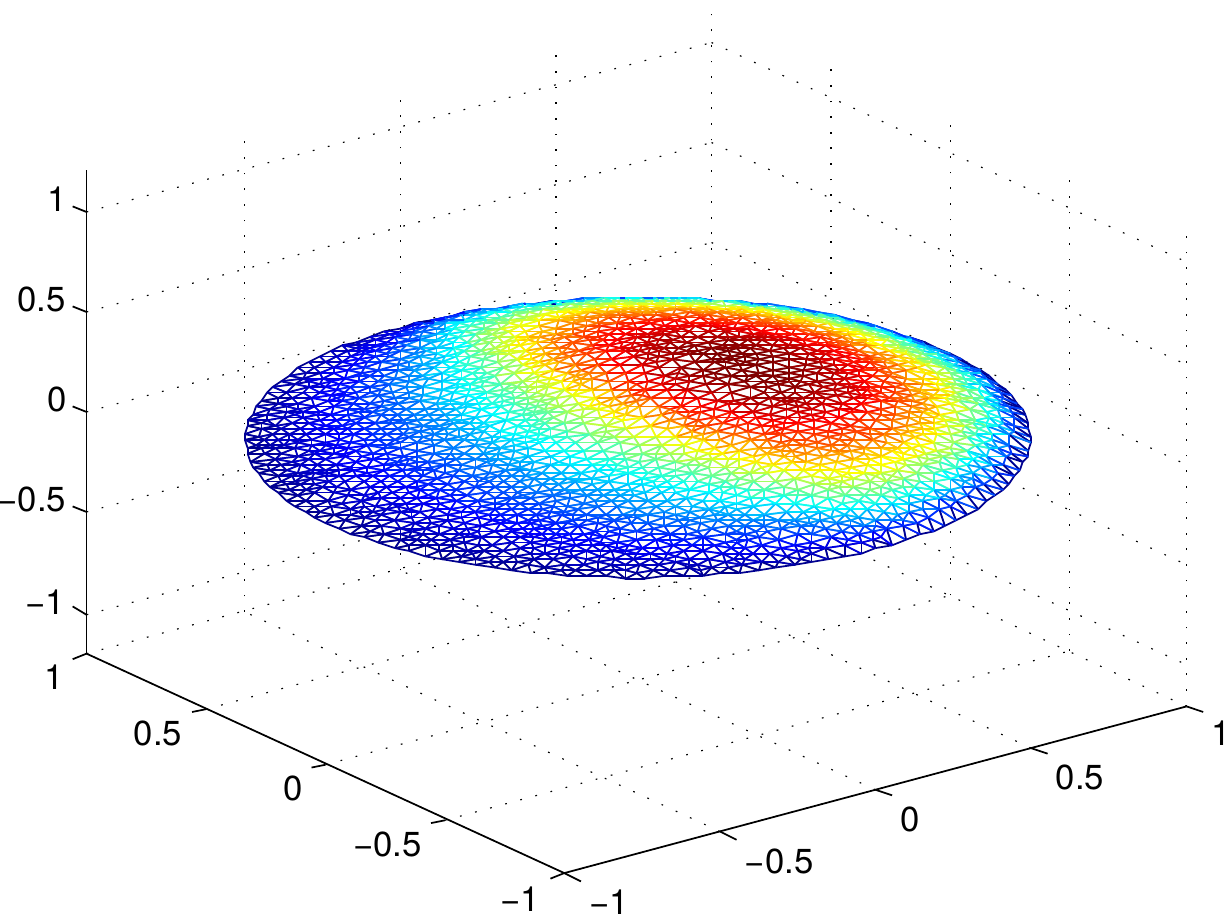}}
     {$\alpha = 0.5$, $s = 0.75$}
&
\subf{\includegraphics[width=25mm]{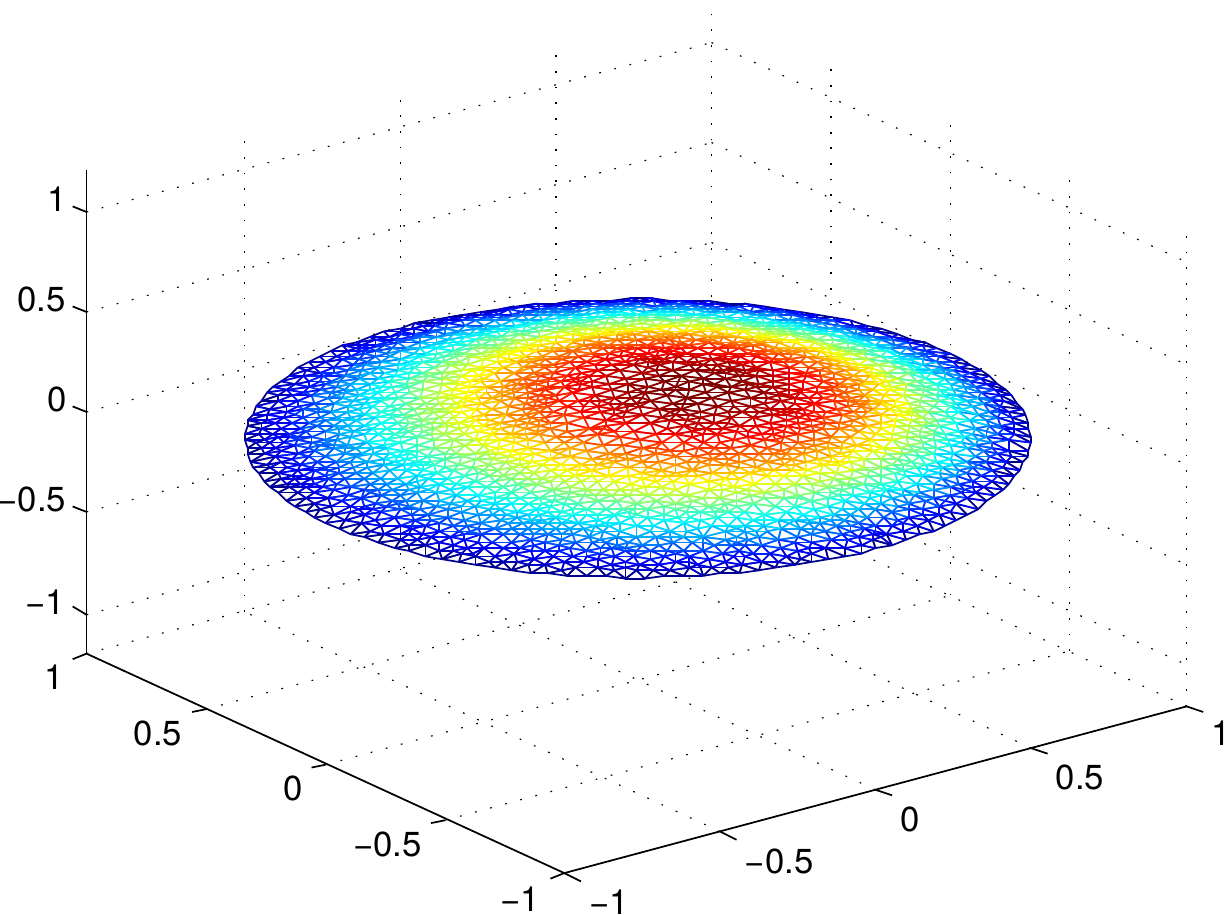}}
      {$\alpha = 1$, $s = 0.75$}
&
\subf{\includegraphics[width=25mm]{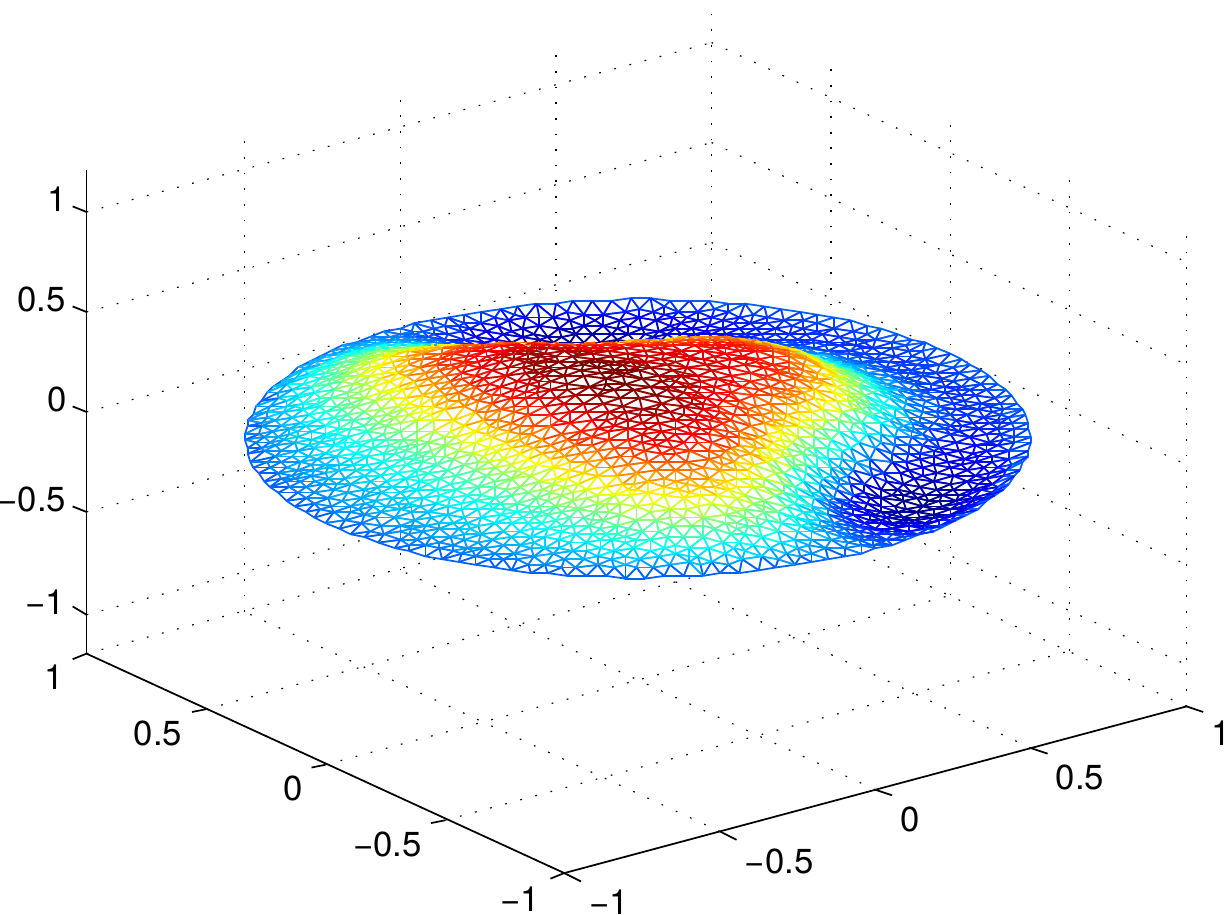}}
      {$\alpha = 1.5$, $s = 0.75$}
%&
%\subf{\includegraphics[width=30mm]{2_075.pdf}}
%      {$\alpha = 2$, $s = 0.75$}
\\
\hline
\end{tabular}
\caption{Effect of considering various $\alpha$ and $s$ at time $t = 0.05$. In this example, $\Omega = B(0,1)$ and  the initial data are $v(x,y)=\chi_{\{y > 0 \}}(x,y)$ and $b \equiv 0$ for $\alpha > 1$.
} \label{ejemplo_grande}
\end{figure}

Our last example, in Figure \ref{ejemplo_delta}, exhibits the persistence of a singularity along the time due to the \emph{memory} induced by the fractional in-time derivative. \jp{In that experiment, we have set $\alpha = 0.99$ and $s = 0.9$. Notice that the solution vanishes to $0$ as time increases, but even though the differentiation parameters are both close to $1$, which corresponds to the classical heat equation, the singular behavior of the initial condition persists in time.}

\begin{figure}
\centering
\begin{tabular}{|c|c|}
\hline
\subf{\includegraphics[width=25mm]{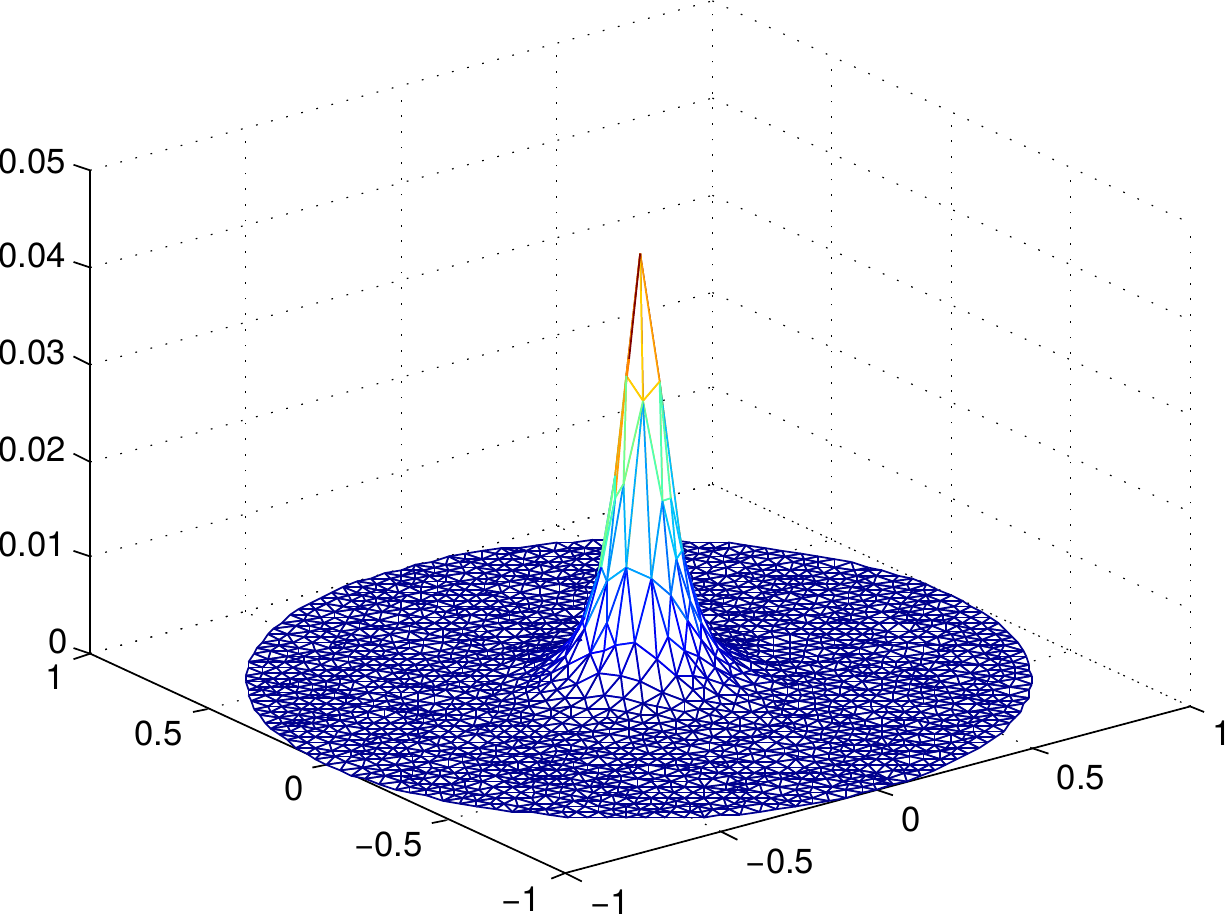}}
     {Solution at $t = 0.015$}
&
\subf{\includegraphics[width=25mm]{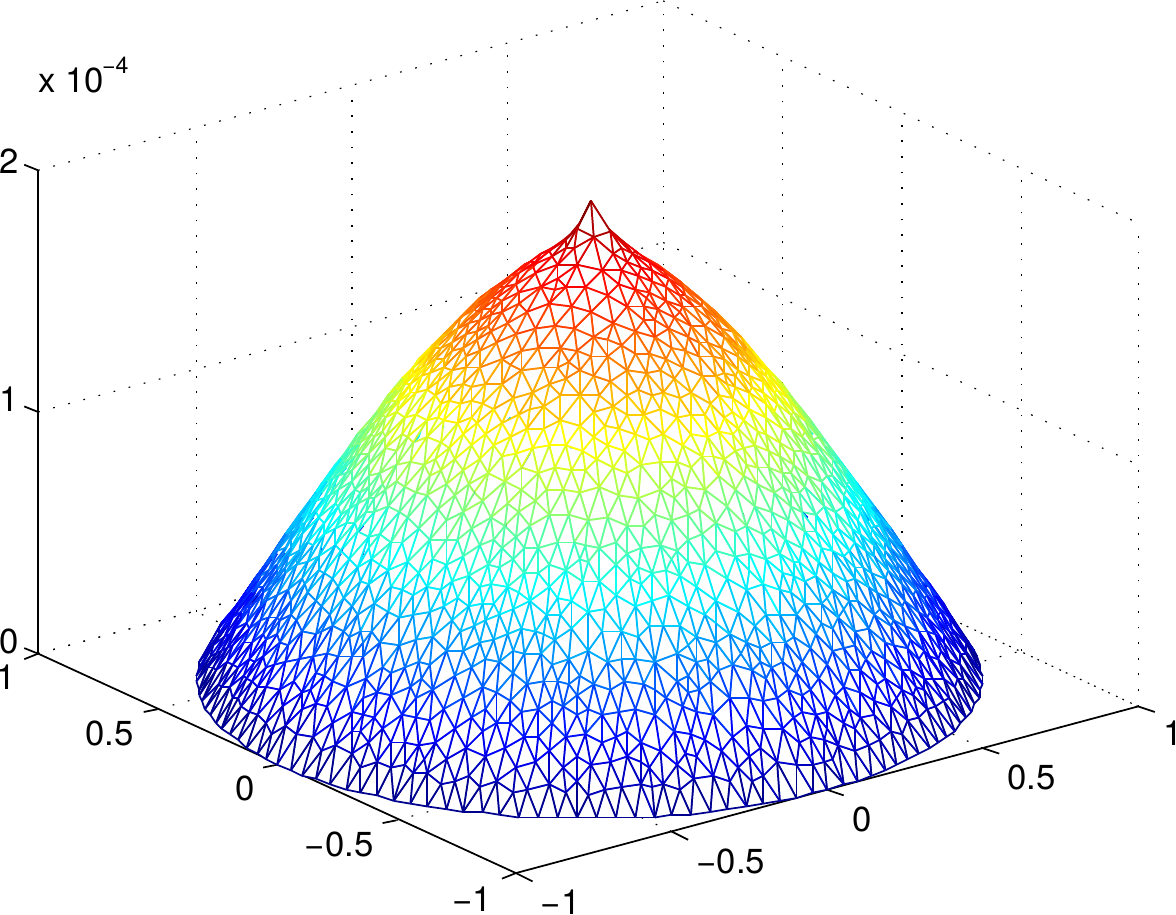}}
     {Solution at $t = 0.55$}
\\
\hline
\subf{\includegraphics[width=25mm]{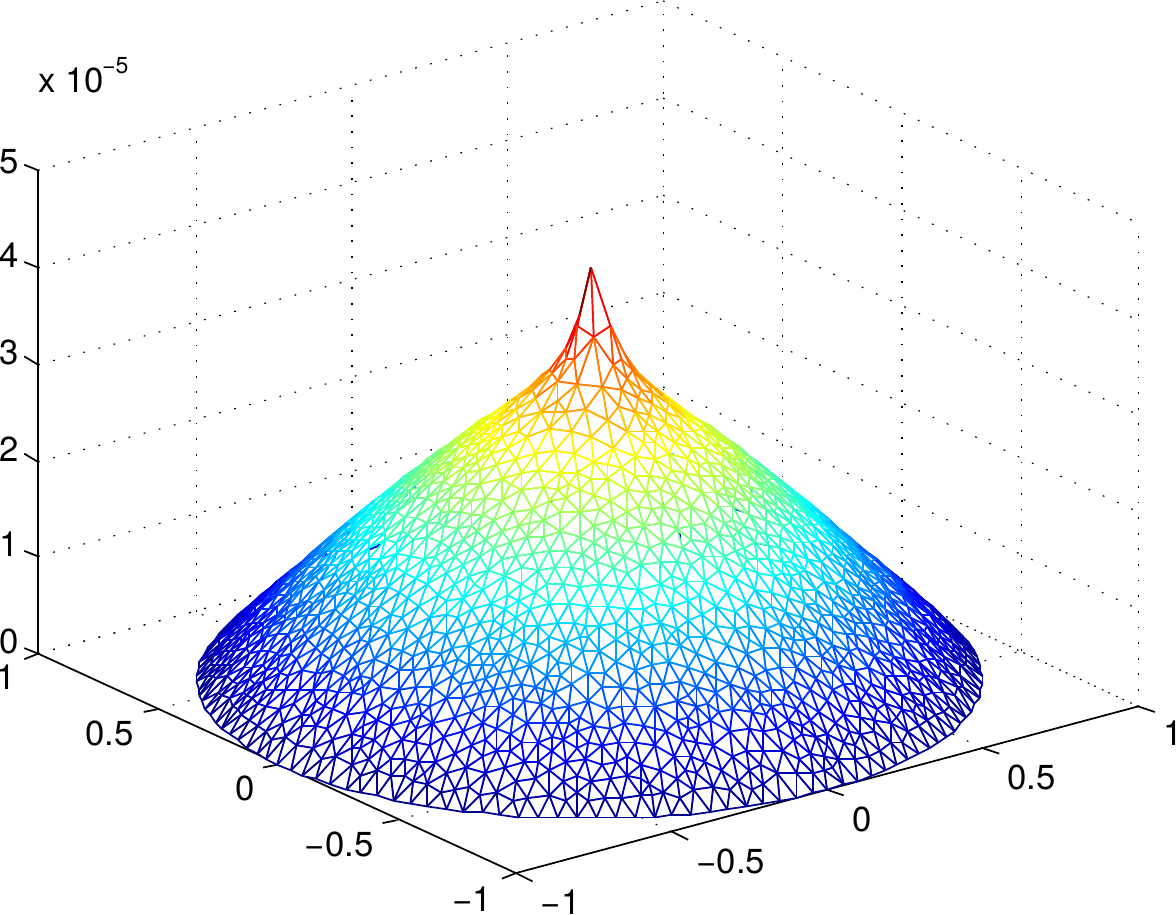}}
     {Solution at $t = 1$}
&
\subf{\includegraphics[width=25mm]{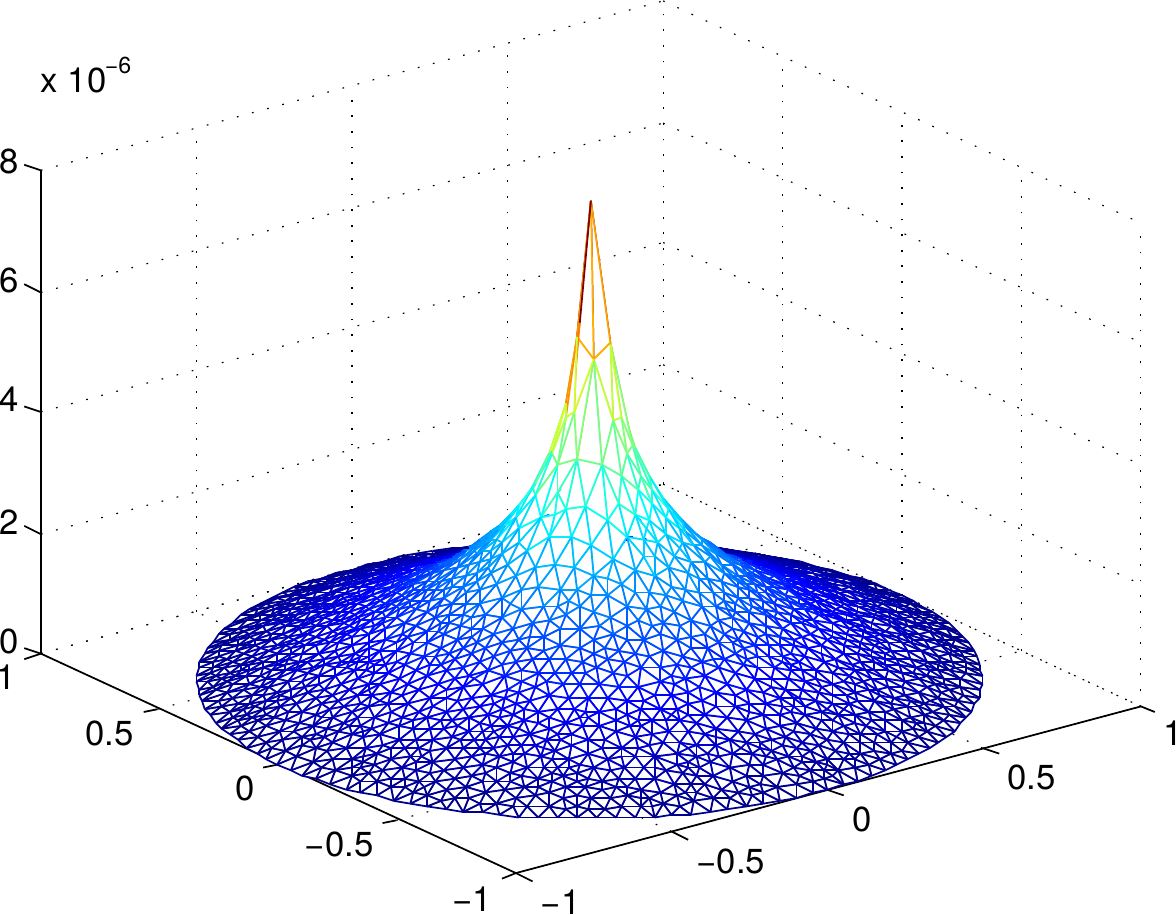}}
     {Solution at $t = 2.5$}
\\
\hline
\end{tabular}
\caption{The effect of a fractional derivative in time: observe the persistence of the singularity along the time, even for $\alpha\sim 1$. In this example we set $\Omega = B(0,1)$,  $\alpha = 0.99$, $s = 0.9$ and $v(0,0)=1$ and $v(x) = 0$ for any other node of the mesh.}
\label{ejemplo_delta}
\end{figure}

\subsection*{Acknowledgments} The authors thank Prof. Bangti Jin for pointing out reference \cite{Jin}.

\appendix

\section{Convolution Quadrature Rule}\label{sec:CQ}
The aim of this appendix is to describe a numerical approximation technique for convolutions that plays an important role in the assemblage of the numerical scheme we propose. %We give an overview of the main ideas and refer the reader to \cite{Lub2, Lub1} for further details.

Dividing $[0,T]$ uniformly with a time step size $\tau = T/N$, and letting $t = n \tau$ ($n \in \{ 1, \ldots, N\}$), we seek for a numerical approximation of the convolution integral
\begin{equation}
k*g(t)=\int^t_{0}k(r)g(t-r) \, dr
\label{conv}
\end{equation}
by means of a finite sum
$
\sum_{j = 0}^n  \omega_j g(t - j\tau).
$
The weights $\{\omega_j\}_{j \in \mathbb{N}_0 }$ are obtained as the coefficients of the power series 
$$
K\left( \frac{\delta(\xi)}{\tau} \right) = \sum^{\infty}_{j=0} \omega_{j}\xi^j ,
$$
where $K$ denotes the Laplace transform of the kernel $k$, and $\delta(\xi)$ is the quotient of the generating polynomials of a linear multistep method.    

To obtain the weights $\omega_j$, suppose that we extend the kernel $k$ by zero over $r \leq 0$ and that for all $r>0$ it satisfes 
\begin{equation}
|k(r)| \leq Cr^{\mu - 1}e^{cr},
\label{condicion_mu_kernel}
\end{equation}
for some $c,\mu > 0$. Then,  the inversion formula
\begin{equation}
k(r) = \frac{1}{2\pi i} \int_{\Gamma}K(z)e^{zr}dz
\label{inversion}
\end{equation}       
holds, where $\Gamma$ is a contour lying in the sector of analyticity of $K$, parallel to its boundary and oriented with an increasing imaginary part. Furthermore, defining $\Sigma_{\theta}:=\{ z \in \mathbb{C} : |\text{arg}(z)| \leq \theta\}$, $\theta \in (\pi/2,\pi)$, it holds that $K$ is analytic in $\Sigma_{\theta}$ and satisfies 
\begin{equation}
|K(z)| \leq C|z|^{-\mu} \quad \forall z \in \Sigma_{\theta}.
\label{condicion_mu}
\end{equation}
This condition is in turn equivalent to \eqref{condicion_mu_kernel}.

Replacing \eqref{inversion} in \eqref{conv} and switching the order of integration gives
\begin{equation}
\int^t_{0}k(r)g(t-r)dr =  \frac{1}{2\pi i} \int_{\Gamma} K(z) \int^{t}_{0}e^{zr}g(t-r)\, dr \, dz. 
\label{conv_inversion}
\end{equation}
Since the inner integral in the right-hand side is the solution of the ordinary differential equation $y' = zy + g$, with $y(0)=0$, we can obtain a numerical estimation by using some multistep method. For simplicity, suppose we utilize Backward Euler discretization (BE), that gives the scheme
$$\frac{y_{n} - y_{n-1}}{\tau} =  zy_n + g_n. $$
Multiplying by $\xi^n$ both sides of the equality, and summing over $n$, we obtain
\begin{equation}
\frac{(1-\xi)}{\tau} \mathbf{y}(\xi) =  z\mathbf{y}(\xi) + \mathbf{g}(\xi),
\label{series}
\end{equation}
where $\mathbf{y}(\xi) :=\sum^{\infty}_{n=0}y_n\xi^n$ and $\mathbf{g}(\xi):=\sum^{\infty}_{n=0}g_n\xi^n$. Defining $\delta(\xi):=(1-\xi)$, from \eqref{series} we deduce
$$
\mathbf{y}(\xi)=\left(\frac{\delta(\xi)}{\tau}-z\right)^{-1}\mathbf{g}(\xi). 
$$
Thus, the numerical approximation of $y$ at time $n\tau$ is given by the $n$-th coefficient of the power series $\left(\frac{\delta(\xi)}{\tau}-z\right)^{-1}\mathbf{g}(\xi)$.

In order to obtain the desired numerical approximation of \eqref{conv} we utilize the former expression,  fix $\xi$ and integrate in $z$ the right hand side in \eqref{conv_inversion}. Using Cauchy's integral formula gives
\begin{equation*}
\frac{1}{2\pi i}\int_{\Gamma}K(z)\left(\frac{\delta(\xi)}{\tau}-z\right)^{-1}\mathbf{g}(\xi) \, dz=K \left(\frac{\delta(\xi)}{\tau}\right) \cdot \mathbf{g}(\xi). 
\end{equation*}
Therefore, the numerical approximation of \eqref{conv} at $t=n\tau$ is given by the $n$-th coefficient of the power series $K \left(\frac{\delta(\xi)}{\tau}\right) \cdot \mathbf{g}(\xi)$. Finally, noticing that the coefficients of the series are the Cauchy product of the sequences $\{\w_n\}_{n \in \mathbb{N}_0}$ and $\{g(n\tau)\}_{n \in \mathbb{N}_0}$, where $\{\w_n\}$ are the coefficients of the power series expansion of $K \left(\frac{\delta(\xi)}{\tau}\right)$, we obtain an expression for the weights in \eqref{conv_dis}.

Given a complex valued function $K$, analytic in $\Sigma_{\theta}$ and satisfying \eqref{condicion_mu}, we use the transfer function notation for \eqref{conv},
\begin{equation*}
K(z)g(t) := k*g(t) = \int^t_{0}k(r)g(t-r)\, dr,
\label{transfer}
\end{equation*}
where $k$ is given by \eqref{inversion}, and the notation
\begin{equation*}
K\left(\frac{\delta(\xi)}{\tau}\right)g(t) := \sum_{j = 0}^n \omega_j g(t - j\tau)
\label{transfer_dis}
\end{equation*}
for the discrete approximation.

Next, we generalize the definition of the Convolution Quadrature Rule to operators that satisfy \eqref{condicion_mu} with a negative value of $\mu$. Indeed, let $m$ be a positive integer such that $\mu + m>0$, setting $\tilde{K}(z) := z^{-m}K(z)$ we define 
\begin{equation*}
K(z)g(t) := \frac{\pp^m}{\pp t^m} \tilde{k}*g(t) =  \frac{\pp^m}{\pp t^m} \int^t_{0}\tilde{k}(r)g(t-r)\, dr,
\label{generalizacion}
\end{equation*}
with $\tilde{k}$ the kernel associated with $\tilde{K}$. All the results and estimates that are achieved in the former case are still true upon this generalization (see \cite[Section 5]{Lub2}). 
This is convenient because we are interested in the particular case of $K(z)=z^{\alpha}$, that delivers
\begin{equation*}
z^{\alpha}g(t) := \frac{\pp^m}{\pp t^m} \int^t_{0}\frac{1}{r^{\alpha - m + 1}}g(t-r) \, dr = \rl g(t).
\end{equation*}
with $m$ a positive integer such that $m-1\le \alpha < m$. Considering this, we set the notation $K(\partial_t):=K(z)$ and $K( \dtd) := K(\delta(\xi)/\tau)$.

\section{Error analysis of the semi-discrete scheme}\label{sec:Err}
%We focus only on the \emph{diffusion-wave} case ($1<\alpha<2$), where the generalization of the error bounds becomes more laborious.  Indeed, w
We first set an integral representation of $u$ for the homogeneous case $f = 0$.
Define the sector $\Sigma_{\theta} := \{ z \in \mathbb{C} \text{ such that } |\arg(u)|<\theta \}$, then $u(t) : [0,T] \rightarrow L^2(\Omega)$ can be analytically extended to $\Sigma_{\pi/2}$ (see \cite[Theorem 2.3]{SakamotoYamamoto}). Applying the Laplace transform in \eqref{eq:parabolic} we obtain
$$
z^{\alpha} \hat{u}(z) + A\hat{u}(z) = z^{\alpha - 1}v +  z^{\alpha - 2}b, 
$$
where $A$ is the fractional Laplacian with homogeneous Dirichlet conditions. Therefore, via the Laplace inversion formula, we write the integral representation
\begin{equation}
\label{formaIntegral}
u(t) = \frac{1}{2\pi i} \int_{\Gamma_{\theta,\delta}} e^{zt}(z^{\alpha}I + A)^{-1}( z^{\alpha - 1}v +  z^{\alpha - 2}b) \, dz,
\end{equation} 
where $\Gamma_{\theta,\delta} = \{ z \in \mathbb{C} : |z| = \delta, |\arg(z)|\leq\theta \} \cup \{ z \in \mathbb{C} : z = re^{\pm i \theta}, r \geq \delta \}$. 

If we choose $\theta$ such that $\pi/2 < \theta < \min \{ \pi,\pi/\alpha \}$, then $z^{\alpha} \in \Sigma_{\theta'}$ with $\theta' = \alpha \theta$ for all $z \in \Sigma_{\theta}.$ Considering $\theta$ in this mode, there exists a constant $C$ only depending on $\theta$ and $\alpha$ such that
\begin{equation*}
\label{semigrupo}
\|(z^{\alpha} + A)^{-1})\|_{L^2(\W)} \leq C|z|^{-\alpha}.
\end{equation*}

As in \eqref{formaIntegral}, we can write an analogous expression for $u_h$,
\begin{equation*}
\label{formaIntegralDiscreta}
u_h(t) = \frac{1}{2\pi i} \int_{\Gamma_{\theta,\delta}} e^{zt}(z^{\alpha}I + A_h)^{-1}( z^{\alpha - 1}v_h +  z^{\alpha - 2}b_h) \, dz.
\end{equation*} 
     
The following technical result can be proved analogously to \jp{\cite[Lemma 7.1]{FujitaSuzuki}.}%\cite[Lemma 3.3]{Bazhlekova}. 
\begin{lemma}
\label{lemaTecnico}
Let $\varphi \in  \widetilde{H}^s(\W)$, and $z \in \Sigma_{\theta}$ with $\pi/2 < \theta < \min \{ \pi,\pi/\alpha \}$. Then there exists a positive constant $c(\theta)$ such that
$$
\left|z^{\alpha}\right| \| \varphi \|_{L^2(\W)}^2 +  | \varphi |_{H^s(\rn)}^2 \leq c\left| z^{\alpha}\| \varphi \|_{L^2(\W)}^2 +  | \varphi |_{H^s(\rn)}^2  \right|.
$$ 
\end{lemma}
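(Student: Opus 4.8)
The plan is to reduce the claimed inequality to an elementary estimate between two nonnegative real numbers that is uniform over the sector $\Sigma_\theta$. Set $a := \|\varphi\|_{L^2(\W)}^2 \ge 0$ and $b := |\varphi|_{H^s(\rn)}^2 \ge 0$; these are the only features of $\varphi$ that enter, so the statement is really a claim about the scalars $a$, $b$ and the complex number $z^\alpha$. Writing $z^\alpha = \rho e^{i\phi}$ with $\rho = |z^\alpha| \ge 0$, the assertion becomes
\[
\rho a + b \le c\,\bigl| \rho e^{i\phi} a + b \bigr|,
\]
and I would expand the modulus on the right as $|z^\alpha a + b|^2 = \rho^2 a^2 + 2\rho a b \cos\phi + b^2$.

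The next step is to control the phase. Since $z \in \Sigma_\theta$ we have $|\arg z| \le \theta$, hence $|\phi| = |\arg z^\alpha| \le \alpha\theta$, and the hypothesis $\theta < \min\{\pi, \pi/\alpha\}$ guarantees $\alpha\theta < \pi$. Because $\cos$ is decreasing on $[0,\pi]$ and $\rho a b \ge 0$, I can bound $2\rho a b \cos\phi \ge 2 \rho a b \cos(\alpha\theta)$, so that
\[
\bigl| z^\alpha a + b \bigr|^2 \ge \rho^2 a^2 + 2\rho a b \cos(\alpha\theta) + b^2.
\]
Thus it suffices to prove the purely real, worst-case inequality $(X + Y)^2 \le c^2 \bigl( X^2 + 2 X Y \cos(\alpha\theta) + Y^2 \bigr)$ for all $X, Y \ge 0$, with $X = \rho a$ and $Y = b$.

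Finally I would determine the optimal constant by homogeneity: normalizing $X + Y = 1$ and writing $X = t$, $Y = 1-t$ with $t \in [0,1]$, the right-hand quadratic form equals $1 - 2t(1-t)\bigl(1 - \cos(\alpha\theta)\bigr)$, which is minimized at $t = 1/2$ with value $\tfrac12\bigl(1 + \cos(\alpha\theta)\bigr)$. Hence the inequality holds with $c^2 = 2/\bigl(1 + \cos(\alpha\theta)\bigr)$, a constant depending only on $\theta$ and $\alpha$. The one delicate point, and the place where the hypothesis is used essentially, is that this constant stays finite precisely because $\alpha\theta < \pi$ forces $1 + \cos(\alpha\theta) > 0$; equivalently, the quadratic form $X^2 + 2XY\cos(\alpha\theta) + Y^2$ is positive definite, and the strict inequality $\theta < \pi/\alpha$ prevents the degeneration that would occur at $\alpha\theta = \pi$. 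The degenerate case $a = b = 0$ is trivial.
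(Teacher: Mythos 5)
Your argument is correct and complete: reducing to the scalar inequality $(X+Y)^2 \le c^2\left(X^2+2XY\cos(\alpha\theta)+Y^2\right)$ for $X,Y\ge 0$, with the phase controlled by $|\arg z^{\alpha}|\le\alpha\theta<\pi$, is exactly the standard sectorial estimate that the paper invokes by citing Fujita--Suzuki, and your explicit constant $c^2=2/\bigl(1+\cos(\alpha\theta)\bigr)$ is sharp. The only cosmetic point is that this constant depends on $\alpha\theta$ rather than on $\theta$ alone, which is harmless since $\alpha$ is fixed throughout (and is consistent with how constants are tracked elsewhere in the paper).
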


The next lemma sets an error estimate between $(z^{\alpha}I + A  )^{-1}f$ and its discrete approximation $(z^{\alpha}I + A_h  )^{-1}P_h f$, analogous to \cite[Lemma 3.4]{Bazhlekova}. 

\begin{lemma}
\label{lemaError}
Let $f \in L^{2}(\W)$, $z \in \Sigma_{\theta}$, $\w := (z^{\alpha}I + A  )^{-1}f$, $\w_h := (z^{\alpha}I + A_h  )^{-1}P_h f$. Then there exists a positive constant $C(s,n,\theta)$ such that
$$
\|\w - \w_h \|_{L^2(\W)} + h^{\gamma}|\w - \w_h |_{H^s(\rn)} \leq   Ch^{2 \gamma }\|f\|_{L^2(\W)}.
$$
As before, $\gamma = \min \{ s, 1/2 - \eps\}$, with $\varepsilon > 0$ arbitrary small. 
\end{lemma}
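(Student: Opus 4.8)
The plan is to recognize $\w_h$ as the Galerkin approximation of $\w$ for the sesquilinear form $B(u,v):=z^\alpha (u,v)+\langle u,v\rangle_s$, and then to run a Céa-type energy estimate for the $H^s$-seminorm followed by a duality (Nitsche) argument for the $L^2$-norm, taking care that every constant stays uniform in $z\in\Sigma_\theta$. First I would observe that, for $v\in X_h$, the identity $(A_h\w_h,v)=\langle\w_h,v\rangle_s$ together with $(P_hf,v)=(f,v)$ turns the definition of $\w_h$ into $B(\w_h,v)=(f,v)$ for all $v\in X_h$; since $B(\w,v)=(f,v)$ holds on all of $\widetilde H^s(\W)$, subtraction yields the Galerkin orthogonality $B(e,v)=0$ for every $v\in X_h$, where $e:=\w-\w_h$. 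Introducing the $z$-dependent energy norm $\|v\|_\star^2:=|z|^\alpha\|v\|_{L^2(\W)}^2+|v|_{H^s(\rn)}^2$, Lemma \ref{lemaTecnico} supplies the coercivity $\|v\|_\star^2\le c\,|B(v,v)|$, while Cauchy--Schwarz gives the continuity $|B(u,v)|\le C\|u\|_\star\|v\|_\star$.

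Before estimating, I would establish that $\w$ (and later the dual solution) has $H^{s+\gamma}$ regularity with a bound uniform in $z$. Testing $B(\w,\w)=(f,\w)$ and using the coercivity yields the resolvent bounds $\|\w\|_{L^2(\W)}\le c|z|^{-\alpha}\|f\|_{L^2(\W)}$ and $|\w|_{H^s(\rn)}\le c|z|^{-\alpha/2}\|f\|_{L^2(\W)}$, recovering the resolvent estimate $\|(z^\alpha I+A)^{-1}\|_{L^2(\W)}\le C|z|^{-\alpha}$ stated above. Then, from $A\w=f-z^\alpha\w$ I obtain $\|A\w\|_{L^2(\W)}\le\|f\|_{L^2(\W)}+|z|^\alpha\|\w\|_{L^2(\W)}\le C\|f\|_{L^2(\W)}$, and Proposition \ref{prop:regHr} with $r=0$ promotes this to $|\w|_{H^{s+\gamma}(\rn)}\le C\|f\|_{L^2(\W)}$, with $C$ independent of $z$.

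For the energy estimate I would combine orthogonality, coercivity and continuity into the quasi-optimality $\|e\|_\star\le C\inf_{\chi\in X_h}\|\w-\chi\|_\star$, and take $\chi=\Pi_h\w$ a quasi-interpolant onto $X_h$ as in \cite{AcostaBorthagaray}. The delicate point, which I expect to be the main obstacle, is keeping the $|z|^\alpha$-weighted $L^2$ contribution bounded uniformly in $z$: the optimal bound $\|\w-\Pi_h\w\|_{L^2(\W)}\le Ch^{s+\gamma}|\w|_{H^{s+\gamma}(\rn)}$ would generate a term $|z|^\alpha h^{2(s+\gamma)}$ that diverges as $|z|\to\infty$. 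Instead I would estimate that term with the lower-order bound $\|\w-\Pi_h\w\|_{L^2(\W)}\le Ch^{s}|\w|_{H^{s}(\rn)}$ and absorb $|z|^\alpha$ against the resolvent decay $|\w|_{H^s(\rn)}^2\le c|z|^{-\alpha}\|f\|_{L^2(\W)}^2$, which leaves $Ch^{2s}\|f\|_{L^2(\W)}^2\le Ch^{2\gamma}\|f\|_{L^2(\W)}^2$ since $\gamma\le s$ and $h\le 1$; for the seminorm part I would use $|\w-\Pi_h\w|_{H^s(\rn)}\le Ch^{\gamma}|\w|_{H^{s+\gamma}(\rn)}$. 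Together these give $\|e\|_\star\le Ch^{\gamma}\|f\|_{L^2(\W)}$ uniformly in $z$, and in particular $|e|_{H^s(\rn)}\le Ch^{\gamma}\|f\|_{L^2(\W)}$.

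Finally, to sharpen the $L^2$-bound from $h^\gamma$ to $h^{2\gamma}$ I would use duality. Let $\psi:=(\bar z^\alpha I+A)^{-1}e$ solve the adjoint problem $B(\phi,\psi)=(\phi,e)$ for all $\phi\in\widetilde H^s(\W)$; since $|\bar z^\alpha|=|z|^\alpha$ and $\bar z\in\Sigma_\theta$, the regularity and approximation bounds of the previous paragraphs apply verbatim to $\psi$, giving $|\psi|_{H^{s+\gamma}(\rn)}\le C\|e\|_{L^2(\W)}$ and hence $\|\psi-\Pi_h\psi\|_\star\le Ch^{\gamma}\|e\|_{L^2(\W)}$. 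Taking $\phi=e$ and invoking Galerkin orthogonality, $\|e\|_{L^2(\W)}^2=B(e,\psi)=B(e,\psi-\Pi_h\psi)\le C\|e\|_\star\,\|\psi-\Pi_h\psi\|_\star\le Ch^{2\gamma}\|f\|_{L^2(\W)}\|e\|_{L^2(\W)}$, whence $\|e\|_{L^2(\W)}\le Ch^{2\gamma}\|f\|_{L^2(\W)}$. Adding $h^\gamma|e|_{H^s(\rn)}\le Ch^{2\gamma}\|f\|_{L^2(\W)}$ from the energy estimate yields the claimed inequality.
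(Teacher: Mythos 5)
Your argument is correct and follows essentially the same route as the paper's proof: Galerkin orthogonality, coercivity in the $z$-weighted energy norm via Lemma \ref{lemaTecnico}, a quasi-interpolation estimate in which the $|z|^{\alpha}$-weighted $L^2$ term is absorbed using the resolvent decay $|\w|_{H^s(\rn)}\le c|z|^{-\alpha/2}\|f\|_{L^2(\W)}$ together with the elliptic regularity of Proposition \ref{prop:regHr} applied to $f-z^\alpha\w$, and finally a duality argument for the $L^2$ bound. The only cosmetic differences are that you package the energy step as a Céa quasi-optimality and test the dual problem against the quasi-interpolant $\Pi_h\psi$, whereas the paper bounds $B(e_h,\w-\Pi_h\w)$ term by term and uses the discrete dual solution $\psi_h$; both yield the same $h^{2\gamma}$ rate.
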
  
\begin{proof} 
We consider first the case $s \geq 1/2$. By definition of $\w$ and $\w_h$, it holds that
$$
\begin{aligned}
z^{\alpha}(\w , \varphi) + \langle \w , \varphi \rangle_s  & = (f , \varphi ),  \quad \forall \varphi \in \widetilde{H}^s(\W),\\
z^{\alpha}(\w_h , \varphi) + \langle \w_h , \varphi \rangle_s  & = (f , \varphi ),  \quad \forall \varphi \in X_h.
\end{aligned}
$$

If we set $e_h := \w - \w_h$ and subtract these two expressions, we derive
\begin{equation}
\label{lema_eq_0}
z^{\alpha}(e_h , \varphi) + \langle e_h , \varphi \rangle_s  = 0,  \quad \forall \varphi \in X_h.
\end{equation}

Applying  Lemma \ref{lemaTecnico} and this identity, we arrive to
$$ \begin{aligned}
\left| z^{\alpha} \right| \|e_h\|^2_{L^2(\W)} + |e_h|^2_{H^s(\rn)} & \leq  c\left| z^{\alpha} (e_h,e_h) + \langle e_h , e_h \rangle_s \right| \\
 & = c\left| z^{\alpha} (e_h,\w -  \varphi) + \langle e_h , \w -  \varphi\rangle_s \right| \quad \forall \varphi \in X_h .
\end{aligned}$$

Taking $\varphi = \Pi_h \w$ in the former expression, where $\Pi_h$ is a suitable quasi-interpolation operator (see, for example, \cite[Section 4.1]{AcostaBorthagaray}), we deduce
\begin{equation}
\label{lema_eq1} \begin{split}
\left| z^{\alpha} \right| & \|e_h\|^2_{L^2(\W)} + |e_h|^2_{H^s(\rn)}  \\
& \leq c \left(   \|e_h\|_{L^2(\W)}h^{1/2 - \varepsilon}|\w|_{H^s(\rn)} + |e_h|_{H^s(\rn)} h^{1/2 - \varepsilon}|\w|_{H^{s+1/2-\eps}(\rn)}  \right),
\end{split}
\end{equation}
where we have used the fact that $s \geq 1/2$ implies  that $h^s \leq h^{1/2 - \varepsilon}$. 

On the other hand, if we choose $\varphi = \w$ in Lemma \ref{lemaTecnico}, we obtain
$$
\begin{aligned}
\left| z^{\alpha} \right| \|\w\|^2_{L^2(\W)} +| \w |^2_{H^s(\rn)} & \leq c\left| z^{\alpha}  (\w , \w) + \langle \w , \w \rangle_s  \right| \\
& = c \left| (f,\w)  \right| \leq c \|f\|_{L^2(\W)}\|\w\|_{L^2(\W)}.\end{aligned}
 $$ 
Consequently, 
\begin{equation}
\label{lema_eq2}
 \|\w\|_{L^2(\W)} \leq c \left| z \right|^{-\alpha} \|f\|_{L^2(\W)}\quad \text{and} \quad  |\w|_{H^s(\rn)} \leq c \left| z \right|^{-\alpha/2} \|f\|_{L^2(\W)}.
\end{equation}

From Proposition \ref{prop:regHr}, we know that for the case $z = 0$ the estimate $|\w|_{H^{s+1/2-\eps}(\rn)} \leq \| f \|_{L^2(\W)}$ holds. Utilizing this estimate with $-z^{\alpha}\w + f$ instead of $f$, we obtain 
$$
|\w|_{H^{s+1/2-\eps}(\rn)} \leq \|-z^{\alpha}\w +  f \|_{L^2(\W)} \leq c\| f \|_{L^2(\W)},
$$
where in the last inequality we used \eqref{lema_eq2}. Combining this with \eqref{lema_eq1}, we derive
\begin{equation*}
\left| z^{\alpha} \right| \|e_h\|^2_{L^2(\W)} + |e_h|^2_{H^s(\rn)} \leq c h^{1/2 - \varepsilon}\| f \|_{L^2(\W)}  \left(   z^{\alpha/2}\|e_h\|_{L^2(\W)} + |e_h|_{H^s(\rn)}  \right).
\end{equation*}
This implies that 
\begin{equation}
\label{lema_eq3}
\left| z^{\alpha} \right| \|e_h\|^2_{L^2(\W)} + |e_h|^2_{H^s(\rn)} \leq c h^{1 - 2\varepsilon}\| f \|^2_{L^2(\W)},
\end{equation}
and gives the bound for $|e_h|_{H^s(\rn)}$. Next, we aim to  estimate $\|e_h\|^2_{L^2(\W)}$. For this purpose, we proceed via the following duality argument. Given $\varphi \in L^2(\W)$, define 
$$
\psi := (z^{\alpha} + A)^{-1}\varphi \text{  and  } \psi_h := (z^{\alpha} + A)^{-1}P_h\varphi. 
$$
Thus, we write
$$
\|e_h\|_{L^2(\W)} = \sup_{ \varphi \in L^2(\W) } \frac{\left|(e_h,\varphi)\right|}{\|\varphi\|_{L^2(\W)}} = \sup_{ \varphi \in L^2(\W) } \frac{\left|z^{\alpha}(e_h,\psi) + \langle e_h,\psi \rangle_s \right|}{\|\varphi\|_{L^2(\W)}}.
$$ 
We aim to bound the supremum in the identity above. Resorting to \eqref{lema_eq_0} and the Cauchy-Schwarz inequality, we bound
$$ 
\begin{aligned}
 \Big|z^{\alpha} & (e_h, \psi) + \langle e_h,\psi \rangle_s \Big|  = \left|z^{\alpha}(e_h,\psi - \psi_h) + \langle e_h,\psi - \psi_h \rangle_s \right| \\
& \leq z^{\alpha/2}\|e_h\|_{L^2(\W)} z^{\alpha/2}\|\psi - \psi_h\|_{L^2(\W)} + |e_h|_{H^s(\rn)} |\psi - \psi_h|_{H^s(\rn)} \\
& \leq \left( z^{\alpha/2}\|e_h\|_{L^2(\W)} + |e_h|_{H^s(\rn)} \right) \left( z^{\alpha/2}\|\psi - \psi_h\|_{L^2(\W)} +|\psi - \psi_h|_{H^s(\rn)} \right) .
\end{aligned} 
$$
Finally, applying \eqref{lema_eq3} we arrive at 
\[
\left|z^{\alpha}(e_h,\psi) + \langle e_h,\psi \rangle_s \right| \le h^{1 - 2\varepsilon}\|f\|_{L^2(\W)}\|\varphi\|_{L^2(\W)},
\]
from where we can derive the desired inequality.

The analysis of the case $s \leq 1/2$ can be carried out in analogously. Indeed, using 
that $h^s \geq h^{1/2 - \eps}$ we obtain, instead of \eqref{lema_eq1}, the inequality
\begin{equation*} \begin{aligned}
\left| z^{\alpha} \right| \|e_h\|^2_{L^2(\W)} + & |e_h|^2_{H^s(\rn)} \\
&  \leq c \left(   \|e_h\|_{L^2(\W)}h^{s }|\w|_{H^s(\rn)} + |e_h|_{H^s(\rn)} h^{s }|\w|_{H^{s+1/2-\eps}(\rn)}  \right),
\end{aligned} \end{equation*}
and proceeding as before we arrive at the desired  estimate. 
\end{proof}

At this point, we are able to give a sketch of the proof of Theorem \ref{teo:semi}.

\begin{proof}[Proof of Theorem \ref{teo:semi}] The proof outlined in \cite[Theorem 3.2]{Jin} can be reproduced using Lemma \ref{lemaError} instead of Lemma 3.1 from that work and approximation properties of the elliptic projection along with the estimate 
$
\| (-\Delta)^s w\|_{L^2(\W)} \le C | w |_{H^{2s}(\rn)}.
$
The details are therefore omitted. 
\end{proof}

%To complete the error estimate for the semi-discrete scheme, it still remains to analyze the case $v = 0, b = 0$ and $f \not = 0$. A proper generalization of  \cite[Theorem 3.2]{errorF} can be carried out following the guidelines outlined in that work.
%

%Finally, with the aim of obtain Theorem \ref{teo:semi_nonhom}, a proper generalization of  \cite[Theorem 3.2]{errorF} can be carried out following the guidelines outlined in that work.  

\jp{Finally, we consider the problem with zero initial conditions and a non-zero source term.}

\begin{proof}[Proof of Theorem \ref{teo:semi_nonhom}]  Using the approximation properties of the elliptic projection together with an appropriate generalization of \cite[Lemma 2.3]{errorF} (in the same spirit of Lemma \ref{lemaError}), the proof given in that work can be adapted to our purpose without major difficulties. 
\end{proof}

\bibliography{bib_parabolico}{}

\begin{thebibliography}{10}

\bibitem{Gegen}
M.~Abramowitz and I.~Stegun.
\newblock {\em Handbook of Mathematical Functions}.
\newblock Dover Publications, 1965.

\bibitem{ABB}
G.~Acosta, F.~Bersetche, and J.P. Borthagaray.
\newblock A short {FEM} implementation for a 2d homogeneous {D}irichlet problem
  of a fractional {L}aplacian.
\newblock {\em Comp. Math. Appl.}, 74(4):784--816, 2017.

\bibitem{AcostaBorthagaray}
G.~Acosta and J.P. Borthagaray.
\newblock A fractional {L}aplace equation: Regularity of solutions and finite
  element approximations.
\newblock {\em SIAM J. Numer. Anal.}, 55(2):472--495, 2017.

\bibitem{ABBM}
G.~Acosta, J.P. Borthagaray, O.~Bruno, and M.~Maas.
\newblock Regularity theory and high order numerical methods for the
  (1{D})-fractional {L}aplacian.
\newblock To appear in Math. Comp., DOI: https://doi.org/10.1090/mcom/3276.

\bibitem{AG2}
M.~Ainsworth and C.~Glusa.
\newblock Aspects of an adaptive finite element method for the fractional
  {L}aplacian: a priori and a posteriori error estimates, efficient
  implementation and multigrid solver.
\newblock {\em Comput. Methods Appl. Mech. Engrg.}, 327:4--35, 2017.

\bibitem{AG}
M.~Ainsworth and C.~Glusa.
\newblock Towards an efficient finite element method for the integral
  fractional {L}aplacian on polygonal domains.
\newblock {\em arXiv:1708.01923v1}, 2017.

\bibitem{Bazhlekova}
E.~Bazhlekova, B.~Jin, R.~Lazarov, and Z.~Zhou.
\newblock An analysis of the {R}ayleigh--{S}tokes problem for a generalized
  second-grade fluid.
\newblock {\em Numer. Math.}, 131(1):1--31, 2015.

\bibitem{BensonWheatcraft}
D.A. Benson, S.W. Wheatcraft, and M.M. Meerschaert.
\newblock Application of a fractional advection-dispersion equation.
\newblock {\em Water Resources Research}, 36(6):1403--1412, 2000.

\bibitem{Berkowitz}
B.~Berkowitz, A.~Cortis, M.~Dentz, and H.~Scher.
\newblock Modeling non-{F}ickian transport in geological formations as a
  continuous time random walk.
\newblock {\em Reviews of Geophysics}, 44(2):n/a--n/a, 2006.
\newblock RG2003.

\bibitem{BonitoLeiPasciak}
A.~Bonito, W.~Lei, and J.~Pasciak.
\newblock Numerical approximation of space-time fractional parabolic equations.
\newblock {\em Comput. Methods Appl. Math.}, 17(4):679--705, 2017.

\bibitem{BLP}
A.~Bonito, W.~Lei, and J.E. Pasciak.
\newblock Numerical approximation of the integral fractional {L}aplacian.
\newblock Preprint, arXiv.

\bibitem{BdPM}
J.P. Borthagaray, L.M. Del~Pezzo, and S.~Mart\'inez.
\newblock Finite element approximation for the fractional eigenvalue problem.
\newblock Preprint, arXiv.

\bibitem{Brockmann}
D.~Brockmann, L.~Hufnagel, and T.~Geisel.
\newblock The scaling laws of human travel.
\newblock {\em Nature}, 439(7075):462--465, 2006.

\bibitem{delCastillo}
D.~del Castillo-Negrete, B.A. Carreras, and V.E. Lynch.
\newblock Fractional diffusion in plasma turbulence.
\newblock {\em Phys. Plasmas}, 11(8):3854--3864, 2004.

\bibitem{delCastillo1}
D.~del Castillo-Negrete, B.A. Carreras, and V.E. Lynch.
\newblock Nondiffusive transport in plasma turbulence: A fractional diffusion
  approach.
\newblock {\em Phys. Rev. Lett.}, 94:065003, 2005.

\bibitem{Diethelm}
K.~Diethelm.
\newblock {\em The analysis of fractional differential equations}, volume 2004
  of {\em Lecture Notes in Mathematics}.
\newblock Springer-Verlag, Berlin, 2010.
\newblock An application-oriented exposition using differential operators of
  Caputo type.

\bibitem{FixRoop}
G.J. Fix and J.P. Roop.
\newblock Least squares finite-element solution of a fractional order two-point
  boundary value problem.
\newblock {\em Comp. Math. Appl.}, 48(7):1017--1033, 2004.

\bibitem{Freed}
A.~Freed, K.~Diethelm, and Y.~Luchko.
\newblock Fractional-order viscoelasticity {(FOV)}: Constitutive development
  using the fractional calculus: First annual report.
\newblock Technical report, NASA's Glenn Research Center, 2002.

\bibitem{FujitaSuzuki}
H.~Fujita and T.~Suzuki.
\newblock Evolution problems.
\newblock {\em Handbook of numerical analysis}, 2:789--928, 1991.

\bibitem{Grubb}
G.~Grubb.
\newblock Fractional {L}aplacians on domains, a development of {H}\"ormander's
  theory of $\mu$-transmission pseudodifferential operators.
\newblock {\em Adv. Math.}, 268:478--528, 2015.

\bibitem{grubb_autovalores}
G.~Grubb.
\newblock Spectral results for mixed problems and fractional elliptic
  operators.
\newblock {\em J. Math. Anal. Appl.}, 421(2):1616--1634, 2015.

\bibitem{Hanert}
E.~Hanert.
\newblock On the numerical solution of space--time fractional diffusion models.
\newblock {\em Comput. \& Fluids}, 46(1):33--39, 2011.
\newblock 10th \{ICFD\} Conference Series on Numerical Methods for Fluid
  Dynamics (ICFD 2010).

\bibitem{HanertPiret}
E.~Hanert and C.~Piret.
\newblock Numerical solution of the space-time fractional diffusion equation:
  Alternatives to finite differences.
\newblock In {\em 5th IFAC Symposium on Fractional Differentiation and Its
  Applications-FDA2012}, 2012.

\bibitem{errorF}
B.~Jin, R.~Lazarov, J.~Pasciak, and Z.~Zhou.
\newblock Error analysis of semidiscrete finite element methods for
  inhomogeneous time-fractional diffusion.
\newblock {\em IMA J. Numer. Anal.}, 35(2):561--582, 2014.

\bibitem{JinLazarovZhou}
B.~Jin, R.~Lazarov, and Z.~Zhou.
\newblock Error estimates for a semidiscrete finite element method for
  fractional order parabolic equations.
\newblock {\em SIAM J. Numer. Anal.}, 51(1):445--466, 2013.

\bibitem{Jin}
B.~Jin, R.~Lazarov, and Z.~Zhou.
\newblock Two fully discrete schemes for fractional diffusion and
  diffusion-wave equations with nonsmooth data.
\newblock {\em SIAM J. Sci. Comput.}, 38(1):A146--A170, 2016.

\bibitem{Karkulik}
M.~Karkulik.
\newblock Variational formulation of time-fractional parabolic equations.
\newblock Preprint, arXiv, 2017.

\bibitem{LinXu}
Y.~Lin and C.~Xu.
\newblock Finite difference/spectral approximations for the time-fractional
  diffusion equation.
\newblock {\em J. Comput. Phys.}, 225(2):1533--1552, 2007.

\bibitem{Lub2}
C.~Lubich.
\newblock Convolution quadrature and discretized operational calculus. {I}.
\newblock {\em Numer. Math.}, 52(2):129--145, 1988.

\bibitem{Lub1}
C.~Lubich.
\newblock Convolution quadrature revisited.
\newblock {\em BIT}, 44(3):503--514, 2004.

\bibitem{Luchko}
Y.~Luchko.
\newblock Fractional wave equation and damped waves.
\newblock {\em J. Math. Phys.}, 54(3):031505, 16, 2013.

\bibitem{Mainardi_fundamental}
F.~Mainardi, Y.~Luchko, G.~Pagnini, and Dedicated To~Rudolf Gorenflo.
\newblock The fundamental solution of the space-time fractional diffusion
  equation.
\newblock {\em Fract. Calc. Appl. Anal}, 4(2):153--192, 2001.

\bibitem{MainardiParadisi}
F.~Mainardi and P.~Paradisi.
\newblock Fractional diffusive waves.
\newblock {\em J. Comput. Acoust.}, 09(04):1417--1436, 2001.

\bibitem{Mainardi}
F.~Mainardi, M.~Raberto, R.~Gorenflo, and E.~Scalas.
\newblock Fractional calculus and continuous-time finance {II}: the
  waiting-time distribution.
\newblock {\em Phys. A}, 287(3--4):468--481, 2000.

\bibitem{McLean}
W.~McLean.
\newblock Regularity of solutions to a time-fractional diffusion equation.
\newblock {\em ANZIAM J.}, 52(2):123--138, 2010.

\bibitem{McLeanThomee}
W.~McLean and V.~Thom{\'e}e.
\newblock Numerical solution via {L}aplace transforms of a fractional order
  evolution equation.
\newblock {\em J. Integral Equations Appl.}, 22(1):57--94, 2010.

\bibitem{Meerschaert}
M.M. Meerschaert and C.~Tadjeran.
\newblock Finite difference approximations for fractional advection--dispersion
  flow equations.
\newblock {\em J. Comput. Appl. Math.}, 172(1):65 -- 77, 2004.

\bibitem{Meerschaert1}
M.M. Meerschaert and C.~Tadjeran.
\newblock Finite difference approximations for two-sided space-fractional
  partial differential equations.
\newblock {\em Appl. Numer. Math.}, 56(1):80 -- 90, 2006.

\bibitem{MontrollWeiss}
E.W. Montroll and G.H. Weiss.
\newblock Random walks on lattices. {II}.
\newblock {\em J. Mathematical Phys.}, 6:167--181, 1965.

\bibitem{MustaphaMcLean}
K.~Mustapha and W.~McLean.
\newblock Superconvergence of a discontinuous {G}alerkin method for fractional
  diffusion and wave equations.
\newblock {\em SIAM J. Numer. Anal.}, 51(1):491--515, 2013.

\bibitem{Nochetto_parabolico}
R.H. Nochetto, E.~Ot\'arola, and A.J. Salgado.
\newblock A {PDE} approach to space-time fractional parabolic problems.
\newblock {\em SIAM J. Numer. Anal.}, 54(2):848--873, 2016.

\bibitem{Pachepsky}
Y.~Pachepsky, D.~Timlin, and W.~Rawls.
\newblock Generalized {R}ichards' equation to simulate water transport in
  unsaturated soils.
\newblock {\em Journal of Hydrology}, 272(1--4):3--s13, 2003.
\newblock Soil Hydrological Properties and Processes and their Variability in
  Space and Time.

\bibitem{Podlubny}
I.~Podlubny.
\newblock {\em Fractional differential equations}, volume 198 of {\em
  Mathematics in Science and Engineering}.
\newblock Academic Press, Inc., San Diego, CA, 1999.
\newblock An introduction to fractional derivatives, fractional differential
  equations, to methods of their solution and some of their applications.

\bibitem{RosOtonSerra2}
X.~Ros-Oton and J.~Serra.
\newblock Local integration by parts and {P}ohozaev identities for higher order
  fractional {L}aplacians.
\newblock {\em Discrete Contin. Dyn. Syst.}, 35(5):2131--2150, 2015.

\bibitem{SakamotoYamamoto}
K.~Sakamoto and M.~Yamamoto.
\newblock Initial value/boundary value problems for fractional diffusion-wave
  equations and applications to some inverse problems.
\newblock {\em J. Math. Anal. Appl.}, 382(1):426--447, 2011.

\bibitem{Sims}
D.~Sims, E.~Southall, N.~Humphries, G.~Hays, C.~Bradshaw, J.~Pitchford,
  A.~James, M.~Ahmed, A.~Brierley, M.~Hindell, D.~Morritt, M.~Musyl,
  D.~Righton, E.~Shepard, V.~Wearmouth, R.~Wilson, M.~Witt, and J.~Metcalfe.
\newblock Scaling laws of marine predator search behaviour.
\newblock {\em Nature}, 451(7182):1098--1102, 2008.

\bibitem{Yang}
Q.~Yang, I.~Turner, F.~Liu, and M.~Ili\'c.
\newblock Novel numerical methods for solving the time-space fractional
  diffusion equation in two dimensions.
\newblock {\em SIAM J. Sci. Comput.}, 33(3):1159--1180, 2011.

\end{thebibliography}
\bibliographystyle{plain}
\end{document}